\documentclass[11pt,reqno]{amsart}

\usepackage[T1]{fontenc}
\usepackage[letterpaper,margin=1in]{geometry}
\usepackage{mathpazo}
\usepackage{microtype}
\usepackage{mathtools}
\usepackage{amssymb}
\usepackage{mathrsfs}
\usepackage{enumitem}
\usepackage{xcolor}
\usepackage[
  colorlinks=true,
  linkcolor=blue!45!black,
  citecolor=green!35!black,
  urlcolor=blue!55!black,
  pdfauthor={Ajneet Dhillon and Brett Nasserden},
  pdftitle={Finite extensions of abelian schemes and duality for commutative group stacks},
  pdfsubject={Duality for proper flat commutative group stacks},
  pdfkeywords={commutative group stack, Picard stack, Cartier duality,
    quasiabelian group scheme, Fourier--Mukai duality, Neron model}
]{hyperref}
\usepackage[nameinlink,noabbrev,capitalise]{cleveref}

\allowdisplaybreaks
\setlist[enumerate,1]{label=\textup{(\roman*)},leftmargin=2.2em}
\setlist[itemize]{leftmargin=2em}

\newtheorem{theorem}{Theorem}[section]
\newtheorem{proposition}[theorem]{Proposition}
\newtheorem{lemma}[theorem]{Lemma}
\newtheorem{corollary}[theorem]{Corollary}

\theoremstyle{definition}
\newtheorem{definition}[theorem]{Definition}
\newtheorem{example}[theorem]{Example}
\theoremstyle{remark}
\newtheorem{remark}[theorem]{Remark}

\newcommand{\cA}{\mathscr A}
\newcommand{\cB}{\mathscr B}
\newcommand{\cC}{\mathscr C}
\newcommand{\cF}{\mathscr F}
\newcommand{\cG}{\mathscr G}
\newcommand{\cK}{\mathscr K}
\newcommand{\sP}{\mathscr P}
\newcommand{\cX}{\mathscr X}

\newcommand{\cE}{{\mathcal E}}
\newcommand{\cH}{{\mathcal H}}
\newcommand{\cL}{{\mathcal L}}
\newcommand{\cM}{{\mathcal M}}
\newcommand{\cN}{{\mathcal N}}
\newcommand{\cO}{{\mathcal O}}
\newcommand{\cP}{{\mathcal P}}
\newcommand{\cQ}{{\mathcal Q}}
\newcommand{\cV}{{\mathcal V}}
\newcommand{\Gm}{\mathbf G_{\mathrm m}}
\newcommand{\BG}[1]{\mathrm B#1}
\newcommand{\HHm}{\mathrm H^{-1}}
\newcommand{\HHz}{\mathrm H^{0}}
\newcommand{\Dual}[1]{\mathbf D(#1)}
\newcommand{\DoubleDual}[1]{\mathbf D\mathbf D(#1)}
\newcommand{\CartierDual}[1]{#1^{\mathrm D}}
\newcommand{\AbDual}[1]{#1^{\vee}}
\newcommand{\ExtSheaf}[2]{\mathop{\cE\!xt}\nolimits^{#1}(#2,\Gm)}
\newcommand{\RHom}{\mathop{\mathbf R\!\operatorname{Hom}}}
\newcommand{\HomStack}{\mathop{\cH\!om}\nolimits}
\newcommand{\Pic}{\operatorname{Pic}}

\newcommand{\Spec}{\operatorname{Spec}}
\newcommand{\Ker}{\operatorname{Ker}}
\newcommand{\Coker}{\operatorname{Coker}}

\newcommand{\id}{\operatorname{id}}
\newcommand{\fppf}{\mathrm{fppf}}
\newcommand{\etale}{\mathrm{\acute et}}
\newcommand{\generic}{\eta}

\newcommand{\Frac}{\operatorname{Frac}}
\newcommand{\Neron}{N\'{e}ron}
\newcommand{\To}{\longrightarrow}

\newcommand{\DQcoh}[1]{\mathbf D\bigl(\operatorname{Qcoh}(#1)\bigr)}
\newcommand{\DbCoh}[1]{\mathbf D^{\mathrm b}\bigl(\operatorname{Coh}(#1)\bigr)}

\DeclareMathOperator{\Isom}{Isom}

\DeclareMathOperator{\NS}{NS}

\title[Finite extensions of abelian schemes and duality]
  {Finite extensions of abelian schemes and duality\\
   for commutative group stacks}

\author{Ajneet Dhillon}
\address{Department of Mathematics, Western University,
London, Ontario, Canada}

\author{Brett Nasserden}
\address{Department of Mathematics and Statistics, McMaster University,
Hamilton, Ontario, Canada}

\date{August 25, 2026}
\subjclass[2020]{14A30, 14K05, 14L15, 14L20}
\keywords{commutative group stack, Picard stack, Cartier duality,
quasiabelian group scheme, Fourier--Mukai duality, N\'{e}ron model}

\begin{document}

\begin{abstract}
Let $R$ be a discrete valuation ring.  We prove that every proper flat
finitely presented commutative $R$-group scheme $P$ fits into an exact
sequence
\[
  0\longrightarrow E\longrightarrow P\longrightarrow B\longrightarrow0,
\]
where $E$ is finite flat and $B$ is an abelian scheme.  For a quasiabelian
model---that is, a proper flat finitely presented $R$-group scheme with
abelian generic fibre---we prove a finer structure theorem: its normalization
is an abelian scheme, and the model is obtained from it by a pushout involving
finite flat group schemes.  An analogous abelian quotient exists for proper
flat commutative group stacks with finite flat inertia.  

These structure theorems give explicit quotient presentations for duals.
  Over an arbitrary base, we
show that the dual of a proper flat finitely presented commutative group
algebraic space is algebraic, proper, flat, and finitely presented, and
biduality holds.  When $2$ is invertible, the corresponding result for group
stacks proves Brochard's duality conjecture.

As an application over a discrete valuation ring, Polishchuk's kernel-algebra
Fourier--Mukai argument gives equivalences between the unbounded derived
categories of quasi-coherent sheaves on every proper flat finitely presented
commutative group scheme and its dual, and between their bounded derived
categories of coherent sheaves, with no tameness or residue-characteristic
hypothesis.  For a quasiabelian model, the dual category is a finite-flat
equivariant category on its abelian normalization.
\end{abstract}

\maketitle

\section{Introduction}

Let $S$ be a scheme and let $\cG$ be a strictly commutative group stack over
$S$.  Following Deligne and Brochard, its dual is
\[
  \Dual{\cG}
  :=\HomStack(\cG,\BG{\Gm}).
\]
There is a canonical evaluation homomorphism
\[
  e_{\cG}\colon \cG\longrightarrow \DoubleDual{\cG}.
\]
Brochard conjectured that, when $2$ is invertible on $S$, the dual of a proper
flat finitely presented commutative group stack with finite flat inertia is
again proper and flat, and that every such stack is dualizable
\cite[Conjecture~1.1]{BrochardDuality}.    More precisely, the conjecture
is the following.

\begin{theorem}[Brochard's conjecture]\label{thm:intro-brochard}
Let $S$ be a scheme on which $2$ is invertible, and let $\cG$ be a proper,
flat, finitely presented algebraic commutative group stack over $S$ whose
inertia is finite and flat over $\cG$.  Then:
\begin{enumerate}
  \item $\Dual{\cG}$ is algebraic, proper, flat, and finitely presented over
  $S$, and its diagonal is finite;
  \item the evaluation morphism
  $e_{\cG}\colon\cG\To\DoubleDual{\cG}$ is an isomorphism.
\end{enumerate}
\end{theorem}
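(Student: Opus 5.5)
We reduce to the structure theory of the two "homotopy sheaves" of $\cG$. Write $\HHm(\cG)$ for the inertia group (finite flat over $S$, since the inertia is finite flat over $\cG$ and $\cG$ is flat and has a section) and $\HHz(\cG)$ for the sheaf of isomorphism classes. The latter is a proper flat finitely presented commutative group algebraic space, because $\cG\to\HHz(\cG)$ is a gerbe banded by the finite flat group $\HHm(\cG)$. So $\cG$ sits in an extension of Picard stacks
\[
  \BG{\HHm(\cG)}\To\cG\To\HHz(\cG).
\]
Applying $\Dual{-}=\HomStack(-,\BG{\Gm})$ gives a triangle relating $\Dual{\HHz(\cG)}$, $\Dual{\cG}$ and $\Dual{\BG{\HHm(\cG)}}$. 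By Cartier duality, $\Dual{\BG{H}}\simeq \CartierDual{H}$ for finite flat $H$. Moreover, the obstruction term $\ExtSheaf{2}{\HHz(\cG)}$ (which controls whether the sequence of duals is right exact) should vanish, or at least be controlled, after the reduction below.

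\textbf{Step 1: the group algebraic space case.} First prove the statement when $\cG=P$ is a proper flat finitely presented commutative group algebraic space. Here $\Dual{P}$ has $\HHm=\CartierDual{(\text{component data})}$ and $\HHz=\ExtSheaf{1}{P}$. Work fppf-locally. The claimed structure theorem gives $0\to E\to P\to B\to 0$ with $E$ finite flat and $B$ an abelian scheme. Over a DVR this is the result proved in the paper; over a general base I would spread it out by noetherian approximation and use the fibrewise version. The dual of an abelian scheme is $\BG{}$-free: $\Dual{B}\simeq \AbDual{B}$, since $\HomStack(B,\Gm)=0$ and $\ExtSheaf{1}{B}=\AbDual{B}$. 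Hence the long exact sequence gives
\[
  0\to \HomStack(P,\Gm)\to\CartierDual{E}\to\AbDual{B}\to\ExtSheaf{1}{P}\to\ExtSheaf{1}{E}=0,
\]
using that $\ExtSheaf{1}{E}=0$ fppf-locally for finite flat $E$. Thus $\Dual{P}$ is the quotient stack $[\AbDual{B}/\CartierDual{E}]$, with $\CartierDual{E}$ acting through a homomorphism. This is algebraic, proper and flat, and its diagonal is finite because $\CartierDual{E}$ is finite. Biduality then follows by dualizing again: the dual of $[\AbDual{B}/\CartierDual{E}]$ is computed by the same sequence, now using $\AbDual{\AbDual{B}}=B$ and $\CartierDual{\CartierDual{E}}=E$. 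Checking that $e_P$ matches the identification reduces to the compatibility of the evaluation map with the abelian and Cartier biduality pairings, which is a sign check.

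\textbf{Step 2: general stacks.} Apply the same method to the presentation of $\cG$. By the analogous abelian-quotient theorem for stacks with finite flat inertia, $\cG$ is, fppf-locally, the quotient $[P/K]$ with $P$ as in Step~1 and $K$ finite flat mapping to $P$. Its dual is then the "cone" of $\Dual{P}\to\Dual{[\,\cdot/K]}$, a quotient of the type handled above, and biduality follows from the five lemma applied to the morphism of triangles induced by $e$.

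\textbf{Main obstacle.} The hypothesis that $2$ is invertible must enter here. Strict commutativity of $\HomStack(\cG,\BG{\Gm})$, and the identification of $\Dual{\cG}$ with the Picard stack attached to a length-two complex (Deligne's equivalence with $\tau_{[-1,0]}$ of the derived category), requires that the symmetry constraint on $\cG$ be controlled. Without $2$ invertible, a Picard stack need not be strictly commutative, and the $\ExtSheaf{1}{}$ versus $\HomStack$ comparison fails by a $\mathbb Z/2$ term. So I expect the hard point to be showing that $e_\cG$ is an isomorphism on $\HHm$. This is where the $2$-torsion discrepancy between "commutative" and "strictly commutative" lives. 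I would handle it by using Brochard's criterion that dualizability is equivalent to the vanishing of specific $\mathcal{E}xt$ sheaves and the isomorphy of $e$ on $\HHz$ and $\HHm$, and by verifying these via the explicit quotient presentations.
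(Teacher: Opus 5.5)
There is a genuine gap. Your Step~1, and everything built on it, needs a presentation $0\to E\to P\to B\to0$ (with $E$ finite flat and $B$ abelian) over the base in question. Such a sequence is available only over a discrete valuation ring, by the structure theorem proved in the paper, and over an Artinian local ring, by Raynaud's d\'evissage (Brochard's Proposition~3.11).

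Noetherian approximation merely reduces you to a noetherian base. It does not produce a presentation there, and presentations over fibres or over DVRs do not assemble into one over $S$. Likewise, the fppf-local description $\cG\simeq[P/K]$ invoked in Step~2 is not established anywhere; over a DVR the paper only obtains $0\to\cK\to\cG\to B\to0$.

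The paper never needs a presentation over $S$:
\begin{itemize}
  \item It proves properness by the valuative criteria, using that $\Dual{-}$ commutes with base change and that $\Dual{\cG_V}$ is proper over every DVR $V$.
  \item It proves flatness by the infinitesimal criterion, which reduces to Artinian local bases, where Raynaud's d\'evissage exists.
  \item Finiteness of the diagonal then follows from properness together with finiteness of the inertia $\CartierDual{P}$.
  \item An arbitrary base is reached by noetherian approximation of $\cG$ itself, not of a presentation.
\end{itemize}

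You have also misidentified where $2$ enters, so the decisive step is missing. The stacks here are strictly commutative by hypothesis, and Deligne's equivalence needs no assumption on $2$. The paper's characteristic-$2$ counterexample, a $\mu_2$-gerbe over $\alpha_2$ built from Breen's class, is strictly commutative and still violates biduality.

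What fails in that example is exactness of the dual of the canonical sequence. Write $P=\HHz(\cG)$ and $H=\HHm(\cG)$. Because $\Dual{\cG}^\flat\simeq\tau_{\le0}\RHom(\cG^\flat,\Gm[1])$ discards $\mathcal H^1$, the sequence $0\to\Dual{P}\to\Dual{\cG}\to\CartierDual{H}\to0$ is exact if and only if the obstruction $\CartierDual{H}\to\ExtSheaf{2}{P}$ vanishes. Your ``cone'' description of $\Dual{\cG}$ and your five-lemma argument both assume this exactness silently.

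The missing input combines two vanishing results:
\begin{itemize}
  \item $\ExtSheaf{2}{B}=0$ for abelian schemes (Ribeiro--Rosengarten);
  \item $\ExtSheaf{2}{E}=0$ for finite flat $E$ when $2$ is invertible (Breen).
\end{itemize}
Together these give $\ExtSheaf{2}{P}=0$ wherever a finite-by-abelian presentation exists. Then $\Dual{\cG}$ is a $\Dual{P}$-torsor over the finite flat $\CartierDual{H}$, and it inherits properness and flatness from $\Dual{P}\simeq[\AbDual{B}/\CartierDual{E}]$.

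For (ii), you do not need to redualize quotient presentations. Once (i) is known, Brochard's Remark~4.14 deduces biduality from the statement over algebraically closed fields, together with flatness and finite presentation.
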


Brochard proved algebraicity and finite presentation of $\Dual{\cG}$, the
affineness of its diagonal, and properness of its geometric fibres
\cite[Theorem~3.14]{BrochardDuality}.  He also showed that the second assertion
of \cref{thm:intro-brochard} follows from the first
\cite[Remark~4.14]{BrochardDuality}.  The remaining issues are flatness and
global properness of the dual; once properness is known, its quasi-finite
affine diagonal becomes finite.  

The point is subtle because a proper flat group scheme whose generic fibre is
abelian need not be an abelian scheme.  Serre's mixed-characteristic example,
and its equal-characteristic analogue, arise as schematic images of
homomorphisms of abelian schemes and are proper and flat but nonsmooth
\cite[Examples~4.11 and~4.12]{AchterCasalainaMartinWise}.  In particular, one
cannot replace every such model by an abelian scheme or assume cohomological
flatness in degree zero.

Our first result gives a direct  replacement.  We say that $P$ is a
\emph{finite extension of an abelian scheme} if it occurs in an exact sequence
as below. 

\begin{theorem}\label{thm:intro-group-structure}
Let $R$ be a discrete valuation ring and let $P$ be a proper flat finitely
presented commutative $R$-group scheme.  There is an exact sequence
\[
  0\longrightarrow E\longrightarrow P\longrightarrow B\longrightarrow0,
\]
where $E$ is finite flat and $B$ is an abelian scheme.
\end{theorem}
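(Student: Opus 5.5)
The plan is to produce $B$ as the quotient of $P$ by the schematic closure of a suitable finite subgroup of the generic fibre. We then show that this quotient is an abelian scheme by comparing it with its group smoothening, which will be a N\'eron model. Write $K=\Frac(R)$, let $k$ be the residue field, $p$ its characteristic exponent, and fix a prime $\ell\neq p$.

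\emph{Step 1: the generic fibre.} Since $P_K$ is a proper commutative group scheme over a field, we first make it smooth.
\begin{itemize}
  \item If $\operatorname{char}K=p>0$, the quotient of $P_K$ by the kernel of a high enough power of the relative Frobenius is smooth. That kernel is finite.
  \item Then $\pi_0$ of the smooth quotient is finite and its identity component is an abelian variety.
  \item Multiplication by $N=\#\pi_0$ maps the smooth quotient onto its identity component, again with finite kernel.
\end{itemize}
Composing, we get a finite subgroup $F_K\subset P_K$ such that $B_K:=P_K/F_K$ is an abelian variety.

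\emph{Step 2: extend $F_K$ and form the quotient.} Let $E$ be the schematic closure of $F_K$ in $P$. It is $R$-flat because $R$ is a DVR. It is a subgroup scheme, since closures of subgroups commute with products for flat closures over a DVR. It is proper because it is closed in $P$. Every component of $E$ has dimension $\le1$ and dominates $\Spec R$, so $E$ is quasi-finite and hence finite flat. Put $Q:=P/E$, the fppf quotient. It is a proper flat finitely presented group algebraic space with $Q_K=B_K$, and $0\to E\to P\to Q\to 0$ is exact. It remains to show that $Q$ is an abelian scheme.

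\emph{Step 3: good reduction of $B_K$ (the main obstacle).} Serre's example shows that properness of $Q$ alone does not force smoothness, so this step is where real input is needed. I would argue via N\'eron--Ogg--Shafarevich.
\begin{itemize}
  \item For each $n$, the closure of $B_K[\ell^n]$ in $Q$ is finite flat over $R$, by the argument of Step 2.
  \item It is a commutative group scheme of order a power of $\ell$, which is invertible on $R$, so it is \'etale.
  \item Hence all $\ell^n$-torsion points of $B_K$ over $K^{\mathrm{sep}}$ are defined over the fraction field of the strict henselization, i.e.\ the inertia group acts trivially on $T_\ell B_K$.
\end{itemize}
By N\'eron--Ogg--Shafarevich, $B_K$ has good reduction: its N\'eron model $B$ is an abelian scheme.

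\emph{Step 4: identify $Q$ with $B$.} Let $Q'\to Q$ be the group smoothening of Bosch--L\"utkebohmert--Raynaud. Because $Q$ is proper, $Q'(R')=Q(R')=B_K(K')$ for every \'etale local $R'$. Hence $Q'$ satisfies the N\'eron property, so $Q'=B$, and we get a homomorphism $f\colon B\to Q$ that is an isomorphism over $K$.
\begin{itemize}
  \item The image of $f$ is closed and contains the dense generic fibre of the flat $Q$, so $f$ is surjective.
  \item On special fibres, $f_s\colon B_s\to Q_s$ is a surjective homomorphism of finite type group schemes over $k$, hence faithfully flat.
  \item By the fibrewise flatness criterion, $f$ is flat.
  \item Therefore $\Ker f=B\times_Q 0$ is finite flat over $R$ with trivial generic fibre. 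Its rank is $1$, so it is trivial.
\end{itemize}
Thus $f$ is a flat surjective monomorphism, i.e.\ an isomorphism $B\cong Q$. In particular $Q$ is a scheme and an abelian scheme, and $0\to E\to P\to B\to0$ is the desired sequence.
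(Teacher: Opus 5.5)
Steps 1--3 are fine; Step 3 is essentially the good-reduction argument the paper uses in \cref{prop:normalization}. Step 4 is false, however, and with it the strategy of taking $E$ to be the closure of an \emph{arbitrary} finite $F_K$ with $P_K/F_K$ abelian.

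The faulty claim is that the surjective homomorphism $f_s\colon B_s\to Q_s$ is faithfully flat. Surjectivity gives faithful flatness when the target is geometrically reduced, by generic flatness plus homogeneity, but not in general. For example, the unit section $1\to\alpha_p$, or $G_{\mathrm{red}}\hookrightarrow G$ for a non-reduced $G$ over $\bar k$, is surjective and not flat.

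Step 4 uses nothing about $Q$ beyond its being proper, flat and finitely presented with abelian generic fibre. It would therefore prove that every quasiabelian model is an abelian scheme, which is exactly what the Serre example you cite refutes. Concretely, let $P$ be Serre's mixed-characteristic model. Then $P_K$ is already an abelian variety, so your Step 1 yields $F_K=0$, hence $E=0$ and $Q=P$, which is not an abelian scheme. Your $f\colon B\to Q$ is then the normalization $u\colon A\to N$. On the geometric special fibre it factors as $A_{\bar s}\to G_{\mathrm{red}}\hookrightarrow G$ with $G/G_{\mathrm{red}}=C\neq0$, so it is not flat. Its kernel is not flat either (\cref{rem:normal-not-iso}), so the rank-one argument never gets started.

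What is missing is a finite \emph{flat} homomorphism from the quasiabelian part \emph{onto} an abelian scheme. The smoothening map points the wrong way and is not flat.

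The paper proceeds as follows.
\begin{enumerate}
  \item Take $N$ to be the closure of the maximal abelian subvariety of $P_K$, and choose an $R$-ample $\mathcal L$ on $N$.
  \item Use the translation-difference family of $\mathcal L$ along $u$ to define $q_{\mathcal L}\colon N\to A^\vee$ with $q_{\mathcal L}\circ u=\phi_{u^*\mathcal L}$.
  \item Finiteness of $q_{\mathcal L}$ comes from comparison with the polarization. Flatness comes from miracle flatness, since $N$ is Cohen--Macaulay and $A^\vee$ is regular.
  \item The quotient $Q=P/N$ is finite flat of rank $m$, so $[m]_P$ factors through some $r\colon P\to N$.
  \item The map $f=q_{\mathcal L}\circ r$ is onto $A^\vee$. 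Its kernel $E$ is an extension of $Q$ by $\operatorname{Ker}([m]\circ q_{\mathcal L})$, hence finite flat.
\end{enumerate}

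Because this $E$ is flat over $R$, it is the schematic closure of its generic fibre. So your closure idea does work for a correct choice of $F_K$, for instance the generic kernel of $f$. Showing that the resulting quotient is an abelian scheme, though, requires this flatness input; the N\'eron mapping property alone cannot supply it.
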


No assumption on $2$ is needed.  The proof
starts with the closure $N$ of the maximal abelian subgroup of the generic
fibre.  A polarization produces a finite faithfully flat quotient
$N\to B$.  If $Q=P/N$ and $m=\operatorname{rk}(Q)$, then $[m]_P$ factors
through $N$; composing the resulting map $P\to N$ with $N\to B$ gives the
asserted quotient in one step.

For a quasiabelian model the construction is considerably more explicit.

\begin{theorem}\label{thm:intro-structure}
Let $R$ be a discrete valuation ring and let $N$ be a proper flat finitely
presented group scheme whose generic fibre is an abelian variety.  Then $N$ is
commutative.  The \Neron{} model $A$ of its generic fibre is an abelian scheme,
and the extension of the generic identity
\[
  u\colon A\longrightarrow N
\]
is finite and identifies $A$ with the normalization of $N$.

For every $R$-ample invertible sheaf $\cL$ on $N$, put
$\cM=u^*\cL$, $K=\Ker(\phi_{\cM})$, and
$F=\Ker(q_\cL)$, where
\[
  q_\cL\colon N\longrightarrow A^\vee,
  \qquad q_\cL u=\phi_{\cM},
\]
is the polarization quotient.  Then $F$ is finite flat and, for the induced
map $h:K\to F$, there is a bicartesian diagram
\[
\begin{array}{ccccccccc}
0&\to&K&\to&A&\xrightarrow{\phi_{\cM}}&A^\vee&\to&0\\
 &&\downarrow h&&\downarrow u&&\Vert\\
0&\to&F&\to&N&\xrightarrow{q_\cL}&A^\vee&\to&0.
\end{array}
\]
Equivalently, $N\simeq(A\times_R F)/K$, where
$k\mapsto(k,-h(k))$, and $h$ is an isomorphism on the generic fibre.
\end{theorem}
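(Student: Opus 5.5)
The plan is to first obtain the map $u$ and its finiteness, then build $q_\cL$, and finally read off the bicartesian square from the two short exact sequences.

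\textbf{Step 1: construct $u$ and show it is finite.} Let $A_\generic=N_\generic$ and let $A$ be the \Neron{} model of $A_\generic$.

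By the \Neron{} mapping property applied to the smooth locus, the identity of $N_\generic$ extends to $u\colon A\to N$, possibly defined only on $A$ restricted to the smooth part. More precisely, $N^{\mathrm{sm}}\to A$ is an isomorphism generically. Since $N$ is proper, the valuative criterion gives $N(R^{\mathrm{sh}})=N_\generic(K^{\mathrm{sh}})=A(R^{\mathrm{sh}})$. Hence the natural map $N^{\mathrm{sm}}\to A$ is an open immersion with image containing all sections.

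I would argue as follows:
\begin{itemize}
  \item Take the schematic closure of the graph of the generic identity in $A\times_R N$.
  \item Since $N$ is proper and $A$ is separated, use Weil extension for rational maps from a smooth $R$-group scheme to a separated group. A rational map of groups defined in codimension $\ge 2$ extends, and the generic fibre is dense.
  \item This gives a homomorphism $u\colon A^0\to N$. Then show $A$ is proper as follows: $A^0$ maps quasi-finitely to $N$, because its kernel is generically trivial and flat-closure arguments apply.
\end{itemize}

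The key claim is that $A$ has abelian (proper) special fibre. If the special fibre of $A^0$ contained a $\mathbf G_a$ or torus, then $u$, being a homomorphism with finite kernel into a proper group, would have to kill it. So $u$ kills it on the special fibre, but its kernel is quasi-finite. To settle this cleanly I would use the Chevalley decomposition and the fact that a homomorphism from a connected affine group to an abelian variety is trivial. This gives $A$ an abelian scheme. Then $u$ is a proper quasi-finite morphism, hence finite.

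\textbf{Step 2: normalization and commutativity.} $A$ is normal and $u$ is finite birational, so $A$ is the normalization of $N$. Commutativity of $N$ follows because $N$ is flat with commutative generic fibre, so the commutator map $N\times N\to N$ agrees with $0$ on a dense open subscheme, and $N$ is separated.

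\textbf{Step 3: the polarization quotient.} Put $\cM=u^*\cL$; it is ample, so $\phi_\cM$ is an isogeny. To construct $q_\cL$, I would define $N\to \Pic^0$ by $x\mapsto t_x^*\cL\otimes\cL^{-1}$ pulled back along $u$, i.e.\ $x\mapsto u^*(t_x^*\cL\otimes\cL^{-1})$. This lands in $A^\vee$ because $u$ is a homomorphism: $u^*t_x^*\cL=t_a^*\cM$ when $x=u(a)$. Since $A^\vee$ represents $\Pic^0_{A/R}$ and $N$ is connected, the map lands in $A^\vee$, and $q_\cL u=\phi_\cM$ by construction.

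\textbf{Step 4: exactness and the bicartesian diagram.} Surjectivity of $q_\cL$ is inherited from $\phi_\cM$. Flatness of $q_\cL$ follows from fibrewise flatness, which holds because it is a homomorphism onto an abelian variety on each fibre, so the fibrewise criterion applies. Thus $F=\Ker q_\cL$ is flat and proper. It is quasi-finite because it receives a finite surjection from $K$: indeed $u^{-1}(F)=K$, and $u$ is surjective since it is finite and birational onto an irreducible target. Hence $F$ is finite flat.

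Now compare the rows. The square is cartesian because $u^{-1}(F)=K$. It is cocartesian because the pushout $(A\times F)/K$ maps to $N$, and this map is compatible with both extensions of $A^\vee$, so the five lemma gives an isomorphism. On the generic fibre $u$ is the identity, so $h$ is an isomorphism there.

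\textbf{Main obstacle.} I expect the main obstacle to be Step 1, namely extending the identity to $u$ and proving that the \Neron{} model is proper. For this I would first try to use the finite-type \Neron{} model via Bosch–Lütkebohmert–Raynaud, and show it has good reduction by the criterion that $N$ proper forces potentially good reduction.
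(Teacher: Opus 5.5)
The gap is in Step~1, exactly where you expect it: nothing in your proposal proves that $N_\generic$ has good reduction. Your Steps~2--4 are fine once $u$ is known to be finite. In particular, proving fibrewise flatness of $q_\cL$ by homogeneity (a surjective homomorphism onto a smooth group over a field is flat) and then applying the fibrewise criterion is a valid substitute for the paper's Cohen--Macaulay argument.

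Your Chevalley argument reduces good reduction to quasi-finiteness of $\Ker(u\colon A^0\to N)$. But the justification ``the kernel is generically trivial and flat-closure arguments apply'' gives nothing, because $\Ker(u)$ is not flat over $R$. Even with $A$ abelian, in Serre's example the kernel of the normalization is trivial generically and nontrivial on the special fibre, so its special fibre is not controlled by its generic fibre. Before good reduction is known, $u$ is not proper. Hence $u(A^0_s)$ could be a subgroup of $N_s$ of dimension $<g$, and no dimension count applies. Quasi-finiteness of $\Ker(u_s)$ is then equivalent to what you are trying to prove.

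The other tools you list do not fill the gap. Weil's extension theorem requires a smooth target group, and $N$ is not smooth. ``Potentially good reduction'' is too weak, since the conclusion requires the \Neron{} model over $R$ itself to be proper. Your sentence about $N^{\mathrm{sm}}\to A$ being an open immersion whose image contains all sections is also wrong if $N^{\mathrm{sm}}$ means the smooth locus: when the special fibre is nonreduced, the smooth locus is just $N_\generic$.

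The missing idea is prime-to-$p$ torsion.
\begin{enumerate}
  \item For $\ell$ different from the residue characteristic, $[\ell^n]_N$ is \'{e}tale. So $N[\ell^n]$ is quasi-finite \'{e}tale and, $N$ being proper, finite \'{e}tale over $R$.
  \item Hence inertia acts trivially on $T_\ell(N_\generic)$, and the \Neron--Ogg--Shafarevich criterion gives good reduction.
  \item The paper then obtains $u$ from the group smoothening of $N$. The \Neron{} mapping criterion identifies this smoothening with $A$, because properness gives $N(R^{\mathrm{sh}})=N_\generic(\Frac(R^{\mathrm{sh}}))$.
  \item Finiteness of $u$ follows from $A[\ell^n]\simeq N[\ell^n]$. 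Once $A$ is known to be proper, a dimension count for the surjection $u_s\colon A_s\to N_s$ would also do.
\end{enumerate}

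Your Chevalley route can be repaired with the same input. The map $A[\ell^n]\to N[\ell^n]$ is \'{e}tale, a generic isomorphism, and lands in a finite \'{e}tale scheme, so it is a monomorphism. Therefore $\Ker(u_s)$ contains no torus. To exclude a unipotent part, compare the $\ell^{2gn}$ points of $N[\ell^n](R^{\mathrm{sh}})=A[\ell^n](R^{\mathrm{sh}})$ with the $\ell^n$-torsion of the special fibre of $A$. Without some such $\ell$-adic argument, Step~1, and with it the finiteness of $u$ used in Steps~3 and~4, remains unproved.
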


On a geometric special fibre, put $G=N_{\bar s}$.  There is an exact
sequence
\[
  0\longrightarrow G_{\mathrm{red}}\longrightarrow G
  \longrightarrow C\longrightarrow0,
\]
where $G_{\mathrm{red}}$ is an abelian variety and $C$ is finite and
connected.  The specialization of $u$ induces an isogeny
$v:A_{\bar s}\to G_{\mathrm{red}}$, and
\[
  e(N):=\deg(v)=\operatorname{rk}(C).
\]
If the residue characteristic is $p>0$, then $e(N)$ is a power of $p$; in
residue characteristic zero, $e(N)=1$.  Writing
$g=\dim G_{\mathrm{red}}$, the fundamental cycle of $G$ and the coherent
cohomology of its structure sheaf satisfy
\[
  [G]=e(N)[G_{\mathrm{red}}],\qquad
  \dim_{\bar k}H^i(G,\cO_G)
    =e(N)\binom gi\quad(i\geq0).
\]
The automorphism group scheme of the neutral object of $\Dual G$ is
$G^{\mathrm D}\simeq C^{\mathrm D}$, and hence has rank $e(N)$.  Finally,
for every $R$-ample invertible sheaf $\cL$ on $N$,
\[
  e(N)^2\mid\deg\bigl(\phi_{u^*\cL}\bigr).
\]
Thus, in residue characteristic $p>0$, if this polarization has degree prime
to $p$, then $e(N)=1$ and the normalization morphism $u:A\to N$ is an
isomorphism.

The dual statements become shorter and stronger when one uses
\cite[Lemma~3.9]{BrochardDuality}.  From an exact sequence
$0\to E\to P\to B\to0$ one obtains, in every characteristic,
\[
  \Dual P\simeq[B^\vee/E^{\mathrm D}].
\]
The finite faithfully flat cover $B^\vee\to[B^\vee/E^{\mathrm D}]$ proves
properness and flatness directly.  Globalizing gives the following theorem.

\begin{theorem}\label{thm:intro-group-duality}
Let $S$ be a scheme and let $P$ be a proper flat finitely presented
commutative group algebraic space over $S$.  Then $\Dual P$ is algebraic,
proper, flat, and finitely presented over $S$, with finite diagonal.  The
evaluation map $P\to\DoubleDual P$ is an isomorphism.  No assumption on $2$ is
required.
\end{theorem}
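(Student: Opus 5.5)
The plan is to reduce everything to a local statement over a discrete valuation ring, where \cref{thm:intro-group-structure} applies, and then globalize.

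\textbf{Step 1 (formal reductions).} Brochard already proves that $\Dual P$ is algebraic, of finite presentation, with affine diagonal, and that its geometric fibres are proper \cite[Theorem~3.14]{BrochardDuality}; his argument uses only that $P$ is proper, flat and finitely presented, and $P$ has trivial inertia. Moreover, biduality follows once $\Dual P$ is known to be proper and flat \cite[Remark~4.14]{BrochardDuality}. By standard limit arguments we may take $S$ affine and noetherian, since $P$ descends to a finite type $\mathbf Z$-algebra and formation of $\Dual{-}$ commutes with base change. So the remaining claims are flatness and properness of $\Dual P\to S$. Once properness holds, the diagonal is proper, affine and quasi-finite (its fibres are automorphism group schemes of objects, which are finite because geometric fibres are proper stacks). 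Hence it is finite.

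\textbf{Step 2 (the DVR case).} Suppose $S=\Spec R$ with $R$ a discrete valuation ring and $P$ a scheme. (Over a DVR a proper flat group algebraic space is a scheme, or one can rerun the proof of \cref{thm:intro-group-structure} for algebraic spaces.) \Cref{thm:intro-group-structure} gives $0\to E\to P\to B\to 0$ with $E$ finite flat and $B$ an abelian scheme. Dualizing with \cite[Lemma~3.9]{BrochardDuality} uses two facts: $\Dual B=B^\vee$, because $\mathcal Hom(B,\Gm)=0$, and $\Dual E=\BG{}$-shifted Cartier dual, because $\mathcal Ext^1(E,\Gm)=0$ fppf locally for finite flat $E$. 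These give $\Dual P\simeq[B^\vee/E^{\mathrm D}]$, with $E^{\mathrm D}\to B^\vee$ the coboundary. The atlas $B^\vee\to[B^\vee/E^{\mathrm D}]$ is an $E^{\mathrm D}$-torsor, hence finite faithfully flat. Since $B^\vee$ is proper and flat, the quotient stack is flat and universally closed, and it is separated with finite diagonal. So $\Dual P$ is proper and flat over $R$.

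\textbf{Step 3 (globalization).} Flatness of the finite-type algebraic stack $\Dual P$ over a noetherian $S$ can be checked after base change to all DVRs mapping to $S$? That fails as stated, since flatness is not detected on DVRs in general. Instead I would argue with the valuative and fibrewise criteria together with the local structure.

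For properness, use the valuative criterion for stacks of finite type with affine diagonal. Any DVR-point of $S$ pulls $P$ back to a proper flat group over a DVR, and Step~2 together with base-change compatibility of $\Dual{-}$ gives existence of limits, up to the usual finite extension of the DVR.

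For flatness, the DVR case is not sufficient. The main obstacle is therefore to produce the presentation $[B^\vee/E^{\mathrm D}]$ étale or fppf locally on a general base. My approach is as follows. Over a strict henselization, or after base change, \cref{thm:intro-group-structure}'s construction spreads out. One takes $N$ to be the closure of an abelian part, a polarization gives $N\to B$, and multiplication by $m$ gives $P\to N$. The aim is a finite flat quotient $P\to B$ onto an abelian scheme locally on $S$, at least after a stratification, or after passing to the universal case. Failing that, I would establish flatness via the fibrewise criterion. Fibres of $\Dual P$ over points of $S$ are computed by the field case, where the presentation exists. Then use that $\Dual P$ is the quotient of $\mathcal Ext^1(P,\Gm)$ by $\mathcal Hom(P,\Gm)$, and show each of these is flat of locally constant rank. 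Locally constant rank should follow from the cohomology-and-base-change control of $H^i(P_s,\cO)$ given by the rank formulas above. I expect this flatness-over-general-base step to be the hardest part of the proof.
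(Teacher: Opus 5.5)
\textbf{There is a genuine gap: flatness of $\Dual P$ over a general base is never proved.} You flag this step as the hardest one yourself, and the second route you sketch for it cannot work.

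The stack $\Dual P$ is a gerbe over $\HHz(\Dual P)=\ExtSheaf{1}{P}$ banded by $\HHm(\Dual P)=\CartierDual P$. In general neither piece is flat, even when $\Dual P$ is. Take a singular quasiabelian model $N$ over a discrete valuation ring, as in Serre's example. Then $\CartierDual N$ is trivial on the generic fibre but has rank $e(N)>1$ on the special fibre (\cref{cor:thickness-stackiness}). Correspondingly, $h^0(G,\cO_G)=e(N)$ jumps (\cref{cor:defect-cohomology}). So cohomology and base change exhibits non-constant rank; it does not prove constancy.

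Your first route, spreading a finite-by-abelian presentation out locally over a higher-dimensional base or over a stratification, relies on inputs specific to a discrete valuation ring. These are N\'eron models and the fact that the normalization of the quasiabelian part is abelian, and you give no argument that they extend. Moreover, flatness over each stratum would not give flatness over $S$.

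\textbf{The missing observation is that flatness is already known here.} Because $P$ is a group \emph{algebraic space}, flatness of $\Dual P$ is \cite[Theorem~3.14(3)]{BrochardDuality}, recorded as \cref{thm:brochard-known}(iii), and the paper simply quotes it. The mechanism, which the paper reuses in \cref{prop:global-flatness}, works over Artinian bases rather than discrete valuation rings or fibres:
\begin{enumerate}
  \item The infinitesimal flatness criterion \cite[Expos\'{e}~IV, Th\'{e}or\`eme~5.6]{SGA1} reduces flatness of the finitely presented stack $\Dual P$, over a locally noetherian base, to Artinian local bases with algebraically closed residue field.
  \item Over such a base, Raynaud's d\'evissage \cite[Proposition~3.11]{BrochardDuality} gives $0\to F\to P\to A\to 0$ with $F$ finite flat and $A$ an abelian scheme.
  \item \Cref{lem:brochard-quotient} then presents $\Dual P$ as $[A^\vee/F^{\mathrm D}]$, which is flat. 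No second-Ext input is needed, because $P$ has no inertia.
\end{enumerate}
Once flatness is in hand, only global properness was ever open. For that, your Steps~2--3 agree with the paper: the structure theorem over a discrete valuation ring, the quotient $[B^\vee/E^{\mathrm D}]$, the valuative criteria using the affine diagonal, finiteness of the proper diagonal, and noetherian approximation.

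\textbf{A smaller point on biduality.} The paper does not rely on \cite[Remark~4.14]{BrochardDuality}, which is stated in the setting of the conjecture. Instead it argues as follows:
\begin{enumerate}
  \item It uses fibrewise dualizability of duabelian groups, \cite[Theorem~4.13(i)]{BrochardDuality}, which has no hypothesis on $2$.
  \item It shows that $\HHm(\DoubleDual P)$ is unramified with trivial geometric fibres, hence zero, so $\DoubleDual P$ is an algebraic space.
  \item It then applies \cite[Lemma~3.20]{BrochardDuality} directly to $e_P$.
\end{enumerate}
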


The condition on the invertibility of $2$ arises from the passage from group
algebraic spaces to group stacks.  For a group stack $\cG$, the canonical
sequence
\[
  0\longrightarrow\BG{\HHm(\cG)}\longrightarrow\cG
  \longrightarrow\HHz(\cG)\longrightarrow0
\]
contributes the obstruction
$\CartierDual{\HHm(\cG)}\to
\ExtSheaf{2}{\HHz(\cG)}$.  Ribeiro and Rosengarten prove that the second Ext
sheaf of an abelian scheme vanishes over an arbitrary base
\cite[Theorem~4.3]{RibeiroRosengarten}.  In the finite-by-abelian
d\'{e}vissages used below, the remaining possible contribution embeds in the
second Ext sheaf of the finite flat kernel.  Breen's vanishing theorem kills
this term when $2$ is invertible; this is the source of the hypothesis in
\cref{thm:intro-brochard}.  The special-fibre calculation also explains why
the output is stated with finite, rather than finite flat, diagonal: duality
exchanges vertical nilpotent thickness with vertical stackiness.

The discrete-valuation structure theorem also has derived-categorical and
geometric applications.  For an algebraic stack $X$, write $\DQcoh{X}$ for
the unbounded derived category of the abelian category of quasi-coherent
sheaves, and write $\DbCoh{X}$ for the bounded derived category of coherent
sheaves.

\begin{theorem}\label{thm:intro-fourier-mukai}
Let $R$ be a discrete valuation ring and let $P$ be a proper flat finitely
presented commutative $R$-group algebraic space.  There are exact
Fourier--Mukai equivalences
\[
  \DQcoh{P}\simeq\DQcoh{\Dual P},
  \qquad
  \DbCoh{P}\simeq\DbCoh{\Dual P}.
\]
They require neither that $2$ be invertible nor that the finite kernel in a
finite-by-abelian presentation of $P$, or its Cartier dual, be linearly
reductive.

If $P=N$ is quasiabelian, with normalization $u:A\to N$ and pushout datum
$h:K\to F$ as in \cref{thm:intro-structure}, then the bounded equivalence has
the concrete form
\[
  \DbCoh{N}
  \simeq \DbCoh{[A/F^{\mathrm D}]}
  \simeq
  \mathbf D^{\mathrm b}\bigl(\operatorname{Coh}^{F^{\mathrm D}}(A)\bigr),
\]
where $F^{\mathrm D}$ acts on the abelian normalization $A$ by translation
through $F^{\mathrm D}\xrightarrow{h^{\mathrm D}}K^{\mathrm D}\to A$.
\end{theorem}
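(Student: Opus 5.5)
Over the discrete valuation ring $R$ I will first write $P$ as in \cref{thm:intro-group-structure}: choose an exact sequence $0\to E\to P\to B\to0$ with $E$ finite flat and $B$ an abelian scheme. The discussion after \cref{thm:intro-structure} (via \cite[Lemma~3.9]{BrochardDuality}) then identifies $\Dual P\simeq[B^\vee/E^{\mathrm D}]$. Here $E^{\mathrm D}\to\ExtSheaf{1}{B}\simeq B^\vee$ is the connecting map of the dualized sequence, and $E^{\mathrm D}$ acts on $B^\vee$ by translation through it. If $P$ is only an algebraic space, the same sequence exists, since over a DVR such a $P$ is a scheme; alternatively I would run the argument of \cref{thm:intro-group-structure} for algebraic spaces. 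The plan is to realize both sides as equivariant categories and compare them using Polishchuk's kernel-algebra formalism.

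\textbf{Step 1.} Present $P$ itself as a quotient. The isogeny $P\to B$ is an $E$-torsor, so $P$ is the total space of an $E$-torsor over $B$. Dually, I would rather write $P\simeq(B'\times F)/K$ as in \cref{thm:intro-structure}. To get this, pull back along a polarization: choose $\lambda\colon B\to B'$ with $P\times_B B'$ an abelian scheme, or use $[m]$ as in the proof of \cref{thm:intro-group-structure}. The resulting description has $P=[\widetilde A/H]$ and $\Dual P=[\widetilde A^\vee/H^{\mathrm D}]$, with $\widetilde A$ an abelian scheme and $H\subset\widetilde A$ finite flat. Here $H^{\mathrm D}$ maps to $\widetilde A^\vee$ through the dual isogeny, so $\Dual P$ is again a quotient stack of an abelian scheme by a finite flat group acting by translation. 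If $H$ is only a finite flat group mapping to $\widetilde A$, not necessarily injectively, the stacky quotient still makes sense.

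\textbf{Step 2.} On $\widetilde A\times\widetilde A^\vee$ take the Poincar\'e bundle $\cP$; by Mukai/Laumon, over a noetherian base it gives an equivalence on $\DQcoh{\cdot}$ and on $\DbCoh{\cdot}$. Polishchuk's observation is that $H$-equivariant objects on $\widetilde A$ are modules over the kernel algebra $\bigoplus_{h\in H}$ (translation kernels), or more precisely over the pushforward of $\cO_H$ along $H\to\widetilde A$, with its convolution algebra structure. The Fourier--Mukai transform intertwines convolution on $\widetilde A$ with tensor product on $\widetilde A^\vee$. It therefore sends this kernel algebra to the algebra that encodes $H^{\mathrm D}$-equivariance on $\widetilde A^\vee$, namely $\cO$ of the $H^{\mathrm D}$-action: $\cP$ restricted to $H\times\widetilde A^\vee$ is the universal character. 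Because the argument only manipulates algebra objects and their module categories, I never need to average, so linear reductivity of $H$ or $H^{\mathrm D}$ is not required. This is the point of citing Polishchuk instead of the naive ``equivariant Fourier--Mukai''.

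\textbf{Step 3.} Check the module categories. Use the Barr--Beck description $\DQcoh{[X/H]}\simeq$ modules over the monad $p^*p_*$ for the finite flat cover $p\colon X\to[X/H]$; this monad is exactly convolution with $\cO_H$. The FM equivalence preserves compact generators and is compatible with monads, so it induces $\DQcoh{P}\simeq\DQcoh{\Dual P}$. Since $p$ is finite flat, coherent boundedness is detected after pullback, and the equivalence restricts to $\DbCoh$. For a quasiabelian $N$, apply this with $\widetilde A=A$, $H=K$ viewed through the pushout datum of \cref{thm:intro-structure}. The dual side is then $[A/F^{\mathrm D}]$ with $F^{\mathrm D}\to K^{\mathrm D}\to A$, after replacing $A$ by $A^\vee$, whose dual is $A$. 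This gives the displayed concrete form.

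\textbf{Main obstacle.} I expect the hard step to be the unbounded monadic comparison in mixed characteristic with non-reduced $H$. One must show that Barr--Beck applies in the unbounded derived category of the abelian category $\operatorname{Qcoh}$ of the stack, i.e.\ that this category agrees with $\mathbf D_{\mathrm{qc}}$. I would first verify this using that $[X/H]$ has finite flat, hence affine, diagonal over a noetherian base, and then invoke Hall--Neeman--Rydh compact generation.
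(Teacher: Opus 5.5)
\textbf{There is a genuine gap in Steps~1--2.} The problem is structural, not technical.

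\emph{Step 1 asks for a presentation that does not exist.} A space $P$ as in the theorem is in general not a translation quotient $[\widetilde A/H]$ of an abelian scheme by a finite flat group. If $H\to\widetilde A$ is a monomorphism, then $\widetilde A/H$ is again an abelian scheme. If it is not, $[\widetilde A/H]$ has inertia $\Ker(H\to\widetilde A)\neq0$ and cannot be the algebraic space $P$. So a nonzero finite flat $P=E$, a product $E\times_R B$ with $E\neq0$, or a nonsmooth quasiabelian $N$ admits no such presentation.

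Your pullback tricks do not produce one either. Pulling $0\to E\to P\to B\to0$ back along $[m]_B$, with $m=\operatorname{rk}(E)$, kills the extension class and gives $P\simeq(E\times_R B)/B[m]$. Likewise $N\simeq(A\times_R F)/K$. In both cases the finite factor cannot be dropped. In particular $A/K\simeq A^\vee\neq N$, so your choice $\widetilde A=A$, $H=K$ in Step~3 describes $A^\vee$, not $N$.

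\emph{The claimed dual is backwards.} For $H\subset\widetilde A$ finite flat, $\Dual{\widetilde A/H}=(\widetilde A/H)^\vee$ is an extension $0\to H^{\mathrm D}\to(\widetilde A/H)^\vee\to\widetilde A^\vee\to0$. Here $H^{\mathrm D}$ is the kernel of the dual isogeny, so $[\widetilde A^\vee/H^{\mathrm D}]$ is not the dual.

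\emph{The same error drives Step~2.} Fourier--Mukai turns translation by $a$ into tensoring with $\cP_a$. So it carries the action algebra of $H$ acting on $\widetilde A$ through $\delta\colon H\to\widetilde A$ to a twisted algebra $\cA^{H}_{\widetilde A^\vee}(\cL_\delta)$ for the \emph{trivial} $H$-action, with cocycle $\cL_\delta$ pulled back from $\cP$ along $\delta$. It does not produce an $H^{\mathrm D}$-action algebra on $\widetilde A^\vee$. The ``universal character'' you mention is this cocycle: a twisting, not an action. By Polishchuk's Corollary~3.5.5, modules over this algebra are quasi-coherent sheaves on the extension of $\widetilde A^\vee$ by $H^{\mathrm D}$ classified by $\delta$, not $H^{\mathrm D}$-equivariant sheaves. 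As written, your argument therefore computes the wrong category on the far side.

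\textbf{The missing idea is an asymmetry.} A finite-by-abelian \emph{extension} on one side corresponds to a finite translation \emph{quotient} on the other, and $P$ must sit on the extension side. The paper's proof runs as follows.
\begin{itemize}
\item Since $\pi\colon P\to B$ is affine, $\operatorname{Qcoh}(P)$ is the module category of the kernel algebra $(\pi\times\pi)_*\Delta_{P*}\cO_P$ on $B$ (Polishchuk's Lemma~2.2.6).
\item Cartier duality identifies this algebra with $\cA^{H}_{B}(\cL_\delta)$, where $H=E^{\mathrm D}$ (Corollary~3.5.5).
\item The Fourier--Mukai transform of $B$ carries it to the untwisted algebra $\cA^{H}_{B^\vee}$ for translation through $\delta$ (Corollary~3.7.4). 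Its modules form $\operatorname{Qcoh}([B^\vee/H])=\operatorname{Qcoh}(\Dual P)$.
\item Theorem~2.5.1(i),(iii) then yields both equivalences.
\end{itemize}
Your Step~2 computation becomes correct if you run it from the dual side: start with $\Dual P=[B^\vee/E^{\mathrm D}]$ and land on the twisted trivial algebra on $B$, i.e.\ on $P$. For quasiabelian $N$ one uses $0\to F\to N\to A^\vee\to0$, so $B^\vee=A$, $E=F$ and $\delta=\iota_{\cM}\circ h^{\mathrm D}$.

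\textbf{Your anticipated main obstacle does not arise on this route.} Here $\DQcoh{X}$ means the derived category of the abelian category $\operatorname{Qcoh}(X)$. Polishchuk's Theorem~2.5.1 is an equivalence between unbounded derived categories of abelian module categories. So no comparison with $\mathbf D_{\mathrm{qc}}$, no Barr--Beck argument and no compact generation are needed. Moreover, the ``compatibility with monads'' you invoke would require exactly the identification of the transported algebra that goes wrong in Step~2.
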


The proof uses the explicit finite kernel algebras in Polishchuk's
Fourier--Mukai argument \cite{PolishchukKernel}.  In this form it does not
require a higher-Ext concentration statement for finite flat group schemes,
which is important in residue characteristic $2$.  It also applies to an
abelian stack $\cA$ over $R$: Brochard identifies $G=\Dual{\cA}$ as a
duabelian proper flat group scheme and $\cA\simeq\Dual G$, so
\[
  \DQcoh{\cA}\simeq\DQcoh{G},
  \qquad
  \DbCoh{\cA}\simeq\DbCoh{G}.
\]
For the derived categories of the abelian categories of sheaves, this gives an
analogue without a tameness assumption of the Mukai duality theorem in
\cite{DhillonNasserdenMukai}.  That theorem concerns stable
$\infty$-categories; we do not identify $\DQcoh{\cA}$ with the homotopy
category of the stable $\infty$-category appearing there, nor assert a stable
enhancement here.

The same finite by abelian presentation has a geometric consequence beyond
the quasiabelian case: every $P$ in \cref{thm:intro-fourier-mukai} is
Gorenstein over $R$, its relative dualizing sheaf is noncanonically trivial,
and, if $g=\dim P_{\Frac(R)}$, a choice of trivialization gives
\[
  \RHom_R\bigl(\mathbf R\Gamma(P,\cO_P),R\bigr)
  \simeq \mathbf R\Gamma(P,\cO_P)[g].
\]

The organization is as follows.  In \cref{sec:background} we recall the exact
sequences, Ext statements, and quotient presentation used in the argument.
The normalization, polarization quotient, pushout normal form, and defect are
proved in \cref{sec:quasiabelian}.  Their duality consequences appear in
\cref{sec:local-duality}.  In \cref{sec:devissage} we pass to arbitrary proper
flat group schemes and then to commutative group stacks over a discrete
valuation ring.  The characteristic-free group-space theorem and Brochard's
conjecture are globalized in \cref{sec:global}.  Fourier--Mukai duality and the
general Gorenstein consequence are then recorded in \cref{sec:applications}.
Arithmetic consequences over Dedekind schemes and low-ramification bases are
collected in \cref{sec:arithmetic}, followed by examples in
\cref{sec:examples}.

\section{Commutative group stacks and duality}\label{sec:background}

We work on the big $\fppf$ site of the base.  All group schemes are
commutative unless explicitly stated otherwise.  Exactness for commutative
group stacks is understood in the sense of
\cite[Definition~2.11]{BrochardDuality}.
Throughout, a finite flat group scheme is understood to be finite locally
free.

\subsection{Cohomology sheaves and the dual}

By Deligne's equivalence, a commutative group stack $\cG$ is represented by a
complex $\cG^{\flat}$ of abelian sheaves concentrated in degrees $[-1,0]$;
see \cite[Expos\'{e}~XVIII, \S1.4]{SGA4} and
\cite[Theorem~2.5]{BrochardDuality}.  We write $\HHm(\cG)$ for the
automorphism group of the neutral object and $\HHz(\cG)$ for the sheaf of
isomorphism classes.  There is a canonical exact sequence
\begin{equation}\label{eq:canonical-sequence}
  0\longrightarrow \BG{\HHm(\cG)}\longrightarrow \cG
  \longrightarrow \HHz(\cG)\longrightarrow 0.
\end{equation}

The dual is represented by
\begin{equation}\label{eq:dual-truncation}
  \Dual{\cG}^{\flat}
  \simeq \tau_{\leq 0}\RHom(\cG^{\flat},\Gm[1]).
\end{equation}
Consequently, there is a functorial identification and an exact sequence
\begin{equation}
  \HHm(\Dual{\cG})
    \simeq \CartierDual{\HHz(\cG)}.
  \label{eq:hminus-dual}
\end{equation}
\begin{equation}
  \begin{aligned}
    0\longrightarrow \ExtSheaf{1}{\HHz(\cG)}
      &\longrightarrow \HHz(\Dual{\cG})\\
      &\longrightarrow \CartierDual{\HHm(\cG)}
      \longrightarrow \ExtSheaf{2}{\HHz(\cG)}.
  \end{aligned}
  \label{eq:hzero-dual}
\end{equation}
These are \cite[Lemma~3.4]{BrochardDuality}.

For later use, we record the exactness criterion in the form that we need.

\begin{lemma}\label{lem:dual-exactness}
Let
\[
  0\longrightarrow \cA\longrightarrow \cB\longrightarrow \cC
  \longrightarrow 0
\]
be an exact sequence of commutative group stacks.
\begin{enumerate}
  \item If
  $\ExtSheaf{2}{\HHz(\cC)}=0$ and
  $\ExtSheaf{1}{\HHm(\cC)}=0$, then
  \[
    0\longrightarrow \Dual{\cC}\longrightarrow \Dual{\cB}
    \longrightarrow \Dual{\cA}\longrightarrow 0
  \]
  is exact.
  \item Applied to \eqref{eq:canonical-sequence}, the dual sequence is exact
  if and only if the natural obstruction homomorphism
  \[
    \CartierDual{\HHm(\cG)}\longrightarrow
    \ExtSheaf{2}{\HHz(\cG)}
  \]
  is zero.
\end{enumerate}
\end{lemma}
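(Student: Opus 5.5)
We work throughout with Deligne's complexes. The exact sequence of commutative group stacks corresponds to a distinguished triangle $\cA^\flat\to\cB^\flat\to\cC^\flat\to\cA^\flat[1]$ of complexes concentrated in degrees $[-1,0]$; this is the sense of exactness in \cite[Definition~2.11]{BrochardDuality}. Applying $\RHom(-,\Gm[1])$ produces a distinguished triangle
\[
  \RHom(\cC^\flat,\Gm[1])\to\RHom(\cB^\flat,\Gm[1])\to\RHom(\cA^\flat,\Gm[1])\to\RHom(\cC^\flat,\Gm[2]).
\]
By \eqref{eq:dual-truncation}, the duals are the truncations $\tau_{\le0}$ of the first three terms. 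A triangle of truncations $\tau_{\le0}X\to\tau_{\le0}Y\to\tau_{\le0}Z$ is again a triangle (equivalently, the dual sequence is exact as group stacks) precisely when the connecting map $\mathcal H^0(Z)\to\mathcal H^1(X)$ vanishes. The natural sufficient condition is $\mathcal H^1(X)=0$, which gives an injection $\mathcal H^1(X)\hookrightarrow\mathcal H^1(Y)$ and hence a surjection $\mathcal H^0(Y)\to\mathcal H^0(Z)$. So both parts reduce to computing $\mathcal H^1\RHom(\cC^\flat,\Gm[1])=\mathcal Ext^2(\cC^\flat,\Gm)$ and to identifying the boundary map.

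\textbf{Part (i).} Use the canonical triangle $\HHm(\cC)[1]\to\cC^\flat\to\HHz(\cC)\to$, which is \eqref{eq:canonical-sequence}. The long exact sequence of $\mathcal Ext^\bullet(-,\Gm)$ contains the segment
\[
  \ExtSheaf{2}{\HHz(\cC)}\to\mathcal Ext^2(\cC^\flat,\Gm)\to\mathcal Ext^2(\HHm(\cC)[1],\Gm)=\ExtSheaf{1}{\HHm(\cC)}.
\]
Under the two vanishing hypotheses both outer terms are zero, so the middle term vanishes. Hence the connecting morphism of the truncated triangle is zero, and $\Dual{\cC}\to\Dual{\cB}\to\Dual{\cA}$ is a triangle in degrees $[-1,0]$.

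It remains to check that this triangle is exact in Brochard's sense. The map $\Dual{\cC}\to\Dual{\cB}$ is faithful with kernel given by $\HHm$-injectivity, which is automatic from the long exact sequence in degree $-1$. The map $\Dual{\cB}\to\Dual{\cA}$ is essentially surjective, because the surjectivity on $\mathcal H^0$ was just obtained. Identifying $\cA$ with the kernel is the standard fact that a triangle of $[-1,0]$-complexes corresponds to an exact sequence of Picard stacks. This translation between the two notions of exactness is the bookkeeping step I would check most carefully.

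\textbf{Part (ii).} Now take $\cA=\BG{\HHm(\cG)}$, $\cB=\cG$, $\cC=\HHz(\cG)$. Here the third term is a sheaf, so $\Dual{\cC}^\flat=\tau_{\le0}\RHom(\HHz(\cG),\Gm[1])$, and its $\mathcal H^1$ is $\ExtSheaf{2}{\HHz(\cG)}$. The dual of $\cA$ is $\tau_{\le0}\RHom(\HHm(\cG)[1],\Gm[1])$, whose $\mathcal H^0$ is $\CartierDual{\HHm(\cG)}$ (its $\mathcal H^{-1}$ vanishes). Therefore the only possible obstruction is the connecting map
\[
  \CartierDual{\HHm(\cG)}\to\ExtSheaf{2}{\HHz(\cG)},
\]
which is precisely the last arrow of \eqref{eq:hzero-dual}. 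The dual sequence is exact if and only if $\Dual{\cG}\to\Dual{\BG{\HHm(\cG)}}=\CartierDual{\HHm(\cG)}$ is essentially surjective, that is, surjective on $\mathcal H^0$. By the exactness of \eqref{eq:hzero-dual}, this holds if and only if that arrow is zero. The remaining exactness conditions hold automatically, by the same argument as in part (i). The main point to verify is that the boundary map in \eqref{eq:hzero-dual} coincides with the connecting map of the triangle; this follows from the naturality of \cite[Lemma~3.4]{BrochardDuality}.
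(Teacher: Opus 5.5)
Your proof is correct, but it takes a different route from the paper, which gives no argument of its own: it quotes \cite[Proposition~3.18(b)]{BrochardDuality} for (i) and \cite[Remark~3.19]{BrochardDuality} for (ii).

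You instead derive both parts directly from \eqref{eq:dual-truncation}. Dualizing the triangle $\cA^\flat\to\cB^\flat\to\cC^\flat\to\cA^\flat[1]$, you observe that the truncated sequence is again a triangle exactly when the connecting map $\partial\colon\mathcal H^0(Z)\to\mathcal H^1(X)=\mathcal{E}\!xt^2(\cC^\flat,\Gm)$ vanishes. For (i), you kill $\mathcal{E}\!xt^2(\cC^\flat,\Gm)$ using the d\'evissage of $\cC^\flat$ into $\HHm(\cC)[1]$ and $\HHz(\cC)$. For (ii), you identify $\partial$ with the last arrow of \eqref{eq:hzero-dual}. That sequence is precisely the relevant segment of the long exact sequence of the dualized canonical triangle, so no separate compatibility check is really needed.

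Your route gives a self-contained argument. It also proves slightly more than is stated: exactness already follows from $\mathcal{E}\!xt^2(\cC^\flat,\Gm)=0$. This condition is implied by the two separate vanishing hypotheses and is a priori weaker than them. The citation gives brevity, and it leaves to Brochard the dictionary between exactness in the sense of \cite[Definition~2.11]{BrochardDuality} and distinguished triangles in degrees $[-1,0]$.

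The dictionary step you flag is lighter than you suggest. For any triangle $X\to Y\to Z\to X[1]$, the five lemma shows that $\tau_{\leq0}$ of the fibre of $\tau_{\leq0}Y\to\tau_{\leq0}Z$ is $\tau_{\leq0}X$. Hence $\Dual{\cC}$ is always the kernel of $\Dual{\cB}\to\Dual{\cA}$; this is left exactness of $\HomStack(-,\BG{\Gm})$. Exactness therefore reduces to essential surjectivity of $\Dual{\cB}\to\Dual{\cA}$, that is, to $\partial=0$.

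One slip: your sentence ``Identifying $\cA$ with the kernel'' should refer to $\Dual{\cC}$.
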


\begin{proof}
The first assertion is \cite[Proposition~3.18(b)]{BrochardDuality}.  The
second is the specialization of the same result to
\eqref{eq:canonical-sequence}, as described in
\cite[Remark~3.19]{BrochardDuality}.
\end{proof}

\subsection{Vanishing statements}

We use the notation
$\ExtSheaf{i}{H}$ for the $\fppf$ sheaf associated with
$T\mapsto\operatorname{Ext}^{i}_{T,\fppf}(H_T,\Gm)$.

\begin{proposition}\label{prop:ext-vanishing}
Let $S$ be a scheme.
\begin{enumerate}
  \item If $F$ is a finite flat commutative $S$-group scheme, then
  $\ExtSheaf{1}{F}=0$.  If $2$ is invertible on $S$, then also
  \[
    \ExtSheaf{2}{F}=\ExtSheaf{3}{F}=0.
  \]
  \item If $A$ is an abelian scheme, then
  $\CartierDual A=0$ and
  \[
    \ExtSheaf{1}{A}\simeq\AbDual A,
    \qquad
    \ExtSheaf{2}{A}=0.
  \]
\end{enumerate}
\end{proposition}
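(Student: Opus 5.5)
This proposition collects known vanishing results, and I will prove it largely by citation plus a few short reductions. For (i), write $F^{\mathrm D}$ for the Cartier dual. The vanishing $\ExtSheaf{1}{F}=0$ is local. Locally on $S$, $F$ is a closed subgroup of an abelian scheme, or more elementarily of a smooth affine group with connected fibres. I will instead use the standard argument from SGA~7, Exp.~VIII. An extension $0\to\Gm\to X\to F\to 0$ of fppf sheaves is representable, because $X$ is a $\Gm$-torsor over $F$. It is commutative and affine over $S$. Fppf-locally on $S$ it splits, because the class lies in $\operatorname{Ext}^1(F,\Gm)$, which sits in the exact sequence
\[
\operatorname{Ext}^1_{\mathrm{rig}}\to \operatorname{Ext}^1(F,\Gm)\to H^0(S,\mathrm{Pic}\ \text{of }F\text{ relative}),
\]
and both outer terms vanish after localization. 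Rigidified extensions correspond to $H^1(S,F^{\mathrm D})$, which dies fppf-locally. The line-bundle obstruction vanishes because $F$ is finite over $S$, so $\Pic(F_T)$ is trivial Zariski-locally on $T$. So the first claim reduces to a standard torsor argument, and I will cite SGA~7 VIII~3.3.1 for it.

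For the vanishing of $\ExtSheaf{2}{F}$ and $\ExtSheaf{3}{F}$ when $2$ is invertible, I will invoke Breen's theorem (Breen, \emph{Extensions du groupe additif} and \emph{Un théorème d'annulation pour certains Ext$^i$ de faisceaux abéliens}). That theorem states exactly that for finite flat commutative $F$ one has $\mathcal E\!xt^i(F,\Gm)=0$ for $i=1,2$ over any base, and for $i=3$ when $2$ is invertible. More precisely, Breen handles $i=2$ unconditionally away from $2$, which suffices here. The reduction I will use is local on $S$. Choose a resolution of $F$ by locally free finite group schemes through the Bégueri/Raynaud embedding $0\to F\to G_0\to G_1\to0$ with $G_i$ smooth affine with connected fibres. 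Then $\mathcal E\!xt^i(G_j,\Gm)$ can be compared using the Breen–Deligne resolution. The case $i=3$ is where $2$ enters, via the $\mathbf Z/2$-symmetric terms in that resolution. This is the main obstacle, and I will not reprove it but cite Breen directly.

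For (ii), $A^{\mathrm D}=\operatorname{Hom}(A,\Gm)=0$ because $A$ has proper connected fibres and $\Gm$ is affine, so any homomorphism is constant on fibres. The isomorphism $\ExtSheaf{1}{A}\simeq A^\vee$ is the Weil–Barsotti formula over a general base: an extension of $A$ by $\Gm$ is a $\Gm$-torsor over $A$ which is primitive under the theorem of the cube. This gives the identification with $\Pic^0_{A/S}=A^\vee$ (SGA~7 VII, or Oort). Finally, $\ExtSheaf{2}{A}=0$ is \cite[Theorem~4.3]{RibeiroRosengarten}, which I will cite directly.
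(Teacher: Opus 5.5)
Your route is the same as the paper's. There the proposition is proved purely by citation: Breen (via Brochard's Theorems~11.1--11.2) for (i), Brochard's Theorem~11.4 for $A^{\mathrm D}=0$, and Ribeiro--Rosengarten, Corollary~3.6 and Theorem~4.3, for Barsotti--Weil and $\ExtSheaf{2}{A}=0$. Your citations of SGA~7~VIII, Breen, and Ribeiro--Rosengarten do the same work. However, the surrounding sketches contain claims you should delete rather than rely on.

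\textbf{Breen's theorem is misstated.} It is \emph{not} true that $\ExtSheaf{2}{F}=0$ for finite flat $F$ over an arbitrary base. Over an algebraically closed field of characteristic $2$, Breen's nonzero class in $\operatorname{Ext}^2(\alpha_2,\Gm)$ gives a nonzero section of $\ExtSheaf{2}{\alpha_2}$. The paper recalls this in its last remark, where it produces a $\mu_2$-gerbe over $\alpha_2$ for which biduality fails. So the hypothesis $2\in\cO_S^\times$ is already needed in degree $2$, and your heuristic that $2$ enters only at $i=3$ is wrong. The proposition survives only because it asserts the degree-$2$ and degree-$3$ vanishing under that hypothesis, where your citation is correct.

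\textbf{Two smaller points.}
\begin{enumerate}
  \item Your sketch for $\ExtSheaf{1}{F}=0$ is circular. Extensions with trivial underlying $\Gm$-torsor are classified by symmetric $2$-cocycles $F\times_S F\to\Gm$ modulo coboundaries. Identifying them with $H^1(S,F^{\mathrm D})$ is precisely the content of SGA~7~VIII~3.3.1, so only the citation does the work.
  \item ``Constant on fibres'' gives $A^{\mathrm D}=0$ only over reduced bases. In general you need $f_*\cO_{A_T}=\cO_T$ for every $T$ (cohomological flatness in degree zero). Then a homomorphism $A_T\to\Gm$ is a unit pulled back from $T$, and it equals $1$ by evaluating at the identity section.
\end{enumerate}
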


\begin{proof}
The first assertion is due to Breen
\cite{BreenExtensions,BreenVanishing}; see also
\cite[Theorems~11.1 and~11.2]{BrochardDuality}.  The Cartier-dual assertion
for an abelian scheme is \cite[Theorem~11.4]{BrochardDuality}.  The
Barsotti--Weil identification and the second-Ext vanishing are
\cite[Corollary~3.6 and Theorem~4.3]{RibeiroRosengarten}.
\end{proof}

More precisely, for every morphism $T\to S$,
Ribeiro and Rosengarten construct a functorial isomorphism
\begin{equation}\label{eq:global-ext2-abelian}
  \operatorname{Ext}^{2}_{T,\fppf}(A_T,\Gm)
  \simeq H^1_{\fppf}(T,A_T^\vee).
\end{equation}
These groups need not vanish; nevertheless, the fppf sheaf associated with
the left-hand presheaf is zero.

The classical duality theorem for abelian schemes gives
$\DoubleDual A\simeq A$; we use this identification without further comment.
When $S$ is locally noetherian, we refer to
\cite[Chapter~III, \S\S19--20]{Oort} for a proof.  
The arbitrary-base statement is  in
\cite[Expos\'{e}~VIII, \S3.2]{SGA7}.

\begin{lemma}\label{lem:brochard-quotient}
Let
\[
  0\longrightarrow F\longrightarrow G\longrightarrow B\longrightarrow0
\]
be an exact sequence of abelian $\fppf$ sheaves, where $F$ is a finite flat
group scheme and $B$ is an abelian scheme.  If
\[
  \delta\colon F^{\mathrm D}\longrightarrow B^\vee
\]
is the connecting homomorphism, then
\[
  \Dual G\simeq[B^\vee/F^{\mathrm D}],
\]
where $F^{\mathrm D}$ acts on $B^\vee$ by translation through $\delta$.
In particular, if $G$ is proper and flat, then $\Dual G$ is proper and flat
with finite diagonal.
\end{lemma}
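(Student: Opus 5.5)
We view the sequence as a short exact sequence of commutative group stacks (schemes and sheaves regarded as stacks with trivial $\HHm$) and apply the exactness criterion of \cref{lem:dual-exactness}(i) with $\cA=F$, $\cB=G$, $\cC=B$. Since $B$ is an abelian scheme, $\HHm(B)=0$, so $\ExtSheaf{1}{\HHm(B)}=0$ trivially, and $\ExtSheaf{2}{B}=0$ by \cref{prop:ext-vanishing}(ii). This gives an exact sequence
\[
  0\To\Dual B\To\Dual G\To\Dual F\To0
\]
of commutative group stacks.

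Next we identify the terms. By \eqref{eq:hminus-dual} and \eqref{eq:hzero-dual}, combined with \cref{prop:ext-vanishing}, we have $\HHm(\Dual B)=\CartierDual B=0$ and $\HHz(\Dual B)=\ExtSheaf{1}{B}=B^\vee$, so $\Dual B\simeq B^\vee$. For the finite flat $F$, $\HHz(\Dual F)=\ExtSheaf{1}{F}=0$ (this needs no hypothesis on $2$) and $\HHm(\Dual F)=F^{\mathrm D}$, so $\Dual F\simeq\BG{F^{\mathrm D}}$. The exact sequence therefore reads
\[
  0\To B^\vee\To\Dual G\To\BG{F^{\mathrm D}}\To0.
\]
Passing to Deligne complexes, $\Dual G^\flat$ has $\HHm=\Ker(F^{\mathrm D}\to B^\vee)$ and $\HHz=\Coker$, where the map is the boundary $\HHm(\Dual F)\to\HHz(\Dual B)$ of the long exact sequence. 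More concretely, $\Dual G^\flat$ is quasi-isomorphic to the two-term complex $[F^{\mathrm D}\xrightarrow{\delta'}B^\vee]$, and a commutative group stack represented by such a complex is the quotient stack $[B^\vee/F^{\mathrm D}]$.

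The main point, which I expect to be the real obstacle, is showing that this complex is genuinely $[F^{\mathrm D}\to B^\vee]$ up to quasi-isomorphism and that $\delta'$ agrees with $\delta$ (up to sign). I would work with $\RHom(-,\Gm[1])$ applied to the triangle $F\to G\to B\to F[1]$. Truncating $\tau_{\le0}$ gives $\Dual G^\flat$ as the cone of the induced map $\Dual B^\flat\to\Dual F^\flat$ shifted, i.e.\ the cone of $F^{\mathrm D}[1]\to B^\vee$ in degree $0$. The map $F^{\mathrm D}[1]\to B^\vee[1]$ is exactly the morphism induced by $\Hom(F,\Gm)\to\operatorname{Ext}^1(B,\Gm)$, which is the connecting map $\delta$ of the Ext long exact sequence. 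Alternatively, and more concretely, pulling back the extension $G$ along a character $\chi\colon F\to\Gm$ gives the pushout $\chi_*G\in\ExtSheaf{1}{B}=B^\vee$, and this is by definition $\delta(\chi)$. The resulting map of stacks $B^\vee\to\Dual G$ is invariant under $F^{\mathrm D}$ translation: a trivialization of $\chi_*G$ is exactly the data that modifies a homomorphism $G\to\BG{\Gm}$. This gives a morphism $[B^\vee/F^{\mathrm D}]\to\Dual G$, which induces isomorphisms on $\HHm$ and $\HHz$ by the five lemma, so it is an isomorphism.

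For the final assertion, $B^\vee$ is proper, flat, and of finite presentation, and $F^{\mathrm D}$ is finite flat, so $B^\vee\to[B^\vee/F^{\mathrm D}]$ is an $F^{\mathrm D}$-torsor, hence finite locally free and surjective. Properness and flatness of the quotient then descend from $B^\vee$. The diagonal of the quotient is the morphism $F^{\mathrm D}\times B^\vee\to B^\vee\times B^\vee$ given by $(f,x)\mapsto(x,x+\delta f)$ after base change along the cover. This morphism is finite, since $F^{\mathrm D}$ is finite and $B^\vee$ is separated, so the diagonal is finite.
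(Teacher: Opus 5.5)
Your proposal is correct, and the geometric half (the $F^{\mathrm D}$-torsor $B^\vee\to[B^\vee/F^{\mathrm D}]$, descent of flatness, and finiteness of the diagonal via $(f,x)\mapsto(x,x+\delta f)$) is the paper's argument. One phrasing needs fixing there: properness does not literally ``descend'' along the cover. Assemble it as the paper does: separatedness from the finite diagonal, finite type from quasi-compactness of $B^\vee$, and universal closedness from surjectivity of the cover and properness of $B^\vee$.

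The real difference is the quotient presentation. The paper does not prove it; it quotes \cite[Lemma~3.9]{BrochardDuality} and stresses that the only vanishing input is $\ExtSheaf{1}{F}=0$.

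You re-derive it. Your first route, through \cref{lem:dual-exactness}(i), additionally uses the Ribeiro--Rosengarten vanishing $\ExtSheaf{2}{B}=0$. This is available over any base by \cref{prop:ext-vanishing}, so nothing breaks, but it is more than the statement needs.

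The step you flag as the main obstacle is formal. The exact sequence $0\to B^\vee\to\Dual{G}\to\BG{F^{\mathrm D}}\to0$ gives a distinguished triangle $B^\vee\to\Dual{G}^{\flat}\to F^{\mathrm D}[1]\to B^\vee[1]$. Rotating it exhibits $\Dual{G}^{\flat}$ as the cone of a morphism $F^{\mathrm D}\to B^\vee$ between sheaves in degree $0$, that is, of an honest homomorphism. Its cone is literally the complex $[F^{\mathrm D}\to B^\vee]$. The sign ambiguity with $\delta$ is irrelevant, since inversion on $F^{\mathrm D}$ identifies the two quotient stacks.

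Your alternative routes need only $\ExtSheaf{1}{F}=0$ and are essentially Brochard's proof. These are truncating $\RHom(-,\Gm[1])$ of the triangle, or building the $F^{\mathrm D}$-invariant map $B^\vee\to\Dual{G}$ and comparing $\HHm$ and $\HHz$. Note that $\tau_{\leq 0}$ is not exact in general. It carries this particular triangle to a triangle precisely because $\mathcal H^0\bigl(\RHom(F,\Gm[1])\bigr)=\ExtSheaf{1}{F}=0$, and you should say so.
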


\begin{proof}
The quotient presentation is \cite[Lemma~3.9]{BrochardDuality}; its only
vanishing hypothesis is $\ExtSheaf{1}{F}=0$.
Put $H=F^{\mathrm D}$, $X=B^\vee$, and $\cX=[X/H]$.  The action groupoid is
\[
  H\times_S X\rightrightarrows X.
\]
Its source and target maps are finite flat: the source is the projection,
and the target becomes the projection after applying the automorphism
$(h,x)\mapsto(h,h\cdot x)$.  Thus $\cX$ is algebraic and the quotient
morphism
\[
  p\colon X\longrightarrow\cX
\]
is representable, flat, locally of finite presentation, and surjective
\cite[Tags~06FH and~06FI]{Stacks}.  More precisely, $p$ is an $H$-torsor;
after any base change $T\to\cX$ it becomes an $H_T$-torsor over $T$, and is
therefore finite locally free.  Hence $p$ is a finite faithfully flat cover,
not necessarily a smooth atlas.

Since $X\to S$ is smooth, it is flat and locally of finite presentation.
Flatness and local finite presentation descend through $p$, so
$\cX\to S$ has both properties
\cite[Tags~06Q0 and~06Q9]{Stacks}.

We next compute the diagonal.  After the faithfully flat base change
$X\times_S X\to\cX\times_S\cX$, it is the action-groupoid morphism
\[
  j\colon H\times_S X\longrightarrow X\times_S X,
  \qquad (h,x)\longmapsto(x,h\cdot x).
\]
The first projection $X\times_S X\to X$ is separated, since $X\to S$ is
separated, while its composite with $j$ is the finite morphism
$H\times_S X\to X$.  It follows that $j$ is finite
\cite[Tag~081Z]{Stacks}.  The cartesian description of the diagonal of a
quotient stack identifies $j$ with the displayed base change of
$\Delta_{\cX/S}$ \cite[Tag~0DTY]{Stacks}.  Since finiteness is
$\fppf$-local on the target, the diagonal is finite
\cite[Tags~04XD and~0426]{Stacks}.  In particular, $\cX\to S$ is separated.

It remains to prove properness.  Since $p$ is surjective and $X\to S$ is
quasi-compact, $\cX\to S$ is quasi-compact \cite[Tag~050X]{Stacks}.
Together with local finite presentation, this shows that $\cX\to S$ is of
finite type.  Finally,
$X\to S$ is proper and hence universally closed.  Surjectivity of $p$
therefore makes $\cX\to S$ universally closed; combined with separatedness
and finite type, this proves properness by \cite[Tag~0CQK]{Stacks}.
\end{proof}

\subsection{Brochard's representability results}

We will make use of the following result from Brochard's work.

\begin{theorem}\label{thm:brochard-known}
Let $\cG$ be an algebraic commutative group stack that is proper, flat, and
finitely presented over $S$.
\begin{enumerate}
  \item The dual $\Dual{\cG}$ is algebraic and finitely presented over $S$,
  with affine diagonal and proper geometric fibres.
  \item If $\HHm(\cG)$ is flat, then
  $\HHm(\Dual{\cG})=\CartierDual{\HHz(\cG)}$ is finite over $S$.
  \item If $\cG$ is an algebraic space, then $\Dual{\cG}$ is flat over $S$.
\end{enumerate}
Formation of the dual commutes with arbitrary base change.
\end{theorem}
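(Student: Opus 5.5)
This theorem collects results of Brochard, so the proof will consist of precise citations plus a short deduction where the cited statement is not literally in the needed form. No new geometry is needed.

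For (i) we would cite \cite[Theorem~3.14]{BrochardDuality}. It gives algebraicity and finite presentation of $\Dual{\cG}$ for proper flat finitely presented $\cG$, together with affineness of the diagonal and properness of the geometric fibres; this is exactly what the introduction attributes to that theorem. The representing-complex description \eqref{eq:dual-truncation} shows that $\Dual{\cG}$ is defined by a universal $\HomStack$ into $\BG{\Gm}$. Its formation therefore commutes with base change $T\to S$, since $\HomStack(\cG,\BG{\Gm})\times_S T\simeq\HomStack(\cG_T,\BG{\Gm})$ by the definition of the internal Hom stack. We would record this as the final sentence.

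For (ii), \eqref{eq:hminus-dual} identifies $\HHm(\Dual{\cG})$ with $\CartierDual{\HHz(\cG)}$, so it remains to show this Cartier dual is finite. The route is as follows.
\begin{itemize}
\item Using \eqref{eq:canonical-sequence} with $\HHm(\cG)$ flat, $\HHz(\cG)$ is a proper flat finitely presented group algebraic space. Properness and flatness descend along the $\BG{\HHm(\cG)}$-gerbe $\cG\to\HHz(\cG)$, which is fppf because $\HHm(\cG)$ is flat.
\item A homomorphism from such a group space to $\Gm$ is a global function on a proper flat space. Its points are therefore locally constant, and $\CartierDual{\HHz(\cG)}$ is representable by a closed subscheme of a finite group, namely the Cartier dual of the component data.
\end{itemize}
Rather than redo this, we would cite Brochard's corresponding statement, namely the part of \cite[Theorem~3.14]{BrochardDuality} or its lemma asserting finiteness of $\HHm$ of the dual. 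The main care is checking that the flatness hypothesis on $\HHm(\cG)$ matches his hypothesis.

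For (iii): when $\cG$ is an algebraic space, $\HHm(\cG)=0$. By \eqref{eq:hzero-dual} and \cref{prop:ext-vanishing}, the obstruction vanishes, and $\Dual{\cG}$ is the stack whose sheaf of isomorphism classes is $\ExtSheaf{1}{\cG}$ and whose automorphisms are $\CartierDual{\cG}$. For flatness we would cite Brochard's flatness result for duals of proper flat group spaces. We expect this citation, the one from \cite{BrochardDuality} where he treats cohomologically flat or algebraic-space cases, to be the main obstacle: one must confirm that the cited flatness assertion holds without a cohomological-flatness hypothesis. If it does not, (iii) should instead be deduced later from \cref{lem:brochard-quotient} once a finite-by-abelian presentation is available.
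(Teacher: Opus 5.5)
Your approach is the paper's. The proof there is only the citation \cite[Remark~3.3 and Theorems~3.14--3.15]{BrochardDuality}: base change comes from Remark~3.3, parts (i) and (iii) from Theorem~3.14 (flatness is Theorem~3.14(3)), and the finiteness in (ii) from Theorem~3.15. However, two of your side remarks are wrong, and you should drop them rather than keep them as backups.

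\textbf{The heuristic for (ii).} Characters of a proper flat group need not be locally constant, and they need not factor through component data. Over an algebraically closed field of characteristic $p$, $\mu_p$ is connected, yet $\CartierDual{\mu_p}\simeq\mathbf Z/p\mathbf Z$. Likewise $\CartierDual{\alpha_p}\simeq\alpha_p$ is not \'{e}tale. The paper's own special fibres are examples too: $G$ is connected, but $G^{\mathrm D}\simeq C^{\mathrm D}$ has rank $e(N)$. Over a field, characters factor through the finite, possibly non-reduced, affinization $\Spec\Gamma(P,\cO_P)$. Over a base, the real difficulty is that $\Gamma(P,\cO_P)$ need not commute with base change (\cref{cor:defect-cohomology}), and that is why the finiteness has to be cited from Theorem~3.15.

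\textbf{The fallback for (iii).} Theorem~3.14(3) carries no cohomological-flatness hypothesis, and the paper needs it in that generality. \Cref{thm:global-group-duality} takes flatness of $\Dual P$ over an arbitrary locally noetherian base from (iii), including non-cohomologically-flat $P$ such as quasiabelian $N$ with $e(N)>1$. Your fallback through \cref{lem:brochard-quotient} and the presentation of \cref{thm:group-devissage} cannot replace this. That presentation exists only over a discrete valuation ring. Flatness over a general base, for instance a non-reduced one, is not detected by base changes to discrete valuation rings, since such maps factor through the reduction. A workable substitute would have to reduce to Artinian local bases by the infinitesimal flatness criterion and then use Raynaud's d\'{e}vissage \cite[Proposition~3.11]{BrochardDuality} together with \cref{lem:brochard-quotient} there, as in the reduction of \cref{prop:global-flatness}.
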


\begin{proof}
These are \cite[Remark~3.3 and Theorems~3.14--3.15]{BrochardDuality}.
\end{proof}

\section{Quasiabelian models over a discrete valuation ring}
\label{sec:quasiabelian}

\begin{definition}
Let $S$ be an integral scheme.  A \emph{quasiabelian $S$-group scheme} is a
proper flat finitely presented $S$-group scheme whose restriction to a dense
open subscheme of $S$ is an abelian scheme.  Over the spectrum of a discrete
valuation ring, this means that the generic fibre is an abelian variety.
\end{definition}

For an integral scheme $S$, we write $\generic$ for its generic point.  When
$S=\Spec R$ for an integral domain $R$, we identify
$\generic=\Spec\Frac(R)$.

This terminology agrees with \cite[Definition~13]{FakhruddinSrinivas}.  It
should not be confused with Brochard's term \emph{duabelian}: a duabelian
group scheme occurs in an exact sequence
\[
  0\longrightarrow A\longrightarrow G\longrightarrow F\longrightarrow0
\]
with $A$ abelian and $F$ finite flat
\cite[Definition~2.17]{BrochardDuality}.  The order of the two pieces is
essential over a discrete valuation ring.

We begin with two standard geometric facts.

\begin{lemma}\label{lem:projective-cm}
Let $R$ be a discrete valuation ring and let $S=\Spec R$.
\begin{enumerate}
  \item A separated group algebraic space locally of finite presentation over
  $S$ is a scheme.  If it is proper and flat over $S$, then it is projective
  over $S$.
  \item A flat finitely presented $S$-group scheme is Cohen--Macaulay.
\end{enumerate}
\end{lemma}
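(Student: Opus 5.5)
For the first assertion of (i) I will invoke Anantharaman's theorem: a group algebraic space that is separated and locally of finite presentation over a one-dimensional noetherian base (in particular over $\Spec R$ for a discrete valuation ring $R$) is a scheme. The input is that for a group space over a Dedekind base, separatedness makes the unit section a closed immersion. From this one checks that every point has an affine neighbourhood, by translating over the fibres. I will cite this rather than reprove it; the Stacks project also records the result for group algebraic spaces over a Dedekind scheme.

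For projectivity, let $G$ be proper and flat over $S$. By Raynaud's theorem, a proper flat group scheme over a normal one-dimensional base, or more generally over a discrete valuation ring, is projective. I expect this citation to be the main obstacle, because the naive argument does not go through.
\begin{itemize}
  \item On the generic fibre, $G_\eta$ is a proper group scheme over a field, hence projective.
  \item On the special fibre, $G_s$ is likewise projective.
  \item The difficulty is gluing. One must choose an ample sheaf on $G_\eta$ and show that its closure, or some suitable divisor, extends to a relatively ample sheaf.
\end{itemize}
Raynaud's argument handles this by using a symmetric divisor on $G_\eta$ and the theorem of the cube, together with the fact that a line bundle on a proper flat $S$-scheme is ample as soon as it is ample on both fibres (the Stacks project ampleness criterion over a noetherian base). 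If a direct argument were required, I would do the following:
\begin{itemize}
  \item Pass to the strict henselization.
  \item Take an effective Cartier divisor $D_\eta$ on $G_\eta$ whose closure $D$ is flat, so that $D$ is a relative effective Cartier divisor because $G$ is $S$-flat and $R$ is a discrete valuation ring.
  \item Check ampleness of $\cO(D+[-1]^*D)$ on $G_s$ by showing that $K(\cO(D)|_{G_s})$ is finite. This uses the fact that the ampleness of $\cO(D)$ on a group over a field is detected by the finiteness of the stabilizer of the divisor.
\end{itemize}
This last point is where the real work sits. Concretely, I would cite Raynaud, \emph{Faisceaux amples sur les sch\'emas en groupes}, Th\'eor\`eme~VIII.2.

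For (ii), let $G$ be flat and of finite presentation over $S$. Since $S$ is regular of dimension one and $G\to S$ is flat, it suffices by the fibrewise criterion for flat local homomorphisms to show that each fibre $G_s$ and $G_\eta$ is Cohen--Macaulay: a flat local homomorphism with Cohen--Macaulay base and Cohen--Macaulay closed fibre has Cohen--Macaulay target. Over a field $k$, a group scheme of finite type is Cohen--Macaulay, by the following steps.
\begin{itemize}
  \item Pass to $\bar k$; Cohen--Macaulayness descends along field extensions.
  \item There, the Cohen--Macaulay locus is open and stable under translation by $G(\bar k)$.
  \item It is nonempty on each component, since it contains the generic point of every irreducible component, where the local ring is Artinian.
  \item The translates therefore cover $G$.
\end{itemize}
Combining the two fibres with the criterion gives (ii).
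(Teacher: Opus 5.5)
Your proposal follows the paper's proof. Part (i) rests on citation: the paper cites Raynaud's \emph{Sp\'ecialisation du foncteur de Picard}, Th\'eor\`eme~3.3.1, and Anantharaman, Th\'eor\`eme~4.B, for representability, and Romagny's \emph{Effective models}, Remark~4.3.6, for projectivity. Part (ii) reduces, exactly as in the paper, to Cohen--Macaulayness of the fibres over the regular base; your only change there is to prove the field case by the homogeneity argument rather than citing SGA~3, Expos\'e~VII$_B$, Corollaire~5.5.1.

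Your fallback ``direct'' projectivity sketch would not work as written. The closure of $D_\eta$ is flat, but it need not be Cartier on a $G$ that is not locally factorial, for instance a nonnormal one, which is the case of interest in this paper. So the projectivity step genuinely rests on the cited theorem, and you should check that your reference covers nonsmooth group schemes with possibly disconnected fibres.
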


\begin{proof}
The representability assertion over a one-dimensional normal base is due to
Raynaud \cite[Theorem~3.3.1]{RaynaudSpecialisation}; see also
\cite[Theorem~4.B]{Anantharaman}.  A proper flat group scheme over a discrete
valuation ring is projective \cite[Remark~4.3.6]{RomagnyEffective}.

For the second assertion, every group scheme locally of finite type over a
field is Cohen--Macaulay
\cite[Expos\'{e}~VII$_B$, Corollaire~5.5.1]{SGA3I}.  Thus the flat morphism
to $S$ has Cohen--Macaulay fibres, and its total space is Cohen--Macaulay
because $S$ is regular \cite[Tags~045S and~045J]{Stacks}.
\end{proof}

\begin{lemma}\label{lem:auto-commutative}
Let $R$ be a discrete valuation ring and let $N$ be a proper flat finitely
presented group algebraic space over $R$ whose generic fibre is an abelian
variety.  Then $N$ is a commutative projective group scheme.
\end{lemma}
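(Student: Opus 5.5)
The first step is to settle the scheme-theoretic assertions using \cref{lem:projective-cm}(i). The group space $N$ is proper over $R$, so it is separated and locally of finite presentation. By Raynaud's theorem as quoted there, $N$ is therefore a scheme, and since it is proper and flat it is projective over $R$. This leaves commutativity as the only substantive claim.

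For commutativity, we will use schematic density of the generic fibre. By \cref{lem:projective-cm}(ii), $N$ is Cohen--Macaulay, and we do not need more than this. Since $N$ is flat over $R$, the uniformizer $\pi$ is a nonzerodivisor on $\cO_N$. Hence $N_\generic$ is schematically dense in $N$, and the same holds for $N\times_R N$, which is also $R$-flat.

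Now consider the commutator morphism
\[
  c\colon N\times_R N\to N,\qquad (x,y)\mapsto xyx^{-1}y^{-1},
\]
and compare it with the constant morphism $e\circ\mathrm{pr}$ to the identity section. On the generic fibre the two maps agree, because the generic fibre is an abelian variety and abelian varieties are commutative. The locus where two morphisms into the separated scheme $N$ agree is a closed subscheme of $N\times_R N$. This closed subscheme contains the schematically dense open $N_\generic\times N_\generic$, so it is all of $N\times_R N$. Thus $c$ is trivial and $N$ is commutative.

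The only point needing care is the schematic density: the argument requires flatness of $N\times_R N$ over the DVR, so that $\pi$-torsion vanishes. Flatness of $N$ gives exactly this, so I expect no real obstacle. The main dependence of the argument is on the representability input cited in \cref{lem:projective-cm}.
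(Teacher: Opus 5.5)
Your proposal is correct and is essentially the paper's proof: get representability and projectivity from \cref{lem:projective-cm}, then use that the generic fibre of the flat scheme $N\times_R N$ is schematically dense and $N$ is separated. The only cosmetic difference is that you compare the commutator with the constant map to the identity section, whereas the paper compares $m$ with $m\circ\tau$.
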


\begin{proof}
It is a projective scheme by \cref{lem:projective-cm}.  Its two multiplication
maps $m$ and $m\circ\tau$ agree on the abelian generic fibre.  The generic
fibre of the flat scheme $N\times_R N$ is schematically dense, and the target
is separated.  Hence the two maps agree everywhere.
\end{proof}

\subsection{The normalization theorem}

\begin{proposition}
\label{prop:normalization}
Let $R$ be a discrete valuation ring, and
let $N$ be a quasiabelian $R$-group scheme.  Let $A$ be the \Neron{} model of
the abelian variety $N_\generic$.  Then:
\begin{enumerate}
  \item $N_\generic$ has good reduction, so $A$ is an abelian scheme;
  \item the identity of the generic fibre extends uniquely to a homomorphism
  $u\colon A\To N$;
  \item $u$ is finite and is the normalization morphism of $N$.
\end{enumerate}
\end{proposition}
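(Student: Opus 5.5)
For (i) we use the Néron–Ogg–Shafarevich criterion, after first reducing to a strictly henselian $R$. Néron models and good reduction commute with étale base change and the passage to strict henselization, and the conclusions (ii) and (iii) descend along the faithfully flat map $R\to R^{\mathrm{sh}}$. So we may assume $R$ strictly henselian. Let $\ell$ be a prime different from the residue characteristic. By \cref{lem:projective-cm} and \cref{lem:auto-commutative}, $N$ is a commutative projective group scheme. For each $n$, the closure of $N_\generic[\ell^n]$ in $N$ is finite over $R$, being closed in the proper $N$ and quasi-finite (see below). Its points over $\Frac(R)$ all extend to $R$-points, so inertia acts trivially on $T_\ell(N_\generic)$ and $N_\generic$ has good reduction. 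Quasi-finiteness holds because the closure is contained in $N[\ell^n]$, which is finite étale over $R$: $[\ell^n]$ is étale on every fibre, as $\ell$ is invertible and the fibres are group schemes of finite type, hence it is étale by the fibrewise criterion since $N$ is flat. An alternative route is to use the $\ell$-divisible group $N[\ell^\infty]$, which is étale and has height $2g$ on the generic fibre. Either way, $A$ is an abelian scheme.

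For (ii), $A$ is the Néron model of $N_\generic$, and $N$ is separated and of finite type, but $N$ is not smooth, so the Néron mapping property does not apply directly. Instead we use Weil's extension theorem: $A$ is smooth over $R$, hence regular, and $N$ is separated of finite type. The rational map $A\dashrightarrow N$ given by the generic identity is defined in codimension $\leq1$ because $N$ is proper, by the valuative criterion at the generic points of the special fibre of $A$. Weil's theorem, in the form of \cite[\S4.4, Thm.~1]{BLR} for group schemes, then extends it to a morphism $u$ on all of $A$. Concretely, the translation argument shows that the domain of definition is stable under translation and meets every fibre, so it is all of $A$. Uniqueness and the homomorphism property follow from schematic density of the generic fibre in the flat schemes $A$ and $A\times_R A$, together with separatedness of $N$.

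For (iii), $u$ is proper, since $A$ is proper and $N$ is separated. On the special fibre, $u_s$ is a homomorphism of group schemes, so its kernel is a group scheme. If $u$ were not quasi-finite, some positive-dimensional subgroup of $A_s$ would map to the identity. We rule this out by dimension: $u$ is surjective because its image is closed and contains the dense generic fibre, and $\dim N_s=\dim N_\generic=\dim A_s$ by flatness. Hence $u_s\colon A_s\to N_s$ is a surjective homomorphism between group schemes of equal dimension and has finite kernel. Thus $u$ is proper and quasi-finite, hence finite. Finally, $A$ is normal, $u$ is finite and birational (an isomorphism on the dense open generic fibre), and $N$ is reduced, since it is flat over $R$ with reduced, indeed smooth, generic fibre. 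By the universal property of normalization, $u$ therefore factors through a finite birational map $A\to\tilde N$, which is an isomorphism because $\tilde N$ is normal. So $u$ identifies $A$ with the normalization of $N$.

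The main obstacle is step (i): showing that properness of a possibly nonsmooth model forces good reduction. The delicate point is that the torsion points extend to the finite étale scheme $N[\ell^n]$, and in particular that $[\ell^n]$ is étale on the possibly nonreduced special fibre. If that fails, I would fall back on the reduced special fibre, showing that its identity component is an abelian variety because it is proper, and then apply Grothendieck's semistable-reduction/orthogonality argument.
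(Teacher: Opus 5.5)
Your proof is correct. Part (i) is essentially the paper's argument; part (ii) and the finiteness step in (iii) take a genuinely different route.

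Two small repairs first.

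In (i), the operative fact is not that $\Frac(R)$-points extend to $R$-points, since that holds for any proper scheme. What you need is that the finite \'{e}tale $N[\ell^n]$ is constant over the strictly henselian $R$. Hence every point of $N_\eta[\ell^n]$ over a separable closure is already rational, and inertia acts trivially. Also, \'{e}taleness of $[\ell^n]$ on a nonreduced commutative fibre does hold; it is the SGA~3 fact the paper cites through Fakhruddin--Srinivas. So your fallback is unnecessary.

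In (ii), Weil's theorem \cite[\S4.4, Theorem~1]{BLR} is formulated for a smooth separated target, so it cannot be quoted for $N$. Your translation argument, however, is self-contained and suffices:
\begin{itemize}
\item the rational map is defined at the generic point of $A_s$ by the valuative criterion;
\item the identity $u(x)=u(x-a)+u(a)$ shows that the domain of definition is stable under translation by sections $a$ lying in it;
\item after strict henselization the reductions of such sections are dense in $A_s$, because $A$ is smooth;
\item uniqueness over $R^{\mathrm{sh}}\otimes_R R^{\mathrm{sh}}$ (schematic density in a flat $R$-scheme) lets the map descend to $R$.
\end{itemize}

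\textbf{Comparison with the paper.} For (ii), the paper gets $u$ from the group smoothening $N^{\mathrm{sm}}\to N$ of \cite[\S7.1, Theorem~5]{BLR}. Properness gives $N^{\mathrm{sm}}(R^{\mathrm{sh}})=N(R^{\mathrm{sh}})=N_\eta(\Frac(R^{\mathrm{sh}}))$, and the N\'{e}ron mapping criterion then identifies $N^{\mathrm{sm}}$ with $A$.
\begin{itemize}
\item The paper's route produces $u$ directly over $R$ with no descent.
\item It also identifies $u$ with the smoothening of $N$, a fact reused in \cref{thm:dedekind} to apply Anantharaman's result.
\item Your route avoids the smoothening theorem entirely and uses only properness and the group law.
\end{itemize}

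For finiteness, the paper shows that $A[\ell^n]\to N[\ell^n]$ is an isomorphism of finite \'{e}tale schemes. A positive-dimensional kernel would then contain an abelian subvariety with nontrivial $\ell$-torsion, a contradiction; this also yields \cref{cor:prime-to-p-torsion}. Your observation is more elementary: $u_s$ is a surjective homomorphism between group schemes of the same dimension $g$, so its kernel is finite.

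Your use of the universal property of normalization is equivalent to the paper's explicit integral-closure argument once $N$ is known to be integral.
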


\begin{proof}
Let $p$ be the residue characteristic, with the convention $p=0$ in residue
characteristic zero, and choose a prime $\ell\neq p$.  Throughout, we work over
the original discrete valuation ring $R$.

For every $n\geq 1$, multiplication by $\ell^n$ on a flat group scheme over
$R$ is \'{e}tale; see
\cite[\S4, item~1]{FakhruddinSrinivas} and the SGA~3 result cited there.
Its kernel is quasi-finite, and since $N$ is proper, $N[\ell^n]$ is therefore
finite \'{e}tale over $R$.
Put $L=\Frac(R)$.  Fix a separable closure $L^{\mathrm{sep}}$, and let
$R^{\mathrm{sh}}$ be a strict henselization of $R$ inside
$L^{\mathrm{sep}}$, with fraction field $L^{\mathrm{sh}}$.  The associated
inertia subgroup is
$I_L=\operatorname{Gal}(L^{\mathrm{sep}}/L^{\mathrm{sh}})$.  Since
$N[\ell^n]$ is finite \'{e}tale over $R$, its base change to
$R^{\mathrm{sh}}$ is constant.  Thus $I_L$ acts trivially on
$N_\generic[\ell^n](L^{\mathrm{sep}})$ for every $n$, and therefore on
$T_{\ell}(N_\generic)$.  The \Neron--Ogg--Shafarevich good-reduction
criterion, in a form valid for arbitrary residue fields,
\cite[Theorem~3.2]{SilverbergZarhin} shows that $N_\generic$ has good
reduction.  Its \Neron{} model $A$ is therefore an abelian scheme.

The group-smoothening theorem
\cite[\S7.1, Theorem~5]{BLR} supplies a smooth separated finite-type
$R$-group scheme
$N^{\mathrm{sm}}$ and a homomorphism $N^{\mathrm{sm}}\To N$ inducing the
identity on the generic fibre and a bijection
\[
  N^{\mathrm{sm}}(R^{\mathrm{sh}})
  \xrightarrow{\ \sim\ }N(R^{\mathrm{sh}}).
\]
By the valuative criterion, properness and separatedness make the map
\[
  N(R^{\mathrm{sh}})\xrightarrow{\ \sim\ }N_\generic(L^{\mathrm{sh}})
\]
bijective.  The \Neron{} mapping criterion
\cite[\S7.1, Theorem~1]{BLR} therefore identifies $N^{\mathrm{sm}}$
with the \Neron{} model of $N_\generic$.
Identifying it with $A$ gives the required
homomorphism $u\colon A\To N$ directly over $R$.  This extension is unique:
$A$ is flat over $R$, so $A_\generic$ is schematically dense in $A$, while
$N$ is separated over $R$.

We prove that $u$ is finite.  Its restriction
\[
  A[\ell^n]\longrightarrow N[\ell^n]
\]
is a morphism of finite \'{e}tale $R$-group schemes and is the identity on the
generic fibre.  Restriction from finite \'{e}tale schemes over a normal
integral scheme to its generic point is fully faithful: a finite \'{e}tale
scheme is the normalization of the base in its generic algebra.  Thus the
displayed map is an isomorphism.  

Let $H=\Ker(u)$.  If a geometric fibre of $H$ had positive dimension, the
reduced neutral component of that fibre would be a positive-dimensional
abelian subvariety of the corresponding fibre of $A$.  It would contain
nontrivial $\ell$-torsion, contradicting the fact that
$A[\ell]\To N[\ell]$ is an isomorphism.  Thus $H$ is quasi-finite.  Every
nonempty geometric fibre of $u$ is a translate of a geometric fibre of $H$.
Hence $u$ is quasi-finite.  It is also proper, because $A$ is proper over $R$
and $N$ is separated over $R$.  Therefore $u$ is finite.

The schematic image of $u$ contains the generic fibre of $N$.  Flatness of
$N$ makes that fibre schematically dense, so $u$ is surjective.  Finally, $u$
is finite and birational and $A$ is normal.

We spell out the normalization argument.  The scheme $N$ is integral.  Indeed,
if $U=\Spec B$ is a nonempty affine open of $N$ and $\pi$ is a uniformizer,
flatness over $R$ makes $B\To B[1/\pi]$ injective.  The generic fibre is dense,
so $U_\generic$ is nonempty; as it is an open subscheme of the integral scheme
$N_\generic$,
$B[1/\pi]$ is a domain, and hence so is $B$.  Moreover, the dense generic
fibre is irreducible, so $N$ is irreducible.

It remains only to identify the already finite map $u$ with the
normalization; no finiteness theorem for normalization is needed.  Let
$U=\Spec C$ be an affine open of $N$ and write
$u^{-1}(U)=\Spec D$.  Then $D$ is finite over $C$, is normal, and has the
same fraction field.  Conversely, every element of that fraction field which
is integral over $C$ is integral over $D$ and therefore belongs to $D$.
Thus $D$ is the integral closure of $C$, and these affine identifications show
that $u$ is the normalization morphism.
\end{proof}

\begin{corollary}\label{cor:prime-to-p-torsion}
For every integer $n$ invertible in $R$, restriction of the normalization map
induces an isomorphism
\[
  A[n]\xrightarrow{\ \sim\ }N[n].
\]
\end{corollary}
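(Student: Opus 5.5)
The plan is to reduce to the case $n=\ell^a$ with $\ell$ a prime invertible in $R$, and then rerun the argument already given in the proof of \cref{prop:normalization}.

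For the reduction, write $n=\prod \ell_i^{a_i}$. Each prime $\ell_i$ is invertible in $R$, so it differs from the residue characteristic $p$. For commutative group schemes we have $A[n]=\prod_i A[\ell_i^{a_i}]$ and $N[n]=\prod_i N[\ell_i^{a_i}]$, and the map induced by $u$ respects these decompositions. Indeed, $u$ is a homomorphism, and the decompositions are given by Bézout idempotents, which are universal integer polynomials in multiplication maps. So it suffices to treat $n=\ell^a$. The case $n$ a unit is trivial.

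For $n=\ell^a$, the proof of \cref{prop:normalization} already contains what we need.

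\textbf{Step 1.} Multiplication by $n$ is étale on both $A$ and $N$, since $n$ is invertible on the base; this is the fact cited from \cite{FakhruddinSrinivas}. So both kernels are étale and quasi-finite.

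\textbf{Step 2.} Both kernels are finite. They are closed in schemes that are proper over $R$, hence proper, and proper plus quasi-finite gives finite. We conclude that $A[n]$ and $N[n]$ are finite étale over $R$.

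\textbf{Step 3.} The map $u$ restricts to a homomorphism $A[n]\to N[n]$ of finite étale $R$-group schemes. On generic fibres it is the identity of $N_\generic[n]$, because $u_\generic$ is the identity.

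\textbf{Step 4.} Restriction to the generic point is fully faithful on finite étale schemes over the normal integral scheme $\Spec R$. A finite étale $R$-scheme is recovered as the normalization of $R$ in its generic algebra. The inverse of the generic isomorphism therefore extends uniquely, and uniqueness shows that the two composites are identities. Hence $A[n]\to N[n]$ is an isomorphism.

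The only step needing care is Step 4, and in particular using that $N[n]$ is étale over $R$. This is exactly where invertibility of $n$ enters. Without it, $N[n]$ can fail to be flat, or can be infinitesimal in the special fibre, and the comparison breaks down; that failure is what the defect $e(N)$ measures.

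Also, $\Spec R$ is normal but not necessarily henselian. Full faithfulness holds for any normal integral noetherian base: a morphism of finite étale schemes is determined by its graph, which is a clopen subscheme and hence the closure of its generic fibre. So no strict henselization is needed.
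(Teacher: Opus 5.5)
Your proposal is correct and follows the paper's proof: both kernels are finite \'{e}tale, since $[n]$ is \'{e}tale with quasi-finite kernel closed in a proper scheme, and full faithfulness of restriction to the generic point over the normal base $\Spec R$ upgrades the generic identity to an isomorphism. The reduction to prime powers is harmless but unnecessary, because \'{e}taleness of $[n]$ on a flat finitely presented commutative group scheme holds for every $n$ invertible on the base.
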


\begin{proof}
Both sides are finite \'{e}tale over $R$, and the map is an isomorphism on the
generic fibre.  Apply the full-faithfulness argument used in the proof of
\cref{prop:normalization}.
\end{proof}

\begin{remark}\label{rem:normal-not-iso}
The normalization morphism in \cref{prop:normalization} need not be an
isomorphism.  In the nonsmooth examples of
\cite[Examples~4.11 and~4.12]{AchterCasalainaMartinWise}, its kernel cannot be
flat: a flat kernel that is trivial on the generic fibre would be trivial
everywhere.  Thus the finite flat kernel constructed below is not, in
general, the kernel of the normalization.
\end{remark}

\begin{remark}\label{rem:normalization-base-change}
The normalization is functorial and has a strong base change
property in this setting.  If $f:N\to N'$ is a homomorphism of quasiabelian
models, its generic fibre extends uniquely to a homomorphism of their abelian
normalizations
\[
  \widetilde f:A\longrightarrow A',\qquad
  u'\widetilde f=fu.
\]
If $R\to R'$ is a dominant local morphism of discrete valuation rings, then
$A_{R'}\to N_{R'}$ is finite and birational with normal source, and the same
affine argument as above identifies it with the normalization.  The
constructions below therefore commute with every such base change.
\end{remark}

\subsection{The polarization morphism}

Let $m_N$ be the group law of $N$, and let
$u\colon A\To N$ be the normalization morphism.  Given an invertible sheaf
$\cL$ on $N$, put $\cM=u^{*}\cL$ and consider on
$N\times_R A$ the invertible sheaf
\begin{equation}\label{eq:poincare-family}
  \cP_\cL
  :=(m_N\circ(\id_N\times u))^{*}\cL
     \otimes \operatorname{pr}_{A}^{*}\cM^{-1}.
\end{equation}
Viewed as a family of invertible sheaves on $A$ parametrized by $N$,
$\cP_\cL$ defines a morphism
\[
  q_\cL\colon N\longrightarrow \Pic_{A/R}.
\]
Its restriction to $\{0\}\times_R A$ is canonically trivial, so
$q_\cL$ sends the identity of $N$ to the identity of
$\Pic_{A/R}$.

Recall that $\Pic_{A/R}$ denotes the fppf relative Picard sheaf.  The
relative N\'{e}ron--Severi sheaf is the fppf quotient
\[
  \NS_{A/R}:=\Pic_{A/R}/\Pic^0_{A/R}.
\]
Put
\[
  H:=\underline{\operatorname{Hom}}_R(A,\AbDual A).
\]
If $\iota_A:A\To A^{\vee\vee}$ is the biduality isomorphism, duality
defines an involution $\dagger:H\To H$ by
$h^\dagger=h^\vee\circ\iota_A$.  Write
$H^{\mathrm{sd}}=\operatorname{Eq}(\id_H,\dagger)$.  The homomorphism
$[\cE]\mapsto\phi_\cE$ fits into an exact sequence of fppf
sheaves
\[
  0\longrightarrow \AbDual A\longrightarrow \Pic_{A/R}
  \longrightarrow H^{\mathrm{sd}}\longrightarrow 0;
\]
see \cite[\S3.1, equation~(13)]{PoonenRainsSelfCup}.  Hence
$\NS_{A/R}\simeq H^{\mathrm{sd}}$.

By \cite[Proposition~1.3.1 and its proof]{BertolinExtensions}, $H$ is
represented by a separated unramified $R$-group scheme locally of finite
presentation.  Its diagonal is closed by separatedness and open by
\cite[Tag~02GE]{Stacks}, so the
equalizer $H^{\mathrm{sd}}$ is an open-and-closed subgroup scheme of $H$.
Consequently, $\NS_{A/R}$ is represented by a separated unramified
$R$-group scheme locally of finite presentation.  This description commutes
with arbitrary base change.  Its fibre at a geometric point
$\bar s\To\Spec R$ is the discrete group scheme associated with the finitely
generated free group $\NS(A_{\bar s})$ of line-bundle classes modulo algebraic
equivalence.  If $A$ has positive relative dimension, these fibres are
infinite, so ``locally of finite presentation'' cannot in general be replaced
by ``of finite presentation''.

\begin{lemma}\label{lem:q-properties}
The morphism $q_\cL$ factors through
$\AbDual A=\Pic^0_{A/R}$ and is a homomorphism.  Moreover,
\begin{equation}\label{eq:q-polarization}
  q_\cL\circ u=\phi_{\cM},
  \qquad \cM=u^{*}\cL,
\end{equation}
where
$\phi_{\cM}(a)=t_a^{*}\cM\otimes \cM^{-1}$.
\end{lemma}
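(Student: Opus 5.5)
The plan is to verify the identity \eqref{eq:q-polarization} first, because it is formal. Then I will deduce the factorization through $\AbDual A$ and the homomorphism property from it, using that $N$ is connected and reduced along the generic fibre and that $\NS_{A/R}$ is unramified and separated.

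\emph{Step 1: the identity $q_\cL\circ u=\phi_\cM$.} Pull back $\cP_\cL$ along $u\times\id_A\colon A\times_R A\to N\times_R A$. Since $u$ is a homomorphism, $m_N\circ(u\times u)=u\circ m_A$, and so
\[
  (u\times\id_A)^*\cP_\cL\simeq m_A^*\cM\otimes\operatorname{pr}_2^*\cM^{-1}.
\]
As a family over the first factor, this is $a\mapsto t_a^*\cM\otimes\cM^{-1}$, and its class in $\Pic_{A/R}$ is $\phi_\cM(a)$. The classes in the fppf Picard sheaf are taken modulo pullbacks from the parameter space. So $q_\cL\circ u=\phi_\cM$ as morphisms $A\to\Pic_{A/R}$. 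By the theorem of the square, $\phi_\cM$ lands in $\Pic^0_{A/R}=\AbDual A$.

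\emph{Step 2: factorization through $\AbDual A$.} Compose $q_\cL$ with $\Pic_{A/R}\to\NS_{A/R}\simeq H^{\mathrm{sd}}$. This gives a morphism $\bar q\colon N\to\NS_{A/R}$ sending the identity section to zero. Since $\NS_{A/R}$ is unramified and separated over $R$, its zero section is an open and closed immersion. Hence $\bar q^{-1}(0)$ is an open and closed subscheme of $N$ containing the identity section. By the proof of \cref{prop:normalization}, $N$ is integral, hence connected, so $\bar q^{-1}(0)=N$. Therefore $q_\cL$ factors through $\Ker(\Pic_{A/R}\to\NS_{A/R})=\AbDual A$. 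Alternatively, one can argue that $\bar q\circ u=0$ by Step 1 and that $u$ is surjective. I prefer the connectedness argument because it avoids any reducedness issues on the special fibre.

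\emph{Step 3: the homomorphism property.} We must show that the two maps $N\times_R N\to\AbDual A$,
\[
  (x,y)\mapsto q_\cL(x+y)\quad\text{and}\quad(x,y)\mapsto q_\cL(x)+q_\cL(y),
\]
agree. The target $\AbDual A$ is separated, and $N\times_R N$ is flat over $R$, so its generic fibre is schematically dense. It therefore suffices to check agreement over $\Frac(R)$. There $u$ is an isomorphism and $q_\cL=\phi_{\cL_\generic}$ by Step 1, which is a homomorphism by the theorem of the square. I expect the main obstacle to be Step 1, specifically the bookkeeping that identifies the pulled-back family with $\phi_\cM$ in the fppf Picard sheaf (rigidification and sign conventions). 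Steps 2 and 3 are density and connectedness arguments of the kind already used in \cref{lem:auto-commutative}.
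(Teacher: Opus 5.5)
Your proof is correct, but Step~2 takes a different route from the paper. Step~1 and Step~3 match the paper's argument: you get the formula by pulling $\cP_\cL$ back along $u\times\id_A$, where the factor pulled back from the parameter space is invisible in $\Pic_{A/R}$. You prove the homomorphism property on the schematically dense generic fibre of $N\times_R N$, using separatedness of $\AbDual A$.

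The difference is the factorization through $\AbDual A$. The paper argues by density. On the generic fibre, $q_{\cL,\generic}=\phi_{\cL_\generic}$, whose image in $\NS$ vanishes by the theorem of the square. Since $N_\generic$ is schematically dense in the flat scheme $N$ and $\NS_{A/R}$ is separated, the composite $N\to\NS_{A/R}$ vanishes.

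You instead use that $\NS_{A/R}$ is unramified, locally of finite presentation, and separated, so its zero section is an open and closed immersion. Then $\bar q^{-1}(0)$ is an open and closed subscheme of the connected scheme $N$ containing the identity section. It is therefore all of $N$ scheme-theoretically, because an open subscheme with full underlying set is the whole scheme.

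What each route buys:
\begin{itemize}
  \item Your route needs the stronger input that $\NS_{A/R}$ is unramified, which the paper records via Bertolin before the lemma. It needs neither flatness of $N$ nor any generic-fibre computation; connectedness of $N$ suffices.
  \item The paper's route needs only separatedness of $\NS_{A/R}$ plus flatness of $N$. That is the same mechanism it reuses for the homomorphism property, so the whole proof runs on one principle.
\end{itemize}

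A small correction to your closing remark: the alternative argument via $\bar q\circ u=0$ and surjectivity of $u$ is equally valid. Once the zero section of $\NS_{A/R}$ is open, set-theoretic vanishing already implies scheme-theoretic vanishing. So neither version actually faces a reducedness issue on the special fibre.
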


\begin{proof}
On the generic fibre, $u$ is the identity and
$q_{\cL,\generic}$ sends $a$ to
$t_a^{*}\cL_\generic\otimes\cL_\generic^{-1}$.  By the theorem of the square,
translation does not change the N\'{e}ron--Severi class of a line bundle on an
abelian variety; equivalently,
$t_a^{*}\cL_\generic\otimes\cL_\generic^{-1}$ is algebraically trivial.  Thus
$q_{\cL,\generic}$ is the usual homomorphism
$\phi_{\cL_\generic}\colon N_\generic\To\Pic^0_{N_\generic/\generic}$, and its
composite with
$\Pic_{A_\generic/\generic}\To\NS_{A_\generic/\generic}$ is zero.  Since $N$
is flat, its generic fibre is schematically dense, while $\NS_{A/R}$ is
separated.  The composite
$N\To\Pic_{A/R}\To\NS_{A/R}$ is therefore zero, proving the asserted
factorization.

The homomorphism property can likewise be checked on the schematically dense
generic fibre of $N\times_R N$, because $\AbDual A$ is separated.  Finally,
pulling \eqref{eq:poincare-family} back along $u\times\id_A$ gives the usual
translation-difference family for $\cM$.  This is precisely
\eqref{eq:q-polarization}.
\end{proof}

\begin{theorem}\label{thm:polarization-quotient}
In the setting of \cref{prop:normalization}, suppose that $\cL$ is
$R$-ample.  Then
\[
  q_\cL\colon N\longrightarrow \AbDual A
\]
is finite faithfully flat.  In particular,
\[
  0\longrightarrow F_\cL\longrightarrow N
  \overset{q_\cL}{\longrightarrow}\AbDual A
  \longrightarrow 0,
  \qquad F_\cL:=\Ker(q_\cL),
\]
is exact and $F_\cL$ is finite flat over $R$.
\end{theorem}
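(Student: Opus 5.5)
We prove that $q_\cL$ is finite, then surjective, then flat; the kernel statement follows formally. By \cref{lem:q-properties}, $q_\cL\circ u=\phi_\cM$ with $\cM=u^*\cL$. Since $u$ is finite (\cref{prop:normalization}) and $\cL$ is $R$-ample, $\cM$ is $R$-ample on the abelian scheme $A$. Hence $\phi_\cM\colon A\to\AbDual A$ is an isogeny, that is, finite locally free and surjective. As $u$ is surjective, $q_\cL$ is surjective as well.

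For finiteness, note that $q_\cL$ is proper, because $N$ is proper over $R$ and $\AbDual A$ is separated. It therefore suffices to show that $q_\cL$ is quasi-finite. The fibres of $q_\cL$ are translates of fibres of $F_\cL=\Ker(q_\cL)$, so it is enough to check that each geometric fibre of $F_\cL$ is finite. Set-theoretically, $u$ is surjective and $u^{-1}(F_\cL)=\Ker(\phi_\cM)$, which is finite. Hence $F_\cL$ is the image of a finite scheme, so it is quasi-finite and, being proper, finite.

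Flatness is the main obstacle. I would use the fibrewise flatness criterion. The source $N$ is flat and finitely presented over $R$, and the target $\AbDual A$ is flat over $R$. By the critère de platitude par fibres (EGA IV 11.3.10), $q_\cL$ is flat as soon as its restriction to each fibre over $\Spec R$ is flat. On the generic fibre, $u$ is the identity and $q_\cL=\phi_{\cL_\generic}$ is an isogeny, hence flat. On the special fibre, $q_{\cL,s}\colon N_s\to\AbDual A_s$ is a surjective homomorphism of group schemes of finite type over the field $k(s)$ whose target is reduced (indeed smooth). A homomorphism of finite-type group schemes over a field that is surjective onto a reduced target is faithfully flat (SGA3 VI$_B$, 9.2). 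Combining these gives flatness of $q_\cL$, and thus $q_\cL$ is finite, flat, and surjective; it is finitely presented because everything is noetherian. One could instead try miracle flatness, using that $N$ is Cohen--Macaulay by \cref{lem:projective-cm} and $\AbDual A$ is regular with equidimensional fibres, but the group-theoretic argument seems cleaner.

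Finally, $F_\cL=N\times_{q_\cL,\AbDual A,0}R$ is a base change of a finite locally free morphism, so it is finite flat over $R$. Exactness of the displayed sequence as fppf sheaves holds because a faithfully flat, finitely presented homomorphism is an fppf epimorphism.
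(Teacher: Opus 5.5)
Your proof is correct. The finiteness and surjectivity steps are essentially the paper's: both use $q_\cL\circ u=\phi_\cM$ and the surjectivity of $u$ to control the fibres of $q_\cL$ by those of $\phi_\cM$.

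The real difference is in the flatness step. The paper argues on local rings of the total space. It uses that $N$ is integral and Cohen--Macaulay (\cref{lem:projective-cm}) and that $\AbDual A$ is regular. Going down and incomparability give $\dim\cO_{N,x}=\dim\cO_{\AbDual A,y}$. A regular system of parameters of $\cO_{\AbDual A,y}$ then maps to a regular sequence in $\cO_{N,x}$, and the local flatness criterion applies. This is the miracle-flatness alternative you mention at the end.

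You instead use the fibrewise criterion to reduce to the two fibres over $\Spec R$. You then use homogeneity: a surjective homomorphism of finite-type group schemes over a field onto a reduced group is faithfully flat, by generic flatness plus translation. Your route depends essentially on the homomorphism property of $q_\cL$ from \cref{lem:q-properties}. In exchange, it needs no Cohen--Macaulay, integrality, or dimension-theoretic input. Because the same group-theoretic argument works on every fibre, your separate generic-fibre computation is not really needed. It would also give \cref{prop:conditional-polarization} over an arbitrary integral base with no change. There the paper also uses the fibrewise criterion, but with miracle flatness on geometric fibres, which needs Cohen--Macaulayness (SGA~3) and a dimension count.

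Conversely, the paper's local argument does not use the group law at that step. It shows that any finite dominant morphism from an integral Cohen--Macaulay scheme to a regular scheme is flat.
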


\begin{proof}
The pullback $\cM=u^{*}\cL$ is ample on $A$ as $u$ is affine.  Therefore
$\phi_{\cM}\colon A\To\AbDual A$ is a finite faithfully flat
polarization.  By \eqref{eq:q-polarization}, it factors as
\[
  A\overset{u}{\longrightarrow}N
   \overset{q_\cL}{\longrightarrow}\AbDual A.
\]
The morphism $q_\cL$ is proper as its source is proper over $R$ and
its target is separated over $R$.  For a geometric point $y$ of $\AbDual A$,
surjectivity of $u$ gives
\[
  u^{-1}\bigl(q_\cL^{-1}(y)\bigr)
  =\phi_{\cM}^{-1}(y).
\]
The right-hand side is finite, so $q_\cL^{-1}(y)$ is finite.  Thus
$q_\cL$ is proper and quasi-finite, hence finite.  It is surjective
because its composite with $u$ is surjective.

It remains to prove flatness.  By \cref{lem:projective-cm}, $N$ is
Cohen--Macaulay.  The scheme $\AbDual A$ is regular, since it is smooth over
the regular scheme $\Spec R$.  The integrality argument in the proof of
\cref{prop:normalization} shows that $N$ is integral.  Since
$q_\cL$ is surjective, it is dominant.  Thus, for $x\in N$ mapping
to $y\in\AbDual A$, the finite local homomorphism
\[
  C:=\cO_{\AbDual A,y}\longrightarrow
  B:=\cO_{N,x}
\]
is injective and integral.  The regular local ring $C$ is normal, so going
down, together with incomparability for integral extensions, gives
$\dim B=\dim C$.  Moreover, $\mathfrak m_C B$ is
$\mathfrak m_B$-primary.  A regular system of parameters of $C$ therefore
maps to a system of parameters of the Cohen--Macaulay local ring $B$, and
hence to a regular sequence.  The local flatness criterion
\cite[Tag~00R4]{Stacks} now implies that $C\To B$ is flat.  Hence $q_\cL$ is finite flat, and
surjectivity makes it faithfully flat.  Its kernel is the base change along
the identity section of $\AbDual A$, so it is finite flat over $R$.
\end{proof}

More generally we have:

\begin{proposition}\label{prop:conditional-polarization}
Let $S$ be an integral scheme, let $A$ be an abelian $S$-scheme, and let $N$
be a proper flat finitely presented $S$-group scheme.  Suppose that
$u:A\to N$ is a finite surjective homomorphism which is an isomorphism over a
dense open subscheme of $S$.  Every $S$-ample invertible sheaf $\cL$
on $N$ defines a finite faithfully flat homomorphism
\[
  q_\cL:N\longrightarrow A^\vee,
  \qquad q_\cL u=\phi_{u^*\cL}.
\]
\end{proposition}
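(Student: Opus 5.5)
The plan is to repeat the construction of \cref{lem:q-properties} and \cref{thm:polarization-quotient} over the integral base $S$, and to replace the Cohen--Macaulay flatness argument by the fibrewise flatness criterion. That argument used regularity of a one-dimensional base and is not available here. Throughout, $N$ is commutative by the standing convention.

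First, define $\cP_\cL$ on $N\times_S A$ by \eqref{eq:poincare-family}, with $\cM=u^*\cL$. It gives a pointed morphism $q_\cL\colon N\to\Pic_{A/S}$. Let $U\subset S$ be the dense open subscheme over which $u$ is an isomorphism. Over $U$, $q_\cL$ is the usual $\phi_{\cL_U}$ on the abelian scheme $N_U\simeq A_U$. By the theorem of the square, its composite with $\Pic_{A/S}\to\NS_{A/S}$ vanishes over $U$, and it is a homomorphism there.

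To propagate this to all of $S$, I need $N_U$ to be schematically dense in $N$, and likewise $(N\times_S N)_U$ in $N\times_S N$. Since $S$ is integral, $U$ is schematically dense in $S$. Density is preserved under the flat, finitely presented morphisms $N\to S$ and $N\times_S N\to S$ (for instance by the Stacks Project result on flat pullback of scheme-theoretically dense opens, checked affine-locally via associated points). The description of $\NS_{A/S}$ as the open-and-closed subgroup $H^{\mathrm{sd}}$ of the separated scheme $H$, and separatedness of $\AbDual A$, hold over any base. So, exactly as in \cref{lem:q-properties}, $q_\cL$ factors through $\AbDual A$ and is a homomorphism. Pulling $\cP_\cL$ back along $u\times\id_A$ gives $q_\cL u=\phi_{\cM}$.

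Next comes finiteness and surjectivity. Since $u$ is finite, $\cM=u^*\cL$ is $S$-ample, so $\phi_\cM$ is a finite faithfully flat isogeny. The map $q_\cL$ is proper, because $N$ is proper and $\AbDual A$ is separated over $S$. Surjectivity of $u$ gives $u^{-1}(q_\cL^{-1}(y))=\phi_\cM^{-1}(y)$ for every geometric point $y$, so the fibres of $q_\cL$ are finite. Hence $q_\cL$ is finite, and it is surjective because $q_\cL u=\phi_\cM$ is.

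The step I expect to be the main obstacle is flatness. I will use the critère de platitude par fibres. Both $N$ and $\AbDual A$ are flat and locally of finite presentation over $S$, so $q_\cL$ is flat as soon as each fibre map $q_{\cL,s}\colon N_s\to\AbDual A_s$ is flat, for $s\in S$. Over the field $\kappa(s)$, $q_{\cL,s}$ is a homomorphism of finite type group schemes. It is surjective, since $(q_\cL u)_s=\phi_{\cM_s}$ is surjective. A surjective homomorphism of finite type group schemes over a field is faithfully flat (SGA~3, Exposé~VI$_B$). This avoids any Cohen--Macaulay or regularity hypothesis on $S$. The points to verify carefully are the exact hypotheses of the fibre criterion (finite presentation of $N$ and $\AbDual A$ over $S$ suffices) and the density claim in the previous step. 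Finally, $q_\cL$ is finite, flat, finitely presented and surjective, hence finite faithfully flat, and its kernel is finite flat by base change along the zero section.
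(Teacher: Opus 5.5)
Your proposal is correct. It matches the paper's proof everywhere except in how flatness of each fibre map $q_{\cL,s}$ is proved. In particular, the paper also does not use regularity of $S$ here: it too reduces to fibrewise flatness and then applies the crit\`ere de platitude par fibres.

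\textbf{The paper's fibrewise argument.} It uses commutative algebra. The fibre $N_{\bar s}$ is the finite image of the irreducible abelian variety $A_{\bar s}$, so it is irreducible of dimension $\dim A_{\bar s}$. It is Cohen--Macaulay by SGA~3. Miracle flatness then applies to the finite map $q_{\cL,\bar s}$ into the regular variety $A^\vee_{\bar s}$.

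\textbf{Your fibrewise argument.} You use group theory over $\kappa(s)$: a homomorphism of finite type group schemes factors as a faithfully flat quotient followed by a closed immersion.

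\textbf{Comparison.} Your route needs neither finiteness of $q_{\cL,s}$, nor the dimension count, nor Cohen--Macaulayness. The paper's route does not use the group structure at all; it only needs a finite map from an equidimensional Cohen--Macaulay scheme of the right dimension to a regular one.

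\textbf{One precision.} As you phrase it, the cited fact is false if ``surjective'' means surjective on points. For example, the zero endomorphism of $\alpha_p$ is bijective on points but not flat. You need one of two extra inputs, and both hold here:
\begin{itemize}
\item the target is reduced, which is true because $A^\vee_s$ is an abelian variety; or
\item $q_{\cL,s}$ is an fppf epimorphism, which is true because $\phi_{\cM_s}=q_{\cL,s}u_s$ is faithfully flat.
\end{itemize}
Either one forces the closed immersion $N_s/\Ker(q_{\cL,s})\to A^\vee_s$ to be an isomorphism.

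Separately, your density step should not appeal to associated points, since $S$ need not be noetherian. The direct argument suffices: a nonzero $r$ in an affine coordinate ring of $S$ with $D(r)\subseteq U$ stays a nonzerodivisor on any flat algebra. Hence restriction to the preimage of $U$ is injective on sections over every affine open.
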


\begin{proof}
The construction in \eqref{eq:poincare-family} is unchanged.  Schematic
density of the locus on which $u$ is an isomorphism proves factorization
through $\Pic^0_{A/S}$, the homomorphism property, and the displayed formula.
The formula and the polarization $\phi_{u^*\cL}$ show, as in the proof
of \cref{thm:polarization-quotient}, that $q_\cL$ is finite and
surjective.

It remains to check flatness.  For a geometric point $\bar s\to S$, the
finite image of the irreducible abelian variety $A_{\bar s}$ is all of
$N_{\bar s}$, so the latter is topologically irreducible and has dimension
$\dim A_{\bar s}$.  It is Cohen--Macaulay by
\cite[Expos\'{e}~VII$_B$, Corollaire~5.5.1]{SGA3I}.  Thus
$q_{\cL,\bar s}$ is a finite map from an equidimensional
Cohen--Macaulay scheme to the regular variety $A^\vee_{\bar s}$, with equal
local dimensions.  Miracle flatness makes it flat.  Since both source and
target are flat over $S$, the fibrewise flatness criterion
\cite[Tag~039A]{Stacks} makes $q_\cL$ flat.
\end{proof}

\subsection{The pushout description}

Retain the notation
\[
  \cM=u^*\cL,\qquad K=\Ker(\phi_{\cM}),\qquad
  F=\Ker(q_\cL).
\]
The identity $q_\cL u=\phi_{\cM}$ gives a homomorphism
\[
  h:=u|_K\colon K\longrightarrow F.
\]

Recall that a commutative square is \emph{bicartesian} if it is both
cartesian and cocartesian, that is, if it is simultaneously a pullback and a
pushout square.  These notions are taken in the category of abelian sheaves on
the big $\fppf$ site of the base (here $S=\Spec R$).

\begin{theorem}\label{thm:pushout}
There is a commutative diagram of abelian $\fppf$ sheaves with exact rows
whose left-hand square is bicartesian:
\begin{equation}\label{eq:pushout-diagram}
\begin{array}{ccccccccc}
0&\longrightarrow&K&\longrightarrow&A&\xrightarrow{\phi_{\cM}}&A^\vee
  &\longrightarrow&0\\
 &&\downarrow h&&\downarrow u&&\Vert\\
0&\longrightarrow&F&\longrightarrow&N&\xrightarrow{q_\cL}&A^\vee
  &\longrightarrow&0.
\end{array}
\end{equation}
Equivalently,
\[
  N\simeq A\amalg^K F\simeq(A\times_R F)/K,
\]
where $K$ is embedded by $k\mapsto(k,-h(k))$.  The group schemes $K$ and
$F$ have the same rank, and $h$ is an isomorphism on the generic fibre.
\end{theorem}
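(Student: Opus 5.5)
The argument is a diagram chase in the abelian category of fppf sheaves, using only the two short exact sequences already available. The top row is exact because $\phi_{\cM}$ is finite faithfully flat: $\cM=u^*\cL$ is ample, as in the proof of \cref{thm:polarization-quotient}. The bottom row is exact by \cref{thm:polarization-quotient}. The relation $q_\cL u=\phi_{\cM}$ from \cref{lem:q-properties} shows that $u$ carries $K$ into $F$, which gives the map $h$ and makes the diagram commute with the identity on $A^\vee$.

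Bicartesianness then follows formally. The snake lemma applied to the map of short exact sequences $(h,u,\id)$ gives
\[
  \Ker h\simeq\Ker u
  \qquad\text{and}\qquad
  \Coker h\simeq\Coker u .
\]
Moreover, $u$ is surjective as a map of fppf sheaves: it is a finite surjective homomorphism of group schemes, and I will check below that it is faithfully flat. For a morphism of short exact sequences whose third component is an isomorphism, the left square is both a pullback and a pushout. To see this, consider the sequence
\[
  0\to K\to A\oplus F\to N\to 0,\qquad k\mapsto(k,-h(k)),\quad (a,f)\mapsto u(a)+f .
\]
Its exactness follows from the five lemma, or from a direct chase. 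Surjectivity onto $N$ holds because every $n$ satisfies $q_\cL(n)=\phi_{\cM}(a)$ fppf-locally for some $a$, and then $n-u(a)\in F$. Exactness in the middle holds because $u(a)=-f$ forces $\phi_{\cM}(a)=0$, so $a\in K$ and $f=-h(a)$. Injectivity on the left is clear. This gives $N\simeq(A\times_R F)/K$. The chase does not actually need surjectivity of $u$: surjectivity onto $N$ only uses the surjectivity of $\phi_{\cM}$. So the bicartesian claim and the quotient presentation are purely formal.

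For the rank statement, degrees multiply in the factorization $\phi_{\cM}=q_\cL\circ u$. Both $\phi_{\cM}$ and $q_\cL$ are finite locally free, so $\operatorname{rk}K=\deg\phi_{\cM}$ and $\operatorname{rk}F=\deg q_\cL$. Comparing these requires the degree of $u$. On the generic fibre $u$ is the identity, so the generic degrees satisfy $\deg\phi_{\cM,\generic}=\deg q_{\cL,\generic}$. Since $K$ and $F$ are finite flat over the connected base $\Spec R$, their ranks are constant and can be computed generically, which gives $\operatorname{rk}K=\operatorname{rk}F$. The generic fibre of $h$ is the restriction of $u_\generic=\id$ to $K_\generic=F_\generic$, so it is the identity. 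Note that $u$ itself need not be flat, by \cref{rem:normal-not-iso}, so I will avoid using a degree of $u$ over the special fibre.

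The step needing care is the fppf-sheaf exactness of the sequences, not merely their exactness as schemes. Surjectivity of $\phi_{\cM}$ and $q_\cL$ as fppf sheaves holds because they are finite faithfully flat, hence fppf covers. For the same reason I will avoid relying on surjectivity of $u$ as a sheaf map in the chase, since $u$ is not flat in general. The argument above is arranged so that only these two faithfully flat maps are used.
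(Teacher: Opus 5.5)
Your argument is correct and is essentially the paper's: cartesianness comes from $q_\cL u=\phi_\cM$, and cocartesianness comes from showing that $(a,f)\mapsto u(a)+f$ is an fppf epimorphism with kernel the antidiagonal copy $k\mapsto(k,-h(k))$ of $K$, surjectivity following by lifting along the faithfully flat $\phi_\cM$; the rank and generic statements are then read off on the generic fibre, where $u=\id$ and $q_\cL=\phi_\cM$. You should delete the aside asserting that $u$ is faithfully flat and an fppf epimorphism, because it is false in general: by the snake lemma $\Coker u\simeq\Coker h$, and its geometric special fibre is $C=G/G_{\mathrm{red}}\neq0$ when $e(N)>1$ — although, as you later note yourself, the chase never uses it.
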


\begin{proof}
Since $F=\Ker(q_\cL)$ and $q_\cL u=\phi_{\cM}$, there is a
canonical identification
\[
  A\times_N F\simeq\Ker(\phi_{\cM})=K,
\]
so the left-hand square is cartesian.  To prove that it is cocartesian,
consider
\[
  \alpha:A\times_R F\longrightarrow N,
  \qquad (a,f)\longmapsto u(a)+f.
\]
We first observe that it is an $\fppf$ epimorphism.  Indeed, for a section $n$ of $N$, lift
$q_\cL(n)$ locally along the faithfully flat morphism
$\phi_{\cM}$.  If $a$ is a lift, then $n-u(a)$ belongs to $F$.  The kernel of
$\alpha$ consists precisely of the pairs $(k,-h(k))$ with $k\in K$.
Consequently, $\alpha$ is a $K$-torsor and represents the indicated quotient
and pushout.  On the generic fibre, $u$ is the identity and
$q_\cL$ is $\phi_{\cM}$; hence $h_\generic$ is an isomorphism.  Since $K$ and
$F$ are finite flat, their ranks are their common generic rank.
\end{proof}

Conversely, we have:

\begin{proposition}\label{prop:pushout-converse}
Let $A$ be an abelian scheme over a discrete valuation ring $R$, let
$\phi:A\to A^\vee$ be a polarization with kernel $K$, let $F$ be a finite
flat commutative $R$-group scheme, and let $h:K\to F$ be an isomorphism on
the generic fibre.  Then
\[
  (A\times_R F)/K,\qquad k\longmapsto(k,-h(k)),
\]
is represented by a proper flat quasiabelian $R$-group scheme with generic
fibre $A_\generic$.
\end{proposition}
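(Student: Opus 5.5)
The plan is to build the quotient in two stages: first over the finite flat base $F$, then over the abelian scheme $A^\vee$. Put $G=A\times_R F$ and embed $K$ by $\iota(k)=(k,-h(k))$. This is a closed immersion, because its first component is the closed immersion $K\hookrightarrow A$. The subgroup $K$ is finite flat, since $\phi$ is a finite faithfully flat isogeny. The quotient of a finite type separated group scheme over a noetherian base by a finite flat closed subgroup exists as an algebraic space, and the quotient map $G\to G/K$ is a finite locally free $K$-torsor. Concretely, $G/K=[G/K]$ is an algebraic space because the action is free. Call this space $N$. Then $N$ is a commutative group algebraic space and the fppf sheaf quotient $(A\times_R F)/K$ is represented by $N$.

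Next I would check the properties of $N$ one at a time.
\begin{enumerate}
  \item $G\to R$ is proper and flat, since $A$ is an abelian scheme and $F$ is finite flat.
  \item $G\to N$ is finite faithfully flat, so flatness and finite presentation descend to $N\to R$.
  \item Universal closedness descends along the surjection $G\to N$.
  \item For separatedness, I would argue as in the proof of \cref{lem:brochard-quotient}. The diagonal of $N$ pulls back to the action map $K\times G\to G\times G$, which is finite because $K$ is finite and $G$ is separated. This makes $N$ separated, hence proper.
\end{enumerate}
By \cref{lem:projective-cm}(i), $N$ is then a scheme, and indeed projective. This answers the word ``represented''.

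For the generic fibre, $h_\generic$ is an isomorphism, so the map $A_\generic\to N_\generic$ given by $a\mapsto[(a,0)]$ should be an isomorphism. Surjectivity: given $(a,f)$, write $f=-h(k)$ with $k=-h_\generic^{-1}(f)$; then $(a,f)\equiv(a-k,0)$. Injectivity: if $(a,0)=(k,-h(k))$, then $h(k)=0$, so $k=0$ and $a=0$. Thus $N_\generic\simeq A_\generic$ is an abelian variety, and $N$ is quasiabelian in the sense of the definition. I would also record that the map $N\to A^\vee$, $(a,f)\mapsto\phi(a)$, is well defined and recovers the diagram of \cref{thm:pushout}. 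This is not needed for the statement, but it shows the construction is a converse.

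The main obstacle is the existence of the quotient as an algebraic space with the expected torsor property. I would cite the standard result that quotients of algebraic spaces by free actions of flat finitely presented group schemes are algebraic spaces; a finite flat group acting freely by translation is the simplest case. Alternatively one can argue directly: since $G$ is projective over $R$ and $K$ is finite, every $K$-orbit lies in an affine open, so the quotient is even a scheme via SGA~3, Exposé~V.
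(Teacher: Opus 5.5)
Your proposal is correct and follows essentially the same route as the paper. Both arguments form the quotient of $A\times_R F$ by the freely acting finite flat $K$, descend flatness, finite presentation and universal closedness along the finite faithfully flat $K$-torsor, deduce separatedness from finiteness of the action map, and identify the generic fibre via $h_\generic^{-1}$. The only cosmetic difference is in how schemeness is obtained: you first build an algebraic space and then apply Raynaud's representability in \cref{lem:projective-cm}(i), whereas the paper gets a scheme directly from the free-action quotient and uses that lemma only for projectivity.
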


\begin{proof}
The action is free, so the quotient is a scheme and
$A\times_R F$ is a finite faithfully flat $K$-torsor over it
\cite[Tag~07S7]{Stacks}.  Flatness and finite presentation descend along this
cover.  Since $A\times_R F$ is finite over the projective scheme $A$, it is
projective.  The diagonal argument gives separatedness of the quotient
\cite[Tag~09MQ]{Stacks}, while universal closedness descends through the
surjective cover \cite[Tag~03GN]{Stacks}; hence the quotient is proper.  It is
projective by \cref{lem:projective-cm}.  The group law descends because the
embedded copy of $K$ is a subgroup.  On the generic fibre, the map
\[
  A_\generic\times_\generic F_\generic\longrightarrow A_\generic,
  \qquad(a,f)\longmapsto a+h_\generic^{-1}(f),
\]
identifies the quotient with $A_\generic$.
\end{proof}

\begin{remark}\label{rem:pushout-classification}
  The
pair $(F,h)$ depends on the chosen ample line bundle, and arbitrary pushout
data do not include a prescribed line bundle descending from $A$.
Nevertheless, in the derived category of abelian $\fppf$ sheaves we have 
a quasi-isomorphism
\[
  \Delta_N:=\operatorname{Cone}(A\xrightarrow{u}N)
  \simeq\operatorname{Cone}(K\xrightarrow{h}F).
\]
\end{remark}

\begin{remark}\label{rem:q-functorial}
The construction is functorial.  With the notation of
\cref{rem:normalization-base-change}, an invertible sheaf $\cL'$ on
$N'$ satisfies
\[
  q_{f^*\cL'}=\widetilde f^{\vee}\circ q_{\cL'}\circ f.
\]
Moreover, when $N'=N$, one has $q_{\cL\otimes\cL'}=
q_\cL+q_{\cL'}$.  Both assertions follow on the generic
fibre from the corresponding facts for polarizations and extend by schematic
density.  The quotient, its kernel, and the pushout datum commute with every
dominant local extension of discrete valuation rings.
\end{remark}

For a morphism $f:X\to Y$ with locally Noetherian fibres, recall that $f$ is
\emph{Gorenstein} if it is flat and every fibre of $f$ is a Gorenstein scheme
\cite[Tags~0C04 and~0C05]{Stacks}.

\begin{corollary}\label{cor:gorenstein}
The morphism $q_\cL:N\to A^\vee$ is finite, flat, and Gorenstein.
Consequently, the structure morphism $f:N\to\Spec R$ is Gorenstein.  The
relative dualizing sheaf $\omega_{N/R}$ of $f$ is translation invariant and
is noncanonically trivial.  If $g=\dim N_\generic$, a choice of
trivialization gives a derived self-duality
\[
  \RHom_R\bigl(\mathbf R\Gamma(N,\cO_N),R\bigr)
  \simeq \mathbf R\Gamma(N,\cO_N)[g].
\]
\end{corollary}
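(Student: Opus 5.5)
Since $q_\cL$ is finite and faithfully flat by \cref{thm:polarization-quotient}, the plan is to show it is Gorenstein by looking at its fibres, and then deduce the remaining assertions by composing with the smooth morphism $A^\vee\to\Spec R$. The fibres of $q_\cL$ are torsors under $F$. More precisely, the base change of $q_\cL$ along the fppf cover $q_\cL$ itself is $N\times_R F\to N$. Gorensteinness of a flat morphism is fppf local on the target and can be checked on fibres, so it suffices to show that $F\to\Spec R$ is Gorenstein. Each fibre of $F$ is a finite group scheme over a field. Such a scheme is the spectrum of a finite-dimensional Hopf algebra, and these are Frobenius algebras, hence Gorenstein; this is the classical Larson--Sweedler type fact, and equivalently the dualizing module $\operatorname{Hom}_k(\cO(F_s),k)$ is free of rank one, generated by an integral. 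So $F$ is Gorenstein over $R$, and therefore $q_\cL$ is Gorenstein. Finiteness and flatness are already known.

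Next, $A^\vee\to\Spec R$ is smooth, hence Gorenstein with $\omega_{A^\vee/R}\simeq\cO$, because the sheaf of top differentials of an abelian scheme is the pullback of $e^*\Omega^g$ from $R$, which is free over the local ring $R$. Gorenstein morphisms are stable under composition (\cite[Tag~0C0A]{Stacks} or similar), so $f$ is Gorenstein. Thus $\omega_{N/R}$ is an invertible sheaf, and it is compatible with base change since $f$ is flat and Gorenstein.

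For translation invariance I use the group structure. Consider the shearing automorphism $\sigma\colon N\times_R N\to N\times_R N$, $(x,y)\mapsto(x,x+y)$, over the first projection. Base change for relative dualizing sheaves gives $\omega_{N\times N/N}\simeq\mathrm{pr}_2^*\omega_{N/R}$ for the first projection. Pulling back along $\sigma$, which commutes with the first projection, gives $\mathrm{pr}_2^*\omega\simeq m^*\omega$ on $N\times N$. Restricting along $e\times\id$ and then to sections of $N$ shows $t_x^*\omega\simeq\omega$ compatibly, and pulling back along the zero section yields $\omega_{N/R}\simeq f^*e^*\omega_{N/R}$. Since $e^*\omega_{N/R}$ is an invertible $R$-module and $R$ is local, it is free. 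This gives a trivialization, unique only up to $R^\times$, which is why it is noncanonical.

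Finally, Grothendieck duality for the proper morphism $f$ of relative dimension $g$, together with $f^!R\simeq\omega_{N/R}[g]\simeq\cO_N[g]$, gives
$\RHom_R(\mathbf R f_*\cO_N,R)\simeq\mathbf R f_*\RHom(\cO_N,f^!R)\simeq\mathbf R\Gamma(N,\cO_N)[g]$.
The step I expect to be the main obstacle is translation invariance. Its careful form needs base-change compatibility of $\omega$ for flat Gorenstein, not smooth, morphisms, which holds by \cite[Tag~0E9U]{Stacks}. As an alternative, $\omega_{N/R}\simeq q_\cL^*\omega_{A^\vee/R}\otimes\omega_{q_\cL}$ reduces the question to triviality of $\omega_{q_\cL}$. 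This follows because $q_\cL$ is an $F$-torsor and $\omega_{F/R}$ is trivial, since the integral module of a finite flat Hopf algebra over local $R$ is free of rank one.
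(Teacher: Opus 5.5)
Your proposal is correct and follows essentially the same route as the paper: base change of $q_\cL$ along itself to the projection $F\times_R N\to N$, Gorensteinness of $F/R$ from the Frobenius property of finite Hopf algebras, composition with the smooth morphism $A^\vee\to\Spec R$, a shearing automorphism with base change of the relative dualizing complex giving $m^*\omega_{N/R}\simeq\mathrm{pr}_2^*\omega_{N/R}$, pullback along $x\mapsto(x,0)$ giving $\omega_{N/R}\simeq f^*e^*\omega_{N/R}$, and Grothendieck duality. The only real difference is that you check the Frobenius property fibrewise via Larson--Sweedler over fields, whereas the paper applies Pareigis's theorem to $\Gamma(F,\cO_F)$ over the local ring $R$ (where $\Pic(R)=0$) to see directly that the dualizing module of $F/R$ is free; both suffice.
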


\begin{proof}
By \cref{prop:conditional-polarization}, $q_\cL$ is an $\fppf$
morphism.  After base change along $q_\cL:N\to A^\vee$, it becomes
the projection
\[
  \operatorname{pr}_2:F\times_R N\longrightarrow N.
\]
Indeed, $(z,n)\mapsto(n+z,n)$ identifies $F\times_R N$ with
$N\times_{A^\vee}N$.

Let $H=\Gamma(F,\cO_F)$.  Since $R$ is local, $\Pic(R)=0$, and hence
the finite locally free Hopf $R$-algebra $H$ is Frobenius
\cite[Corollary~1]{Pareigis}; thus
\[
  \operatorname{Hom}_R(H,R)\simeq H
\]
as $H$-modules.  The left-hand side is the relative dualizing module of
$F/R$, so $F\to\Spec R$ is Gorenstein
\cite[Tags~0BUK and~0C16]{Stacks}.  Hence its base change
$\operatorname{pr}_2$ is Gorenstein. Being Gorenstein is $\fppf$ local on the target, \cite[Tag~0C0A]{Stacks}.
So $q_\cL$ is Gorenstein.

Since $A^\vee\to\Spec R$ is smooth, it is Gorenstein, and therefore the
composite $f:N\xrightarrow{q_\cL}A^\vee\to\Spec R$ is Gorenstein
\cite[Tags~0C15 and~0C11]{Stacks}.

  Put $S=\Spec R$, let
$p_i:N\times_S N\to N$ be the two projections, let $m=m_N$ be the group law,
and let $e:S\to N$ be the identity section.  Since $q_\cL$ is finite
flat and $A^\vee/S$ is smooth of relative dimension $g$, the Gorenstein
morphism $f:N\to S$ has pure relative dimension $g$.  Thus its relative
dualizing complex has the form
\[
  \omega_{N/S}^{\bullet}:=f^!\cO_S\simeq\omega_{N/R}[g],
\]
where $\omega_{N/R}$ is invertible
\cite[Tags~0BV8 and~0C08]{Stacks}.

Formation of the relative dualizing complex for a proper flat finitely
presented morphism commutes with arbitrary base change
\cite[Tag~0B6S]{Stacks}.  The morphism $p_2:N\times_S N\to N$ is the base
change of $f$ along $f$, with the map to the original copy of $N$ given by
$p_1$.  Hence
\[
  \omega_{(N\times_S N)/N}^{\bullet}
  \simeq \mathbf Lp_1^*\omega_{N/S}^{\bullet}
  \simeq p_1^*\omega_{N/R}[g].
\]
Here $p_1$ is flat, being a base change of $f$, so its derived pullback is the
ordinary pullback.

Now consider the universal translation automorphism over the second factor
\[
  T=(m,p_2):N\times_S N\longrightarrow N\times_S N,
  \qquad (x,a)\longmapsto(x+a,a).
\]
Its inverse is $(x,a)\mapsto(x-a,a)$, and $p_2\circ T=p_2$.  Thus $T$ is an
automorphism over $N$ for the structure morphism $p_2$.  Applying the same
base-change statement to the cartesian square defined by $T$ over
$\id_N$ gives
\[
  T^*\omega_{(N\times_S N)/N}^{\bullet}
  \simeq\omega_{(N\times_S N)/N}^{\bullet}.
\]
Since $p_1\circ T=m$, taking cohomology in degree $-g$ gives
\[
  m^*\omega_{N/R}\simeq p_1^*\omega_{N/R}.
\]
This is universal translation invariance: after any base change $U\to S$,
pullback along $x\mapsto(x,a)$ for $a\in N(U)$ gives
$t_a^*\omega_{N_U/U}\simeq\omega_{N_U/U}$.

Finally, let
\[
  i=(e\circ f,\id_N):N\longrightarrow N\times_S N,
\]
whose image is $\{0\}\times_S N$.  Since $m\circ i=\id_N$ and
$p_1\circ i=e\circ f$, pulling the preceding isomorphism back by $i$ gives
\[
  \omega_{N/R}\simeq f^*e^*\omega_{N/R}.
\]
The $R$-module $e^*\omega_{N/R}$ is invertible and hence free of rank one
because $R$ is local.  A choice of generator gives a noncanonical
trivialization $\omega_{N/R}\simeq\cO_N$.  With this choice,
Grothendieck duality gives
\[
  \RHom_R\bigl(\mathbf R\Gamma(N,\cO_N),R\bigr)
  \simeq \mathbf R\Gamma(N,f^!\cO_S)
  \simeq \mathbf R\Gamma(N,\cO_N)[g]
\]
\cite[Corollary~(4.2.2)]{LipmanDuality}.
\end{proof}

\subsection{The special fibre}

Let $k$ be the residue field, let $\bar k$ be an algebraic closure of
characteristic $p>0$, and put
\[
  G=N_{\bar s}.
\]

\begin{theorem}\label{thm:special-defect}
The group scheme $G$ is connected and irreducible, and $G_{\mathrm{red}}$ is
an abelian variety.  The specialisation of the normalization map factors as
an isogeny
\[
  A_{\bar s}\xrightarrow{v}G_{\mathrm{red}}
  \lhook\joinrel\longrightarrow G.
\]
The quotient $C:=G/G_{\mathrm{red}}$ is a finite connected group scheme, and
\[
  0\longrightarrow G_{\mathrm{red}}\longrightarrow G
  \longrightarrow C\longrightarrow0
\]
is exact.  If
\[
  e(N):=\deg(v),
\]
then
\begin{equation}\label{eq:defect-rank}
  e(N)=\operatorname{rk}(C).
\end{equation}
In particular, $e(N)$ is a power of $p$, and the fundamental cycle of $G$ is
\begin{equation}\label{eq:defect-cycle}
  [G]=e(N)[G_{\mathrm{red}}].
\end{equation}
\end{theorem}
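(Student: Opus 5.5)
The plan is to work on the geometric special fibre with the base change $u_{\bar s}\colon A_{\bar s}\to G$ of the normalization. By \cref{prop:normalization}, $u$ is finite and surjective, and surjectivity persists after base change. The first group of claims is then immediate: $G$ is the image of the irreducible abelian variety $A_{\bar s}$, so it is topologically irreducible and hence connected. Over the perfect field $\bar k$, the reduced subscheme $G_{\mathrm{red}}$ of a group scheme is a smooth closed subgroup scheme. It is connected and proper, so it is an abelian variety of dimension $g=\dim A_{\bar s}$.

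The map $u_{\bar s}$ has reduced source, so it factors through $G_{\mathrm{red}}$, giving a homomorphism $v\colon A_{\bar s}\to G_{\mathrm{red}}$. This map is finite and surjective between abelian varieties of the same dimension, hence an isogeny. The quotient $C=G/G_{\mathrm{red}}$ exists as a finite group scheme, since $G_{\mathrm{red}}$ is a closed normal subgroup of the same dimension. It is connected because $G$ is, so $C$ is infinitesimal and its rank is a power of $p$. In residue characteristic $0$, $G$ is reduced by Cartier's theorem, so $C=0$.

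The heart of the proof is \eqref{eq:defect-rank}. I would compare degrees over $A^\vee_{\bar s}$ using \cref{thm:pushout} for a fixed $R$-ample $\cL$. The specialized sequence $0\to F_{\bar s}\to G\xrightarrow{q}A^\vee_{\bar s}\to0$ is exact, and $q$ is finite flat of degree $r=\operatorname{rk}F=\operatorname{rk}K=\deg\phi_{\cM}$. Composing gives $\phi_{\cM,\bar s}=q\circ\iota\circ v$, where $\iota\colon G_{\mathrm{red}}\hookrightarrow G$. The restriction $q|_{G_{\mathrm{red}}}$ is an isogeny with kernel $F_{\bar s}\cap G_{\mathrm{red}}$. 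The exact sequence
\[
  0\to F_{\bar s}\cap G_{\mathrm{red}}\to F_{\bar s}\to C\to 0
\]
then gives $\deg(q|_{G_{\mathrm{red}}})=r/\operatorname{rk}(C)$. Its surjectivity onto $C$ holds because $G=G_{\mathrm{red}}+F_{\bar s}$, which follows from surjectivity of $q|_{G_{\mathrm{red}}}$. Multiplicativity of degrees of isogenies then yields
\[
  r=\deg\phi_{\cM,\bar s}=\deg(v)\cdot r/\operatorname{rk}(C),
\]
so $\deg v=\operatorname{rk}C$.

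The cycle formula \eqref{eq:defect-cycle} needs the length of $\cO_G$ at the generic point of $G$, namely $\operatorname{rk}(C)$. Because $G\to C$ is faithfully flat with fibres isomorphic to $G_{\mathrm{red}}$, the map $G\to C$ is a $G_{\mathrm{red}}$-torsor over the one-point scheme $C$. Locally it is $G_{\mathrm{red}}\times C$ after fppf base change. One can also argue directly: the local ring of $G$ at its generic point is flat over $\cO_C$ with reduced (field) closed fibre, so its length equals $\operatorname{length}(\cO_C)=\operatorname{rk}(C)$.

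The main obstacle is the degree bookkeeping in the middle step, namely checking that $q|_{G_{\mathrm{red}}}$ is indeed finite surjective and that the kernel computation is exact as group schemes, not only on points. I would handle this by working with ranks of finite group schemes throughout and using the exact sequence $0\to G_{\mathrm{red}}\to G\to C\to0$ restricted to $F_{\bar s}$.
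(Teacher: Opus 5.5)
Your proposal is correct, but it reaches $e(N)=\operatorname{rk}(C)$ by a different piece of bookkeeping than the paper.

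The paper base-changes the bicartesian square of \cref{thm:pushout} to $\bar s$, using exactness of base change on fppf abelian sheaves to keep it bicartesian. It then applies the snake lemma to obtain
\[
  0\to\Ker(v)\to K_{\bar s}\xrightarrow{h_{\bar s}}F_{\bar s}\to C\to0,
\]
and compares ranks using $\operatorname{rk}K=\operatorname{rk}F$.

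You instead work with the restriction of $q_{\cL,\bar s}$ to $G_{\mathrm{red}}$. You identify the kernel of this isogeny as $F_{\bar s}\cap G_{\mathrm{red}}$, and use $G=G_{\mathrm{red}}+F_{\bar s}$ to get the exact sequence $0\to F_{\bar s}\cap G_{\mathrm{red}}\to F_{\bar s}\to C\to0$. You then conclude by multiplicativity of degrees in $\phi_{\cM_{\bar s}}=(q_{\cL,\bar s}|_{G_{\mathrm{red}}})\circ v$.

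Your route is slightly more economical. It needs only three inputs: $q_\cL u=\phi_\cM$, finite flatness of $q_\cL$ with kernel $F$, and the generic equality of ranks. It does not use the cocartesian half of \cref{thm:pushout} or its compatibility with base change.

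The paper's route yields more: the identifications $\Ker(h_{\bar s})\simeq\Ker(v)$ and $\Coker(h_{\bar s})\simeq C$. These show exactly how the defect appears in the pushout datum on the special fibre; for instance, $h_{\bar s}$ is an isomorphism if and only if $e(N)=1$.

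Your length computation at the generic point of $G$ is just the explicit form of the paper's flat pullback of cycles along the torsor $G\to C$.
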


\begin{proof}
The map $A_{\bar s}\to G$ is finite and surjective.  Thus
$G$ is connected and irreducible.  Since $A_{\bar s}$ is
reduced, its map to $G$ factors through $G_{\mathrm{red}}$.  Over the perfect
field $\bar k$, the reduction of a group scheme is a closed smooth subgroup by
Cartier's theorem \cite[Expos\'{e}~VI$_A$, \S0]{SGA3I}.  It is proper,
connected, and topologically the image of $A_{\bar s}$, so it is an abelian
variety.  The induced map $v:A_{\bar s}\to G_{\mathrm{red}}$ is the base
change of the finite map $A_{\bar s}\to G$ along
$G_{\mathrm{red}}\to G$; it is finite and surjective, hence an isogeny.

The quotient by the closed smooth subgroup $G_{\mathrm{red}}$ is
representable, and $G\to C$ is a $G_{\mathrm{red}}$-torsor
\cite[Expos\'{e}~V, Corollaire~10.1.3]{SGA3I}.  It is proper and
zero-dimensional, hence finite.  Its underlying topological space is one
point, so it is connected.

Base change on abelian $\fppf$ sheaves is exact, see \cite[03I4, 04JC]{Stacks}.  By the characterization of
cartesian and cocartesian squares in an abelian category
\cite[Tag~08N2]{Stacks}, the left-hand square in
\eqref{eq:pushout-diagram} remains bicartesian after base change to $\bar s$.
Thus we obtain a diagram with exact rows
\[
\begin{array}{ccccccccc}
0&\longrightarrow&K_{\bar s}&\longrightarrow&A_{\bar s}
  &\xrightarrow{\phi_{\cM_{\bar s}}}&A^\vee_{\bar s}
  &\longrightarrow&0\\
&&\downarrow h_{\bar s}&&\downarrow u_{\bar s}&&\Vert\\
0&\longrightarrow&F_{\bar s}&\longrightarrow&G
  &\xrightarrow{q_{\cL,\bar s}}&A^\vee_{\bar s}
  &\longrightarrow&0.
\end{array}
\]
Apply the snake lemma to this diagram.  Since the right-hand vertical map is
the identity, it gives canonical isomorphisms
\[
  \Ker(h_{\bar s})\simeq\Ker(u_{\bar s}),\qquad
  \Coker(h_{\bar s})\simeq\Coker(u_{\bar s}).
\]
The map $u_{\bar s}$ factors as
\[
  A_{\bar s}\xrightarrow{v}G_{\mathrm{red}}
  \lhook\joinrel\longrightarrow G.
\]
Since the second map is a monomorphism, $\Ker(u_{\bar s})=\Ker(v)$.
Notice also that $\Ker(v)\subseteq K_{\bar s}$: if $v(a)=0$, then
$\phi_{\cM_{\bar s}}(a)=q_{\cL,\bar s}(u_{\bar s}(a))=0$.
Moreover, $v$ is an isogeny and hence an $\fppf$ epimorphism.  Therefore the
image of $u_{\bar s}$ as an $\fppf$ sheaf is $G_{\mathrm{red}}$, and
\[
  \Coker(u_{\bar s})=G/G_{\mathrm{red}}=C.
\]
Substituting these identifications into the canonical kernel--cokernel
sequence for $h_{\bar s}$ gives an exact sequence of finite group schemes
\begin{equation}\label{eq:defect-exact}
  0\longrightarrow\Ker(v)\longrightarrow K_{\bar s}
  \xrightarrow{h_{\bar s}}F_{\bar s}\longrightarrow C
  \longrightarrow0.
\end{equation}
Since $K$ and $F$ have equal rank, \eqref{eq:defect-exact} gives
\[
  \operatorname{rk}(C)=\operatorname{rk}\Ker(v)=\deg(v).
\]
A finite connected group scheme over a perfect field of characteristic $p$
has $p$-power rank
\cite[\S14.4, first corollary, p.~112]{Waterhouse}.
Finally, flat pullback of cycles along the $G_{\mathrm{red}}$-torsor
$G\to C$ gives \eqref{eq:defect-cycle}.
\end{proof}

Thus the degeneration has no additional components, toric part, or
positive-dimensional affine part: it is a purely nilpotent, $p$-primary
thickening of an abelian variety.  In residue characteristic zero, Cartier's
theorem makes the finite connected group $C$ \'{e}tale, hence trivial; the
same argument shows directly that every quasiabelian model is an abelian
scheme.

For a proper flat finitely presented morphism $f:X\to S$, recall that $f$ is
\emph{cohomologically flat in degree zero} if formation of
$f_*\cO_X$ commutes with arbitrary base change: for every morphism
$g:S'\to S$, writing $f':X\times_S S'\to S'$ for the projection, the
canonical map
\[
  g^*f_*\cO_X\longrightarrow
  f'_*\cO_{X\times_S S'}
\]
is an isomorphism \cite[\S~1.4]{RaynaudSpecialisation}.  Thus, for $N/R$, this
says equivalently that for every $R$-algebra $R'$ the canonical map
\[
  \Gamma(N,\cO_N)\otimes_R R'
  \longrightarrow \Gamma(N_{R'},\cO_{N_{R'}})
\]
is an isomorphism.

\begin{corollary}\label{cor:defect-cohomology}
Let $g=\dim G_{\mathrm{red}}$.  For every $i\geq0$,
\begin{equation}\label{eq:defect-cohomology}
  \dim_{\bar k}H^i(G,\cO_G)
  =e(N)\binom gi.
\end{equation}
Moreover,
\[
  C\simeq\Spec\Gamma(G,\cO_G),\qquad
  \Gamma(N,\cO_N)=R.
\]
Consequently, $N/R$ is cohomologically flat in degree zero if and only if
$e(N)=1$.
\end{corollary}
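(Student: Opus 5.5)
The plan is to use the torsor structure $\pi\colon G\to C$ from \cref{thm:special-defect}: it is a $G_{\mathrm{red}}$-torsor over a finite connected scheme $C=\Spec\Lambda$ with $\dim_{\bar k}\Lambda=e(N)$.

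\textbf{Step 1: the torsor is trivial.} Since $\bar k$ is algebraically closed and $C$ is local Artinian with residue field $\bar k$, the $\bar k$-point of $C$ lifts to $G$. I would justify this using the fact that $G_{\mathrm{red}}$ is smooth, so torsors under it over a strictly henselian Artinian base are trivial; alternatively, the identity section of $G$ is such a point. I would then try to show that $G\simeq G_{\mathrm{red}}\times_{\bar k}C$ as schemes over $C$. The identity gives only a $\bar k$-point, not a section over $C$. Because $G_{\mathrm{red}}\to\Spec\bar k$ is smooth, however, a $G_{\mathrm{red}}$-torsor over a henselian local base with a point on the closed fibre admits a section (Hensel/EGA IV 18.5.17). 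Hence $G\simeq G_{\mathrm{red}}\times C$ over $C$.

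\textbf{Step 2: cohomology of $G$.} By flat base change along the field extension, or simply Künneth over $\bar k$ with $C$ affine,
\[
  H^i(G,\cO_G)\simeq H^i(G_{\mathrm{red}},\cO)\otimes_{\bar k}\Lambda .
\]
Since $G_{\mathrm{red}}$ is an abelian variety of dimension $g$, the first factor has dimension $\binom gi$, and the formula follows. In degree zero, $\Gamma(G,\cO_G)=\Lambda$; more canonically, $\pi_*\cO_G=\cO_C$ because $G_{\mathrm{red}}$ is proper and geometrically integral with $\Gamma=\bar k$, so $\Spec\Gamma(G,\cO_G)=C$.

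\textbf{Step 3: global sections over $R$.} $\Gamma(N,\cO_N)$ is a finite flat $R$-algebra: it is finite by properness and torsion-free since $N$ is flat. It lies in $\Gamma(N_\eta,\cO)=\Frac(R)$, because $N_\eta$ is an abelian variety, and it is finite over $R$, which is normal. Hence $\Gamma(N,\cO_N)=R$; alternatively, $N$ is integral and $\Gamma$ is contained in $\Gamma(A,\cO_A)=R$.

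\textbf{Step 4: cohomological flatness.} Suppose $N/R$ is cohomologically flat in degree zero. Base change to the closed point and then to $\bar k$ (flat) gives $\dim\Gamma(G,\cO_G)=1$, so $e(N)=1$. Conversely, if $e(N)=1$ then $h^0$ of both fibres equals $1$. Since $R$ is a DVR and $N$ is flat and proper, constancy of $h^0$ on fibres implies cohomological flatness in degree zero, by Grauert or the standard criterion \cite[\S1.4]{RaynaudSpecialisation}. Indeed, $R\otimes k\to\Gamma(N_k,\cO)$ is then an injection between one-dimensional spaces, and surjectivity for $H^0$ over a DVR propagates to all base changes.

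\textbf{Main obstacle.} The hardest point is Step 1, the triviality of the torsor over the Artinian scheme $C$; the smoothness of $G_{\mathrm{red}}$ is essential there. If that argument fails, I would instead use the spectral sequence of $\pi$ together with $R^i\pi_*\cO_G$ being locally free, by base change for the torsor, to get the dimension count.
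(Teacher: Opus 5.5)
Your argument is correct. Steps~3 and~4 coincide with the paper's: embed $\Gamma(N,\cO_N)$ in $\Frac(R)$ and use that $R$ is integrally closed, then apply cohomology and base change, noting that the degree-zero base-change map on the special fibre is an isomorphism exactly when $h^0(G,\cO_G)=e(N)$ equals $1$.

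Where you differ is in computing $H^i(G,\cO_G)$. You show that the torsor $\pi\colon G\to C$ is globally trivial, so $G\simeq G_{\mathrm{red}}\times_{\bar k}C$ as $C$-schemes (not as groups). This holds because $\pi$ is smooth, smoothness descending from $G_{\mathrm{red}}$ through the fppf-local trivializations. The identity point of the closed fibre then lifts to a section over the Artinian local base $C$, either by Hensel or simply by infinitesimal lifting along the nilpotent thickening $\Spec\bar k\hookrightarrow C$. After that, the dimension count is flat base change along $C\to\Spec\bar k$.

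The paper never trivializes $\pi$. It uses $\pi$ itself as an fppf cover trivializing the torsor. By flat base change and descent it gets $R^i\pi_*\cO_G$ locally free of rank $\binom gi$, and then takes global sections over the affine scheme $C$ of length $e(N)$. That is exactly your stated fallback.

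Your route gives the sharper structural statement that $G$ is, as a scheme, a product of an abelian variety with a finite local scheme, and it makes $\Gamma(G,\cO_G)=\Lambda$ immediate. The paper's route needs neither smoothness of $\pi$ nor a rational point, only local triviality. The obstacle you flagged in Step~1 is therefore not a real one.
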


\begin{proof}
Let $\pi:G\to C$ be the $G_{\mathrm{red}}$-torsor.  There is an $\fppf$
cover $U\to C$ such that $G\times_CU\to U$ identifies with the projection
$G_{\mathrm{red}}\times_{\bar k}U\to U$.  By
\cite[Lemma~80.11.7]{Stacks}, the map $\pi$ is itself an $\fppf$ cover, so
one may take $U=G$.  Flat base change and descent show that
$R^i\pi_*\cO_G$ is locally free of rank
$\binom gi$; here we use the standard description of the coherent cohomology
algebra of an abelian variety \cite[\S13]{Mumford}.  Since $C$ is finite local
of length $e(N)$, taking global sections gives
\eqref{eq:defect-cohomology}.  The case $i=0$ also gives
$\pi_*\cO_G=\cO_C$ and the displayed description of $C$.

Properness makes $\Gamma(N,\cO_N)$ finite over $R$.  Integrality of
$N$ and density of the generic fibre embed this ring into
$\Gamma(N_\generic,\cO_{N_\generic})=\Frac(R)$.  It therefore lies in the
integral closure of $R$ in $\Frac(R)$, which is $R$, and equality follows.
The degree-zero base-change map on the special fibre is consequently an
isomorphism exactly when
$h^0(G,\cO_G)=e(N)$ is one.  The cohomology and base change criterion
gives the last assertion, \cite[Tags~0E62 and~02KH]{Stacks}.
\end{proof}

\begin{corollary}\label{cor:defect-rigidity}
The following conditions are equivalent:
\begin{enumerate}
  \item $N$ is an abelian scheme;
  \item $N$ is smooth;
  \item $N$ is normal;
  \item $u:A\to N$ is an isomorphism;
  \item $G$ is reduced;
  \item $N$ is cohomologically flat in degree zero;
  \item $e(N)=1$;
  \item $C$ is the trivial group scheme;
  \item $h:K\to F$ is an isomorphism for one, equivalently every, ample
  $\cL$;
  \item $N^{\mathrm D}$ is trivial;
  \item $\Dual N$ is an algebraic space.
\end{enumerate}
\end{corollary}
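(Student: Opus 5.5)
The eleven conditions split naturally into a core block governed by the defect, and I would route everything through (vii) $e(N)=1$. The core block is (iv), (vii), (viii), (ix), (v), (vi). By \cref{thm:special-defect}, $e(N)=\operatorname{rk}(C)$, so (vii)$\Leftrightarrow$(viii) is immediate. \cref{cor:defect-cohomology} already gives (vi)$\Leftrightarrow$(vii). For (v): if $G$ is reduced then $G=G_{\mathrm{red}}$ and $C=0$. Conversely, if $C=0$, the exact sequence of \cref{thm:special-defect} gives $G=G_{\mathrm{red}}$.

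For (ix), fix an ample $\cL$ and use the exact sequence \eqref{eq:defect-exact}:
\[
0\to\Ker(v)\to K_{\bar s}\to F_{\bar s}\to C\to0.
\]
If $e(N)=1$, then $v$ is an isomorphism and $C=0$, so $h_{\bar s}$ is an isomorphism. Since $K$ and $F$ are finite flat over $R$, a homomorphism that is an isomorphism on both fibres is an isomorphism; this follows from the fibrewise criterion for finite flat modules, via Nakayama and equal ranks. Conversely, if $h$ is an isomorphism, then $\Ker(v)=0$ and $C=0$.

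(ix)$\Rightarrow$(iv): if $h$ is an isomorphism, the bicartesian square of \cref{thm:pushout} makes $u$ an isomorphism, since the pushout along an isomorphism is an isomorphism. Then (iv)$\Rightarrow$(i)$\Rightarrow$(ii)$\Rightarrow$(iii) is trivial. (iii)$\Rightarrow$(iv) holds because $u$ is the normalization by \cref{prop:normalization}. (iv)$\Rightarrow$(v) holds because $G\simeq A_{\bar s}$.

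For (x): by the statement in the introduction, $G^{\mathrm D}\simeq C^{\mathrm D}$. I would prove this directly here. Apply $\HomStack(-,\Gm)$ to $0\to G_{\mathrm{red}}\to G\to C\to0$ and use $G_{\mathrm{red}}^{\mathrm D}=0$ from \cref{prop:ext-vanishing}(ii). This gives $C^{\mathrm D}\simeq G^{\mathrm D}$, which has rank $e(N)$. Now $N^{\mathrm D}$ is finite over $R$ by \cref{thm:brochard-known}(ii), applied with $\HHm=0$, and its formation commutes with base change since Cartier duals of group schemes do. So $N^{\mathrm D}$ trivial forces $G^{\mathrm D}=0$, hence $C=0$. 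Conversely, if $N$ is an abelian scheme, then $N^{\mathrm D}=0$ by \cref{prop:ext-vanishing}.

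For (xi): by \eqref{eq:hminus-dual}, $\HHm(\Dual N)=N^{\mathrm D}$, and $\Dual N$ is an algebraic space exactly when this inertia group vanishes. This gives (x)$\Leftrightarrow$(xi).

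The main obstacle is the base-change bookkeeping in (x). Triviality of the finite, possibly non-flat, group scheme $N^{\mathrm D}$ over $R$ must be detected on the geometric special fibre. I would handle this by noting that $(N^{\mathrm D})_{\bar s}=G^{\mathrm D}$, since homomorphisms to $\Gm$ commute with base change. In the direction (x)$\Rightarrow$(viii), I would use only that triviality of $N^{\mathrm D}$ implies $G^{\mathrm D}=0$, which needs no flatness.
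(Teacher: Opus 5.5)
Your proposal is correct and follows essentially the paper's argument: normalization for (i)--(iv), \cref{thm:special-defect} and \cref{cor:defect-cohomology} for (v)--(viii), the pushout of \cref{thm:pushout} for (ix), the identification $G^{\mathrm D}\simeq C^{\mathrm D}$ together with base change for (x), and \eqref{eq:hminus-dual} for (xi). The only deviations are small. You get (i)$\Rightarrow$(x) directly from $A^{\mathrm D}=0$ for abelian schemes, whereas the paper applies Nakayama to the augmentation ideal of the finite scheme $N^{\mathrm D}$. You also reach (ix) through the special-fibre sequence \eqref{eq:defect-exact} and equal ranks, whereas the paper reads the equivalence of (iv) and (ix) directly off the bicartesian square; both routes are valid.
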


\begin{proof}
The first four conditions are equivalent because $u$ is the normalization,
an abelian scheme is smooth and normal, and a smooth model is normal.  The
next four equivalences follow from
\cref{thm:special-defect,cor:defect-cohomology}; the trivial group scheme has
rank one.  The pushout of \cref{thm:pushout} is $A$ precisely when $h$ is an
isomorphism.  Every character of $G$ kills the abelian variety
$G_{\mathrm{red}}$ and hence factors uniquely through $C$, so
\[
  G^{\mathrm D}\simeq C^{\mathrm D}.
\]
Cartier duality is faithful on finite group schemes and commutes with base
change.  Moreover, $N^{\mathrm D}$ is finite by
\cref{thm:brochard-known}; its generic fibre is trivial, and when its special
fibre is trivial its augmentation ideal vanishes by Nakayama's lemma.  This
proves the tenth equivalence.  Finally, the inertia of $\Dual N$ is
$N^{\mathrm D}$ by \eqref{eq:hminus-dual}; it is trivial exactly when the dual
is an algebraic space.
\end{proof}

\subsection{Polarizations and \texorpdfstring{$\ell$}{ell}-adic cohomology}

Let $\cL_{\mathrm{red}}$ be the restriction of $\cL_{\bar s}$
to $G_{\mathrm{red}}$.

\begin{proposition}\label{prop:polarization-defect}
With $\cM=u^*\cL$, one has
\[
  \cM_{\bar s}=v^*\cL_{\mathrm{red}},
  \qquad
  \phi_{\cM_{\bar s}}=v^\vee\circ\phi_{\cL_{\mathrm{red}}}\circ v,
\]
and therefore
\begin{equation}\label{eq:polarization-defect}
  \deg(\phi_{\cM})=e(N)^2\deg(\phi_{\cL_{\mathrm{red}}}),
  \qquad
  \chi(\cM)=e(N)\chi(\cL_{\mathrm{red}}).
\end{equation}
In particular, $e(N)^2$ divides the degree of every polarization represented
by an ample line bundle extending to $N$.  Writing $(d_1,\ldots,d_g)$ for a
polarization type, with $\chi(\cM)=d_1\cdots d_g$, one has
\[
  e(N)\mid d_1\cdots d_g,
  \qquad
  v_p(e(N))\leq v_p(d_1\cdots d_g).
\]
An extending polarization of degree prime to $p$, and in particular an
extending principal polarization, forces $N=A$.
\end{proposition}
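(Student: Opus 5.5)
The plan is to work on the geometric special fibre and use the factorization $u_{\bar s}=\iota\circ v$ from \cref{thm:special-defect}, where $\iota\colon G_{\mathrm{red}}\hookrightarrow G$.

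First, since $\cL_{\mathrm{red}}=\iota^*\cL_{\bar s}$, we get
\[
  \cM_{\bar s}=u_{\bar s}^*\cL_{\bar s}=v^*\iota^*\cL_{\bar s}=v^*\cL_{\mathrm{red}}.
\]
The line bundle $\cL_{\mathrm{red}}$ is ample on the abelian variety $G_{\mathrm{red}}$, being the restriction of an ample sheaf to a closed subscheme. Next we apply the standard functoriality of the Mumford homomorphism under a homomorphism of abelian varieties, $\phi_{v^*\cL'}=v^\vee\circ\phi_{\cL'}\circ v$. This can be checked directly: $t_a^*v^*\cL'\otimes v^*\cL'^{-1}=v^*(t_{v(a)}^*\cL'\otimes\cL'^{-1})$, and $v^*$ on $\Pic^0$ is $v^\vee$. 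It gives the stated formula for $\phi_{\cM_{\bar s}}$.

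Degrees of isogenies are multiplicative, and $\deg v^\vee=\deg v$ because the kernel of $v^\vee$ is the Cartier dual of $\Ker v$, which has the same rank. Hence
\[
  \deg\phi_{\cM_{\bar s}}=e(N)^2\deg\phi_{\cL_{\mathrm{red}}}.
\]
Since $K=\Ker\phi_\cM$ is finite flat over $R$, $\deg\phi_\cM$ equals the rank of $K$, which equals $\deg\phi_{\cM_{\bar s}}$. For $\chi$, pulling back along an isogeny multiplies the Euler characteristic by the degree, via $v_*\cO$ being locally free of rank $\deg v$ and Riemann--Roch, $\chi(\cL)^2=\deg\phi_\cL$ on abelian varieties. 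So $\chi(\cM_{\bar s})=e(N)\chi(\cL_{\mathrm{red}})$. Flatness of $A$ and local constancy of $\chi$ in proper flat families identify $\chi(\cM)$ with $\chi(\cM_{\bar s})$. Both Euler characteristics are positive integers, since the bundles are ample.

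For the consequences: $\chi(\cL_{\mathrm{red}})$ is an integer, so $e(N)\mid\chi(\cM)=d_1\cdots d_g$, which gives the valuation inequality. Also $e(N)^2\mid\deg\phi_\cM$ for every ample $\cL$ on $N$. If $\deg\phi_\cM$ is prime to $p$, then the $p$-power $e(N)$ from \cref{thm:special-defect} must be $1$, and \cref{cor:defect-rigidity} gives $u$ an isomorphism, i.e.\ $N=A$. A principal polarization has degree $1$. The phrase ``extending'' I will interpret as $\cM=u^*\cL$ with $\cL$ on $N$, which is exactly the setting above.

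The main obstacle is bookkeeping rather than depth. One must check that $\chi$ and $\deg\phi$ are constant across the fibres of $A$ and computed correctly after base change to $\bar k$. One must also check that $\deg v^\vee=\deg v$ holds for inseparable isogenies, where $e(N)$ is a $p$-power and $\Ker v$ is infinitesimal; I expect Cartier duality of $\Ker v$ with $\Ker v^\vee$ to handle this uniformly.
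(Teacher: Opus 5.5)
Your proposal is correct and follows essentially the same route as the paper. You restrict along the factorization $u_{\bar s}=\iota\circ v$, use functoriality $\phi_{v^*\cL'}=v^\vee\circ\phi_{\cL'}\circ v$ together with the standard degree and Euler-characteristic formulas for the isogeny $v$, transfer these to $\cM$ by constancy in the abelian scheme $A/R$, and finish with the $p$-power property of $e(N)$ and \cref{cor:defect-rigidity}; you simply spell out details the paper leaves as ``standard'', namely $\deg v^\vee=\deg v$ via Cartier duality and $\chi(v^*\cL_{\mathrm{red}})=e(N)\chi(\cL_{\mathrm{red}})$ via Riemann--Roch and positivity of $\chi$ for ample bundles.
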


\begin{proof}
The first identities follow by restricting $u$ and the translation/difference
construction to the reduced special fibre.  Degrees of polarizations and
Euler characteristics are constant in the abelian scheme $A/R$.  For the
isogeny $v$, the standard degree and Euler characteristic formulas give
\eqref{eq:polarization-defect}.  Since
$\chi(\cM)=d_1\cdots d_g$ and
$\deg(\phi_{\cM})=(d_1\cdots d_g)^2$, the divisibility statements follow.
If the degree is prime to $p$, then the $p$-power $e(N)$ is one, and
\cref{cor:defect-rigidity} applies.
\end{proof}

We have the following analogue of the main result of \cite{FakhruddinSrinivas}. In that 
paper the focus was on quasi-reductive group schemes.

\begin{proposition}\label{prop:prime-to-p-cohomology}
For every prime $\ell\neq p$, there are canonical isomorphisms
\[
  H^i_{\etale}(G,\mathbf Z_\ell)
  \simeq H^i_{\etale}(G_{\mathrm{red}},\mathbf Z_\ell)
  \xrightarrow[\sim]{v^*}H^i_{\etale}(A_{\bar s},\mathbf Z_\ell).
\]
Together with smooth proper base change for $A$, these form a canonical
zigzag to the generic-fibre cohomology.  If the original residue field $k$ is
finite, the $k$-schemes $N_s$, $(N_s)_{\mathrm{red}}$, and $A_s$ have the same
zeta function.
\end{proposition}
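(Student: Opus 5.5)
The first isomorphism $H^i_{\etale}(G,\mathbf Z_\ell)\simeq H^i_{\etale}(G_{\mathrm{red}},\mathbf Z_\ell)$ is topological invariance of the étale site. The closed immersion $G_{\mathrm{red}}\hookrightarrow G$ is a universal homeomorphism; indeed it is a nilpotent thickening. By \cite[Tag~04DY]{Stacks} it therefore induces an equivalence of étale sites. Hence it induces isomorphisms on cohomology with torsion coefficients $\mathbf Z/\ell^n$, and passing to the limit gives the statement with $\mathbf Z_\ell$ coefficients. Canonicity is automatic, since the map is restriction along the canonical closed immersion.

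For the second isomorphism, I would use that $v\colon A_{\bar s}\to G_{\mathrm{red}}$ is an isogeny of abelian varieties over $\bar k$, by \cref{thm:special-defect}, of $p$-power degree $e(N)$. On $\ell$-adic cohomology, $H^*_{\etale}(B,\mathbf Z_\ell)\simeq\bigwedge^*\operatorname{Hom}(T_\ell B,\mathbf Z_\ell)$ functorially for abelian varieties $B$, so it suffices that $T_\ell(v)$ is an isomorphism. I see two routes. The first: $\Ker(v)$ has $p$-power rank, so $v[\ell^n]$ is injective on $\ell^n$-torsion and hence bijective by counting. The second, more in the spirit of the paper: by \cref{cor:prime-to-p-torsion}, $A[\ell^n]\to N[\ell^n]$ is an isomorphism, and on the special fibre $N[\ell^n]_{\bar s}=G[\ell^n]$ is étale, hence reduced, so it equals $G_{\mathrm{red}}[\ell^n]$. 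Either way, $v^*$ is an isomorphism on $H^1$, and hence in all degrees via the exterior algebra description. Alternatively, one can avoid the exterior algebra: there is an isogeny $w$ with $v\circ w=[e(N)]$, and $[e(N)]$ acts on $H^i$ by $e(N)^i$, which is a unit in $\mathbf Z_\ell$. So $v^*$ is injective with a one-sided inverse up to units, and symmetrically bijective. I would present this second argument because it is cleaner.

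The zigzag then comes from smooth proper base change for the abelian scheme $A$, applied over the strict henselization: it identifies $H^i(A_{\bar s})$ with $H^i(A_{\bar\eta})=H^i(N_{\bar\eta})$.

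For the zeta function, I would work over the finite field $k$. Topological invariance holds already over $k$, so $N_s$ and $(N_s)_{\mathrm{red}}$ have the same closed points with the same residue fields, and hence the same zeta function directly. Note that $(N_s)_{\mathrm{red}}$ need not be a group if $k$ is imperfect, but $k$ is finite and hence perfect, so here it is one. Then $v$ descends to a $k$-isogeny $A_s\to(N_s)_{\mathrm{red}}$, by the same construction before base change to $\bar k$. By the Grothendieck--Lefschetz trace formula, the zeta functions are determined by the Frobenius action on $H^i_c(-,\mathbf Q_\ell)$. The isomorphisms $v^*$ above are Frobenius-equivariant because $v$ is defined over $k$. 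The main obstacle I expect is this descent to $k$, together with making the Galois equivariance explicit; I would handle it by noting that $u_s$ and its factorization through the reduction are defined over $k$.
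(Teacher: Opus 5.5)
Your proposal is correct and follows the same route as the paper: topological invariance of the \'{e}tale site for the nilpotent immersion $G_{\mathrm{red}}\hookrightarrow G$, then the fact that the $p$-power-degree isogeny $v$ induces an isomorphism on integral $\ell$-adic cohomology, and finally, over a finite field, equality of point counts for nilpotent thickenings together with isogeny invariance of zeta functions. You supply details that the paper leaves as standard: the Tate-module (or multiplication-by-$e(N)$) argument for $v^*$, and the descent of $v$ to a $k$-isogeny $A_s\to(N_s)_{\mathrm{red}}$, which lets the Lefschetz trace formula give the zeta-function equality.
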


\begin{proof}
The nilpotent immersion $G_{\mathrm{red}}\hookrightarrow G$ induces an
equivalence of \'{e}tale topoi.  The isogeny $v$ has $p$-power degree by
\cref{thm:special-defect}, so it induces an isomorphism on integral
$\ell$-adic cohomology for $\ell\neq p$.  Over a finite field, nilpotent
thickenings have the same points over every finite extension, and isogenous
abelian varieties have the same zeta function.
\end{proof}

\begin{remark}\label{rem:conductor}
The coherent sheaf
\[
  \cQ_N:=u_*\cO_A/\cO_N
\]
is supported on the special fibre and vanishes exactly when $N$ is abelian.
Its annihilator
\[
  \mathfrak c_N:=\operatorname{Ann}_{\cO_N}(\cQ_N)
  =\{f\in\cO_N\mid
      f\,u_*\cO_A\subseteq\cO_N\}
\]
is the conductor ideal; equivalently, it is the largest ideal of
$\cO_N$ that is also an ideal of $u_*\cO_A$.

The group scheme $A$ acts on $N$ by $(a,n)\mapsto u(a)+n$ and on itself by
translation.  The morphism $u$ is equivariant for these actions: for every
$R$-scheme $T$ and every $a\in A(T)$,
\[
  u_T\circ t_a=t_{u_T(a)}\circ u_T.
\]
Consequently, the inclusion $\cO_N\hookrightarrow u_*\cO_A$ is
$A$-equivariant, $\cQ_N$ inherits an $A$-linearization, and both
$\mathfrak c_N$ and $\operatorname{Supp}(\cQ_N)$ are invariant under
translation by $A$.  After base change to $\bar s$, the isogeny
$v:A_{\bar s}\to G_{\mathrm{red}}$ is surjective on $\bar k$-points, while
$|G_{\mathrm{red}}|=|G|$.  Hence $A_{\bar s}(\bar k)$ acts transitively on the
closed points of $G$.  It follows that the support of
$\cQ_N|_G$, when nonempty, contains every closed point and is therefore
all of $|G|$.

\end{remark}

\begin{proof}[Proof of \cref{thm:intro-structure}]
Automatic commutativity is \cref{lem:auto-commutative}.  Combine
\cref{prop:normalization,lem:q-properties,thm:polarization-quotient,thm:pushout}.
\end{proof}

\section{The dual of a quasiabelian model}\label{sec:local-duality}

Throughout this section, $R$ is an arbitrary discrete valuation ring and $N$
is quasiabelian.  Choose an ample invertible sheaf $\cL$ on $N$ and
retain the notation $\cM$, $K$, $F$, and $h$ of \cref{thm:pushout}.

\begin{theorem}\label{thm:dual-quasiabelian}
For the chosen polarization quotient there is a natural presentation
\begin{equation}\label{eq:dual-quasiabelian-quotient}
  \Dual N\simeq[A/F^{\mathrm D}].
\end{equation}
The action is by translation through
\begin{equation}\label{eq:dual-action}
  \delta:F^{\mathrm D}\xrightarrow{h^{\mathrm D}}K^{\mathrm D}
  \xrightarrow{\iota_{\cM}}A,
\end{equation}
where $\iota_{\cM}$ is the connecting map obtained by dualizing the polarization
sequence.  In particular,
\[
  \HHm(\Dual N)=\Ker(h^{\mathrm D}),\qquad
  \HHz(\Dual N)=\Coker(\delta),
\]
and there is an exact sequence
\begin{equation}\label{eq:ext1-quasiabelian}
  0\longrightarrow\Coker(h^{\mathrm D})
  \longrightarrow\ExtSheaf{1}{N}
  \longrightarrow A^\vee\longrightarrow0.
\end{equation}
The stack $\Dual N$ is algebraic, proper, flat, and finitely presented over
$R$, with finite diagonal.  These assertions require no assumption on $2$.
\end{theorem}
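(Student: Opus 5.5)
The plan is to apply \cref{lem:brochard-quotient} to the polarization sequence
\[
  0\to F\to N\xrightarrow{q_\cL}A^\vee\to 0
\]
of \cref{thm:polarization-quotient}. Here $F$ is finite flat and $B=A^\vee$ is an abelian scheme, with $B^\vee=A^{\vee\vee}\simeq A$ by biduality. The lemma then gives $\Dual N\simeq[A/F^{\mathrm D}]$, together with algebraicity, properness, flatness, finite presentation and finite diagonal. Its only vanishing input is $\ExtSheaf{1}{F}=0$ from \cref{prop:ext-vanishing}(i), which holds without any hypothesis on $2$.

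The main work is identifying the connecting map $F^{\mathrm D}\to A$ with $\delta=\iota_\cM\circ h^{\mathrm D}$. Apply $\RHom(-,\Gm)$ to both rows of \eqref{eq:pushout-diagram}. The right-hand vertical map is the identity, so naturality of the long exact Ext sequences gives a commutative square: the connecting map $F^{\mathrm D}\to\ExtSheaf{1}{A^\vee}\simeq A$ for the bottom row equals the composite of $h^{\mathrm D}$ with the connecting map $K^{\mathrm D}\to A$ for the top row. The latter is by definition $\iota_\cM$, the Cartier dual of the polarization sequence under Barsotti--Weil and biduality. The step I expect to be most delicate is checking that Brochard's $\delta$ in \cref{lem:brochard-quotient} is exactly this Ext connecting map, with compatible sign and biduality conventions. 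I would settle this by unwinding \cite[Lemma~3.9]{BrochardDuality}, which builds the quotient from the truncated $\RHom$ in \eqref{eq:dual-truncation}.

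The invariants are then read off as follows.
\begin{enumerate}
  \item $\HHm$ of a quotient stack $[A/H]$ is the stabilizer $\Ker(H\to A)$, so $\HHm(\Dual N)=\Ker(\delta)$.
  \item $\HHz(\Dual N)=\Coker(\delta)$ for the same reason.
  \item $\iota_\cM$ is injective: it is the Cartier dual of the finite flat kernel sequence $0\to K\to A\to A^\vee\to 0$, which dualizes to $0\to K^{\mathrm D}\to A\to A^\vee\to0$. Hence $\Ker(\delta)=\Ker(h^{\mathrm D})$.
\end{enumerate}
For \eqref{eq:ext1-quasiabelian}, take the long exact sequence of $\ExtSheaf{\bullet}{-}$ for the bottom row:
\[
  F^{\mathrm D}\xrightarrow{\delta} A\to\ExtSheaf{1}{N}\to\ExtSheaf{1}{F}=0.
\]
This gives $\ExtSheaf{1}{N}\simeq\Coker(\delta)$. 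Since $\iota_\cM$ embeds $K^{\mathrm D}$ in $A$ with $A/K^{\mathrm D}\simeq A^\vee$ via $\phi_\cM^\vee$, there is an exact sequence
\[
  0\to K^{\mathrm D}/\operatorname{Im}(h^{\mathrm D})\to A/\operatorname{Im}(\delta)\to A/K^{\mathrm D}\to 0,
\]
whose terms are $\Coker(h^{\mathrm D})$, $\Coker(\delta)=\ExtSheaf{1}{N}$ and $A^\vee$ respectively. This is \eqref{eq:ext1-quasiabelian}.
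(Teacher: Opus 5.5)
Your proposal is correct and takes the same route as the paper. Both apply \cref{lem:brochard-quotient} to $0\to F\to N\to A^\vee\to0$, identify the boundary map with $\iota_{\cM}\circ h^{\mathrm D}$ by naturality of the Ext sequence along the pushout diagram \eqref{eq:pushout-diagram}, and use injectivity of $\iota_{\cM}$ to read off $\HHm$, $\HHz$ and \eqref{eq:ext1-quasiabelian}. Your explicit derivation of \eqref{eq:ext1-quasiabelian}, from $\ExtSheaf{1}{N}\simeq\Coker(\delta)$ and the third isomorphism theorem, spells out a step the paper leaves implicit.
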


\begin{proof}
Apply \cref{lem:brochard-quotient} to
\[
  0\longrightarrow F\longrightarrow N\longrightarrow A^\vee
  \longrightarrow0.
\]
The boundary map in that lemma is $\delta$.  Indeed, the pushout diagram
identifies it with $h^{\mathrm D}$ followed by the boundary map for
$0\to K\to A\to A^\vee\to0$.  Dualizing the latter sequence gives
\[
  0\longrightarrow K^{\mathrm D}\xrightarrow{\iota_{\cM}}A
  \longrightarrow A^\vee\longrightarrow0.
\]
Thus $\iota_{\cM}$ is a monomorphism, which gives the formulas for $\HHm$ and
$\HHz$ and the exact sequence \eqref{eq:ext1-quasiabelian}.  The geometric
properties follow from \cref{lem:brochard-quotient}.
\end{proof}

The quotient is represented by the two-term complex
\begin{equation}\label{eq:dual-quasiabelian}
  [F^{\mathrm D}\xrightarrow{\delta}A].
\end{equation}
The degreewise short exact sequence of complexes gives a distinguished
triangle
\[
  A\longrightarrow\Dual N\longrightarrow\BG{F^{\mathrm D}}
  \longrightarrow A[1].
\]
It is not in general a short exact sequence of group stacks; its cohomology
sequence is the one displayed in \cref{thm:dual-quasiabelian}.
For genuinely stacky inputs, the remaining higher-Ext issue comes from the
finite flat part.

\begin{proposition}\label{prop:ext2-quasiabelian}
There is a natural monomorphism
\[
  \ExtSheaf{2}{N}\longrightarrow\ExtSheaf{2}{F}.
\]
In particular, if $2\in R^\times$, then
\begin{equation}\label{eq:ext2-quasiabelian}
  \ExtSheaf{2}{N}=0.
\end{equation}
\end{proposition}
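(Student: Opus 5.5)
We apply $\mathbf R\mathcal{H}om(-,\Gm)$ to the polarization-quotient sequence of \cref{thm:polarization-quotient},
\[
  0\longrightarrow F\longrightarrow N\xrightarrow{q_\cL} A^\vee\longrightarrow 0,
\]
and read off the resulting long exact sequence of $\fppf$ Ext sheaves. The relevant segment is
\[
  \ExtSheaf{2}{A^\vee}\longrightarrow\ExtSheaf{2}{N}\longrightarrow\ExtSheaf{2}{F}.
\]
This sequence is obtained by sheafifying the long exact sequence of global Ext groups over every $T$. Sheafification is exact, and the short exact sequence of $\fppf$ sheaves remains exact after restriction to the big site over $T$, so the sheafified sequence is exact.

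Since $A^\vee$ is an abelian scheme, \cref{prop:ext-vanishing}(ii) gives $\ExtSheaf{2}{A^\vee}=0$; this is the Ribeiro--Rosengarten vanishing. Exactness at $\ExtSheaf{2}{N}$ then shows that the restriction map $\ExtSheaf{2}{N}\to\ExtSheaf{2}{F}$, induced by $F\hookrightarrow N$, is a monomorphism. The map is natural because it is induced by the inclusion $F\hookrightarrow N$. It is independent of the chosen $\cL$ only in the sense that each choice gives such an embedding; we claim no more than that.

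For the second assertion, suppose $2\in R^\times$. Then Breen's vanishing, \cref{prop:ext-vanishing}(i), applied to the finite flat group scheme $F$ gives $\ExtSheaf{2}{F}=0$. Injectivity of $\ExtSheaf{2}{N}\to\ExtSheaf{2}{F}$ then forces $\ExtSheaf{2}{N}=0$, which is \eqref{eq:ext2-quasiabelian}.

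The main point requiring care is the exactness of the sheafified long exact sequence of local Ext sheaves. We need that the presheaves $T\mapsto\operatorname{Ext}^i_{T,\fppf}(-_T,\Gm)$ form a cohomological $\delta$-functor in the first variable for short exact sequences of sheaves on the big site. This holds because restriction to the localized site $(\mathrm{Sch}/T)_{\fppf}$ is exact and preserves injectives. The remaining steps are direct citations of results stated earlier in the paper.
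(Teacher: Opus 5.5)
Your proposal is correct and follows the paper's proof: you take the long exact Ext sequence of the polarization quotient $0\to F\to N\to A^\vee\to0$, kill $\ExtSheaf{2}{A^\vee}$ by the Ribeiro--Rosengarten vanishing, and, when $2\in R^\times$, kill $\ExtSheaf{2}{F}$ by Breen's vanishing. Your additional justification is a correct elaboration of a step the paper takes for granted: the sheafified sequence is exact because restriction to the localized site is exact and preserves injectives.
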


\begin{proof}
The long Ext sequence of $0\to F\to N\to A^\vee\to0$ contains
\[
  \ExtSheaf{2}{A^\vee}\longrightarrow
  \ExtSheaf{2}{N}\longrightarrow
  \ExtSheaf{2}{F}.
\]
The left-hand term vanishes by \cref{prop:ext-vanishing}, which gives the
monomorphism.  If $2\in R^\times$, the right-hand term vanishes by the same
proposition.
\end{proof}

\begin{corollary}\label{cor:thickness-stackiness}
On the geometric special fibre,
\[
  \HHm(\Dual G)=G^{\mathrm D}\simeq C^{\mathrm D}.
\]
Its rank is $e(N)$, whereas the generic stabilizer of $\Dual N$ is trivial.
If $N$ is singular, the inertia of $\Dual N$ is finite but not flat over
$R$.  If $2\in R^\times$, there is also an exact sequence
\[
  0\longrightarrow\BG{C^{\mathrm D}}\longrightarrow\Dual G
  \longrightarrow (G_{\mathrm{red}})^\vee\longrightarrow0.
\]
\end{corollary}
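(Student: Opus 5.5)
The plan is to work entirely on the geometric special fibre, using that formation of the dual commutes with arbitrary base change (\cref{thm:brochard-known}), so that $\Dual N\times_R\bar s\simeq\Dual G$. Then $\HHm(\Dual G)\simeq G^{\mathrm D}$ by \eqref{eq:hminus-dual}, since $\HHz(G)=G$. The identification $G^{\mathrm D}\simeq C^{\mathrm D}$ was already obtained in the proof of \cref{cor:defect-rigidity}: every character $G\to\Gm$ restricts to the constant map on the abelian variety $G_{\mathrm{red}}$, because $\CartierDual{(G_{\mathrm{red}})}=0$ by \cref{prop:ext-vanishing}(ii). It therefore factors uniquely through $C=G/G_{\mathrm{red}}$, using the exact sequence of \cref{thm:special-defect}. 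Cartier duality preserves rank, so \eqref{eq:defect-rank} gives rank $e(N)$.

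For the generic stabilizer, I will apply the same formula at $\generic$. Since $N_\generic$ is an abelian variety, $N_\generic^{\mathrm D}=0$, so $\Dual{N_\generic}\simeq\AbDual{N_\generic}$ is an algebraic space.

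For non-flatness, the inertia of $\Dual N$ is the pullback of $\HHm(\Dual N)=N^{\mathrm D}$, which is finite by \cref{thm:brochard-known}(ii) or by \cref{thm:dual-quasiabelian}. If $N$ is singular, then $e(N)>1$ by \cref{cor:defect-rigidity}. In that case $N^{\mathrm D}$ has generic rank $1$ but special-fibre rank $e(N)>1$. Finite Cartier duality commutes with base change, so $N^{\mathrm D}_{\bar s}=G^{\mathrm D}$. A finite flat scheme has locally constant rank, so $N^{\mathrm D}$ is not flat over $R$, and hence neither is the inertia.

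For the exact sequence, I will apply \cref{lem:dual-exactness}(i) to $0\to G_{\mathrm{red}}\to G\to C\to0$, regarded as an exact sequence of group stacks over $\bar k$. The hypotheses require $\ExtSheaf{2}{\HHz(C)}=\ExtSheaf{2}{C}=0$ and $\ExtSheaf{1}{\HHm(C)}=\ExtSheaf{1}{0}=0$. The first vanishing is \cref{prop:ext-vanishing}(i), since $2\in R^\times$ makes $2$ invertible in $\bar k$. This gives an exact sequence $0\to\Dual C\to\Dual G\to\Dual{G_{\mathrm{red}}}\to0$. It remains to identify the two outer terms. For $\Dual C$, the formulas \eqref{eq:hminus-dual} and \eqref{eq:hzero-dual} give $\HHm=C^{\mathrm D}$ and $\HHz=\ExtSheaf{1}{C}=0$, so $\Dual C\simeq\BG{C^{\mathrm D}}$. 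For $\Dual{G_{\mathrm{red}}}$, \cref{prop:ext-vanishing}(ii) gives $\HHm=0$ and $\HHz=(G_{\mathrm{red}})^\vee$, so it is the abelian variety $(G_{\mathrm{red}})^\vee$.

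I expect the main obstacle to be this last step: checking that the exactness criterion of \cref{lem:dual-exactness} applies verbatim over $\bar k$, and that the identification of $\Dual C$ as $\BG{C^{\mathrm D}}$ is compatible with the maps. The rank and flatness statements are essentially bookkeeping once base change of the dual is granted.
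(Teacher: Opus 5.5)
Your proposal is correct and follows essentially the paper's argument. You identify $G^{\mathrm D}\simeq C^{\mathrm D}$ by factoring characters through $C$, and you get the exact sequence by dualizing $0\to G_{\mathrm{red}}\to G\to C\to0$ via \cref{lem:dual-exactness}, using $\ExtSheaf{2}{C}=0$ when $2$ is invertible. You also spell out the rank, generic-stabilizer and non-flatness bookkeeping that the paper leaves implicit through \cref{cor:defect-rigidity}: over the connected base a finite flat scheme has constant rank, but here the generic rank is $1$ and the special rank is $e(N)>1$.
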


\begin{proof}
Every character of $G$ factors uniquely through $C$, as in the proof of
\cref{cor:defect-rigidity}.  This proves the first assertion in every
characteristic.  The last sequence follows by dualizing
$0\to G_{\mathrm{red}}\to G\to C\to0$ and using the finite-group higher-Ext
vanishing from \cref{prop:ext-vanishing}.
\end{proof}

Thus duality exchanges vertical nilpotent thickness with vertical stackiness.
In particular, it preserves properness and flatness but not flatness of
inertia.

The quotient presentation also gives the required separation statement.

\begin{corollary}\label{cor:separation}
The structural morphism $\Dual N\To\Spec R$ and the diagonal of $\Dual N$ are
proper.  In particular, if $V$ is a discrete valuation ring over $R$ and a
multiplicative invertible sheaf on $N_V$ is generically trivial, then its
generic multiplicative trivialization extends uniquely over $V$.
\end{corollary}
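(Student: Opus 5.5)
The plan is to work with the quotient presentation $\Dual N\simeq[A/F^{\mathrm D}]$ from \cref{thm:dual-quasiabelian} and to read off both assertions from \cref{lem:brochard-quotient}.

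First, properness of $\Dual N\to\Spec R$ is already part of \cref{thm:dual-quasiabelian}. For the diagonal, I would use the finite diagonal established in \cref{lem:brochard-quotient}: finite morphisms are proper, so $\Delta_{\Dual N}$ is proper. If one wants an argument that does not go back to the lemma, it is enough to check properness after the fppf cover $A\times_R A\to\Dual N\times_R\Dual N$. There the diagonal becomes the action map $j\colon F^{\mathrm D}\times_R A\to A\times_R A$, which is finite because its composite with the separated first projection is the finite projection $F^{\mathrm D}\times_R A\to A$. Properness then descends, since it is fppf local on the target.

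Second, I would deduce the extension statement. A multiplicative invertible sheaf on $N_V$ is a $V$-point $\xi$ of $\Dual N$; this uses that formation of the dual commutes with base change, by \cref{thm:brochard-known}. A generic multiplicative trivialization is an isomorphism $\xi_\eta\simeq 0_\eta$, that is, an $\eta$-point of
\[
  I:=\Isom(\xi,0)=\Spec V\times_{\xi,\,\Dual N\times\Dual N,\,\Delta}\Dual N .
\]
The scheme $I$ is proper over $V$, being a base change of the proper diagonal; it is even finite, and it is representable because the diagonal of an algebraic stack is representable. The valuative criterion of properness for the algebraic space $I\to\Spec V$ then gives a unique extension of the generic section to a section over $V$. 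Uniqueness uses separatedness of $I$, which follows from properness.

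The main point needing care is the identification of $V$-points and isomorphisms of $\Dual N$ with multiplicative line bundles and their multiplicative isomorphisms, compatibly with base change. This identification is exactly the definition of $\HomStack(N,\BG{\Gm})$ evaluated on $V$, so it should be routine.
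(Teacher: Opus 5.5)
Your proposal is correct and is essentially the paper's argument. Properness comes from \cref{thm:dual-quasiabelian}, and the diagonal is finite, hence proper, by the quotient presentation; the sheaf of multiplicative isomorphisms $\xi\to 0$ is the pullback of this finite diagonal along $(\xi,0)$, so the generic section extends uniquely. The paper phrases the last step through the finite (hence affine) $V$-scheme $I$, where the extension follows from integrality over the integrally closed ring $V$; this matches your remark that $I$ is finite and avoids any subtlety about valuative criteria for algebraic spaces.
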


\begin{proof}
Properness is part of \cref{thm:dual-quasiabelian}, and the diagonal is in fact
finite.  The sheaf of multiplicative isomorphisms from the given object of
$\Dual N(V)$ to the neutral object is the pullback of that finite diagonal.
It is finite over $V$.  A section over $\Frac(V)$ extends uniquely over the
integrally closed valuation ring $V$ by the valuative criterion for finite
morphisms.
\end{proof}

\section{Finite extensions of abelian schemes}\label{sec:devissage}

In this section we prove the main structure theorems for 
flat, proper, finitely presented group algebraic spaces.

\begin{proposition}\label{prop:generic-devissage}
Let $R$ be a discrete valuation ring and let $P$ be a proper flat finitely
presented commutative group algebraic space over $R$.  There is an exact
sequence
\begin{equation}\label{eq:generic-devissage}
  0\longrightarrow N\longrightarrow P\longrightarrow Q\longrightarrow0,
\end{equation}
where $N$ is quasiabelian and $Q$ is finite flat.
\end{proposition}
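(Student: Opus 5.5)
The plan is to take $N$ to be the schematic closure in $P$ of the neutral component of the generic fibre, and then to show that $Q=P/N$ is finite flat. By \cref{lem:projective-cm}, $P$ is a projective scheme, so I work with schemes throughout.

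First, consider the generic fibre $P_\generic$, a proper commutative group scheme over $L=\Frac(R)$. Its reduced neutral component $(P_\generic)^0_{\mathrm{red}}$ need not be a subgroup when $L$ is imperfect. To avoid this, I take $N_\generic$ to be the maximal abelian subvariety of $P_\generic$. I expect to characterize it as the image of the smooth locus of $(P_\generic)^0$, or as the maximal smooth connected subgroup, which is proper and hence an abelian variety. The quotient $P_\generic/N_\generic$ is then proper and zero-dimensional after passing to the reduced neutral component over $\bar L$, so it is finite. Concretely, $(P_{\bar L})^0_{\mathrm{red}}$ is an abelian variety; the difficulty is that it need not descend. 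I would instead choose a smooth connected subgroup: for instance, take the image of the Weil restriction, or use that for $n$ large the image $[p^n]\bigl((P_\generic)^0\bigr)$ is reduced and smooth, since the infinitesimal part is killed by a power of Frobenius/Verschiebung. This is the first delicate point.

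Next, define $N$ as the schematic closure of $N_\generic$ in $P$. It is flat over $R$ (being the closure of its generic fibre) and proper (being closed in $P$). It is a subgroup scheme because the group law and inverse preserve $N$: this can be checked on the schematically dense generic fibre of the flat scheme $N\times_R N$. Hence $N$ is quasiabelian, and it is finitely presented because $R$ is noetherian.

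Then set $Q=P/N$. Since $N$ is a flat closed subgroup, the fppf quotient exists as an algebraic space, and $P\to Q$ is an $N$-torsor, hence faithfully flat, proper, and of finite presentation. Flatness of $Q$ over $R$ descends along this cover, and universal closedness descends as well. Separatedness follows because $N$ is closed in $P$: the diagonal of $Q$ pulls back to $P\times_R N\to P\times_R P$, a closed immersion. So $Q$ is a proper flat group space with finite generic fibre $P_\generic/N_\generic$.

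The main obstacle is showing that $Q$ is quasi-finite over the special fibre, i.e.\ that $N_s$ has the same dimension as $P_s$. Fibre dimension gives $\dim N_s=\dim N_\generic=\dim P_\generic$, and flatness of $P$ gives $\dim P_s=\dim P_\generic$ (equidimensionality of group fibres). Consequently $Q_s=P_s/N_s$ is zero-dimensional. Therefore $Q$ is proper and quasi-finite, hence finite, and it is flat, so it is finite flat (finite locally free over the local ring $R$). Finally, \cref{lem:projective-cm} ensures that the separated group algebraic space $Q$ is a scheme if one wants that.
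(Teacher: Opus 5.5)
Your proposal is correct and follows the paper's proof essentially step for step: take the schematic closure $N$ of a generic abelian subvariety with finite quotient, form $Q=P/N$, get separatedness, flatness and universal closedness of $Q$ by descent along $P\to Q$, and use constancy of fibre dimensions to make $Q$ quasi-finite, hence finite flat. The differences are minor. The paper cites the structure theorem for proper group schemes over a field (SGA~3, VI$_B$ \S12), whereas you build $N_\generic$ as $[p^n]\bigl((P_\generic)^0\bigr)$; this works, because in positive characteristic $[p^n]=V^n\circ F^n$ factors through the smooth quotient by a large Frobenius kernel, and in characteristic zero $P_\generic$ is already smooth. The paper also obtains $Q$ directly as a scheme from SGA~3, V, Th\'eor\`eme~7.1 using projectivity of $P$, rather than as an algebraic space via Artin's quotient theorem.
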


\begin{proof}
By \cref{lem:projective-cm}, $P$ is a projective group scheme.  Over
$\generic$, the structure theorem for proper group schemes gives
\[
  0\longrightarrow B_\generic\longrightarrow P_\generic
  \longrightarrow Q_\generic\longrightarrow0,
\]
with $B_\generic$ an abelian variety and $Q_\generic$ finite
\cite[Expos\'{e}~VI$_B$, \S12]{SGA3I}.  Let $N$ be the schematic closure of
$B_\generic$ in $P$.  It is a flat closed subgroup, proper and finitely
presented over $R$, and its generic fibre is $B_\generic$.

The action groupoid $N\times_RP\rightrightarrows P$ satisfies the hypotheses
of \cite[Expos\'{e}~V, Th\'{e}or\`eme~7.1]{SGA3I}: its projections are proper
and flat, $P$ is projective over $R$, and its relation morphism
$N\times_RP\to P\times_RP$ is a closed immersion.  Hence the $\fppf$
quotient $Q=P/N$ is represented by a finitely presented $R$-group scheme,
and $P\to Q$ is proper and faithfully flat.  The relation morphism is the
pullback of the diagonal of $Q$, so faithfully flat descent of closed
immersions shows that $Q$ is separated.  Flatness of $Q$ over $R$ descends
from $P$.  Since $P\to Q$ is surjective and $P$ is proper over $R$, the
morphism $Q\to\Spec R$ is universally closed; being separated and finitely
presented, it is proper.  The flatness of $P$ and $N$ makes their fibre
dimensions constant.  Since their generic dimensions are equal, the
dimension formula for the $N_{\bar s}$-torsor
$P_{\bar s}\to Q_{\bar s}$ gives $\dim Q_{\bar s}=0$.  Thus $Q$ is
quasi-finite and proper, hence finite; its flatness makes it finite flat.
\end{proof}

\begin{theorem}\label{thm:group-devissage}
Let $R$ and $P$ be as in \cref{prop:generic-devissage}.  There is an exact
sequence
\begin{equation}\label{eq:direct-extension}
  0\longrightarrow E\longrightarrow P\xrightarrow{f}B
  \longrightarrow0,
\end{equation}
where $E$ is finite flat and $B$ is an abelian scheme.  Moreover,
\begin{equation}\label{eq:dual-general-group}
  \Dual P\simeq[B^\vee/E^{\mathrm D}].
\end{equation}
Consequently, $\Dual P$ is algebraic, proper, flat, and finitely presented
over $R$, with finite diagonal, in every characteristic.
\end{theorem}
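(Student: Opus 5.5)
Following the introduction's outline, start from \cref{prop:generic-devissage}: there is an exact sequence $0\to N\to P\to Q\to0$ with $N$ quasiabelian and $Q$ finite flat. Choose an $R$-ample invertible sheaf $\cL$ on $N$, which exists because $N$ is projective by \cref{lem:projective-cm}. \Cref{thm:polarization-quotient} then gives a finite faithfully flat homomorphism $q_\cL\colon N\to B:=A^\vee$ with finite flat kernel $F$, where $A$ is the abelian normalization of $N$.

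Let $m=\operatorname{rk}(Q)$. By Deligne's theorem, $[m]_Q=0$ for the finite flat commutative group scheme $Q$. Hence $[m]_P$ kills $Q$ and factors uniquely as $[m]_P=j\circ r$ for a homomorphism $r\colon P\to N$, where $j\colon N\hookrightarrow P$ is the inclusion. This factorization is a morphism of fppf sheaves, and it is a morphism of schemes because $N$ is the kernel of $P\to Q$.

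Put $f:=q_\cL\circ r\colon P\to B$. Then $f\circ j=q_\cL\circ[m]_N=[m]_B\circ q_\cL$, and this composite is finite faithfully flat since $[m]_B$ is an isogeny of the abelian scheme $B$. Consequently $f$ is surjective as a map of fppf sheaves: any section of $B$ lifts fppf-locally through $[m]_B\circ q_\cL$. Set $E=\Ker(f)$. This is a closed subgroup scheme of $P$. It is proper and quasi-finite: on geometric fibres, $E\cap N=\Ker([m]_B q_\cL)$ is finite and $E/(E\cap N)\subseteq Q$ is finite. Therefore $E$ is finite.

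\textbf{Flatness of $E$ is the main point.} I would prove that $f$ itself is flat. The cleanest route reuses the argument of \cref{prop:conditional-polarization}. The morphism $f$ is finite, since it is proper and quasi-finite by the fibre computation above, and it is surjective. Its geometric fibres $f_{\bar s}\colon P_{\bar s}\to B_{\bar s}$ go from a Cohen--Macaulay group scheme (\cite[Expos\'{e}~VII$_B$, Corollaire~5.5.1]{SGA3I}) to a regular variety. The source is equidimensional of dimension $\dim B_{\bar s}$, because all of its components are translates of the identity component. Miracle flatness therefore makes $f_{\bar s}$ flat, and the fibrewise flatness criterion \cite[Tag~039A]{Stacks} gives flatness of $f$. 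Then $E$, as the base change of $f$ along the zero section, is finite flat, and this proves \eqref{eq:direct-extension}.

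For \eqref{eq:dual-general-group}, apply \cref{lem:brochard-quotient} to $0\to E\to P\to B\to0$. Its only input, $\ExtSheaf{1}{E}=0$, holds in every characteristic by \cref{prop:ext-vanishing}. The lemma yields $\Dual P\simeq[B^\vee/E^{\mathrm D}]$ together with properness, flatness and finiteness of the diagonal. Algebraicity and finite presentation come either from the quotient presentation by the finite flat groupoid or from \cref{thm:brochard-known}.
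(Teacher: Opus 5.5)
Your proof is correct. Up to the definition $f=q_\cL\circ r$ it coincides with the paper's; the difference is in how you get flatness of $E$. The paper never proves flatness of $f$ directly. It puts $g=f|_N=[m]_B\circ q_\cL$ and $K_0=\Ker(g)$, and shows by a lifting argument that $0\to K_0\to E\to Q\to0$ is exact: lift a section of $Q$ fppf-locally to $p\in P$, then add $n\in N$ with $g(n)=-f(p)$. So $E$ is a $K_0$-torsor over the finite flat $Q$, hence finite flat, and $P\to B$ is then the associated $E$-torsor. You instead prove $f$ flat first, by rerunning the miracle-flatness argument of \cref{prop:conditional-polarization} on $P$: a finite map from a Cohen--Macaulay source to a regular target with equal local dimensions, followed by the fibrewise criterion. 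You then recover $E$ as the fibre over the zero section. The paper's route is formal once $g$ is known to be finite faithfully flat. It needs no further Cohen--Macaulay or dimension input beyond what already went into \cref{thm:polarization-quotient}, and it does not need a separate finiteness argument for $f$. It also exhibits $E$ explicitly as an extension of $Q$ by $K_0$, so $\operatorname{rk}E=\operatorname{rk}(K_0)\operatorname{rk}(Q)$. Your route is heavier but proves a more robust fact: any finite surjective homomorphism from $P$ onto an abelian scheme of the same relative dimension is automatically flat. In particular you only need $q_\cL$ to be finite and surjective, not flat. Two steps in your version should be stated explicitly. First, equality of local dimensions at every point of $P_{\bar s}$, not just $\dim P_{\bar s}=\dim B_{\bar s}$, follows from equidimensionality together with finiteness of $f_{\bar s}$. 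Second, flatness of $f_{\bar s}$ descends to $f_s$ along the faithfully flat map $\bar s\to s$ before you apply the fibrewise criterion.
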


\begin{proof}
Take \eqref{eq:generic-devissage}, and let $u:A\to N$ be the normalization
from \cref{prop:normalization}.  Since $N$ is projective, choose an $R$-ample
invertible sheaf $\cL$ on $N$, put $B=A^\vee$, and let
$q=q_\cL:N\to B$ be the polarization quotient from
\cref{thm:polarization-quotient}; thus
$q\circ u=\phi_{u^*\cL}$.  Put
$m=\operatorname{rk}(Q)$.  Deligne's rank-annihilation theorem for finite
locally free commutative group schemes gives $[m]_Q=0$, see
\cite[\S1, page 4]{TateOort}.  Hence $[m]_P$ factors uniquely through $N$:
\[
  [m]_P:P\xrightarrow{r}N\lhook\joinrel\longrightarrow P.
\]
Set $f=q\circ r$.  On $N$ one has
\[
  g:=f|_N=q\circ[m]_N=[m]_B\circ q,
\]
so $g$ is finite faithfully flat.

Let $E=\Ker(f)$ and $K_0=\Ker(g)$.  There is an exact sequence
\begin{equation}\label{eq:kernel-collapse}
  0\longrightarrow K_0\longrightarrow E\longrightarrow Q
  \longrightarrow0.
\end{equation}
To see surjectivity on the right, lift a section of $Q$ $\fppf$-locally to
$p\in P$, then choose $n\in N$ with $g(n)=-f(p)$.  The section $p+n$ lies in
$E$ and maps to the prescribed section of $Q$; the kernel is  $K_0$.
Thus $E\to Q$ is a $K_0$-torsor and $E$ is finite flat.  Since the restriction
$g$ is an $\fppf$ epimorphism, so is $f$, and $P\to B$ is the $E$-torsor
associated with its kernel.  This proves \eqref{eq:direct-extension}.

The quotient presentation \eqref{eq:dual-general-group} is
\cref{lem:brochard-quotient}.  Its finite faithfully flat cover is $B^\vee$,
which gives properness and flatness.  Algebraicity and finite presentation
follow from \cref{thm:brochard-known}, and the quotient presentation has finite
diagonal.
\end{proof}

The construction of \eqref{eq:direct-extension} uses neither an Ext vanishing
nor an assumption on $2$.  It is the discrete valuation analogue of Raynaud's
\cite[Proposition~3.11]{BrochardDuality}.

\begin{corollary}\label{cor:ext2-general-group}
There is a natural monomorphism
\[
  \ExtSheaf{2}{P}\longrightarrow\ExtSheaf{2}{E}.
\]
In particular, if $2\in R^\times$, then $\ExtSheaf{2}{P}=0$.
\end{corollary}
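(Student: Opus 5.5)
The plan is to imitate the proof of \cref{prop:ext2-quasiabelian}, replacing the polarization sequence by the finite-by-abelian presentation \eqref{eq:direct-extension} from \cref{thm:group-devissage}.

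First, apply the local-to-global long exact sequence of $\fppf$ Ext sheaves $\ExtSheaf{i}{-}$ to
\[
  0\longrightarrow E\longrightarrow P\xrightarrow{f}B\longrightarrow0.
\]
This sequence is exact in the category of abelian $\fppf$ sheaves, so there is a long exact sequence of presheaves $T\mapsto\operatorname{Ext}^i_{T,\fppf}(-_T,\Gm)$. Base change to $T$ preserves exactness of the sequence, because restriction to the big site over $T$ is exact. Sheafification is exact, so we obtain an exact segment
\[
  \ExtSheaf{2}{B}\longrightarrow\ExtSheaf{2}{P}\longrightarrow\ExtSheaf{2}{E}.
\]

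Second, use the fact that $B$ is an abelian scheme. By \cref{prop:ext-vanishing}(ii), which is the Ribeiro--Rosengarten vanishing, $\ExtSheaf{2}{B}=0$. Hence the map $\ExtSheaf{2}{P}\to\ExtSheaf{2}{E}$ induced by the inclusion $E\hookrightarrow P$ is a monomorphism. It is natural in the sense that it is restriction along $E\to P$. If one also wants independence of the chosen presentation, one would compare two presentations through a common refinement; I would only claim naturality relative to the presentation.

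Third, suppose $2\in R^\times$. Since $E$ is finite flat, \cref{prop:ext-vanishing}(i) gives $\ExtSheaf{2}{E}=0$. The monomorphism then forces $\ExtSheaf{2}{P}=0$.

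The only point that needs care is the first step. We must make sure the presheaf long exact sequence is valid for the big-site Ext groups after arbitrary base change $T\to\Spec R$, and that sheafifying commutes with taking the long exact sequence. Both follow from exactness of restriction and of sheafification. The fact that $B$ is an abelian scheme over $R$ is essential, since over $T$ the global group $\operatorname{Ext}^2(B_T,\Gm)\simeq H^1(T,B_T^\vee)$ need not vanish, as in \eqref{eq:global-ext2-abelian}. Only its sheafification vanishes, which is exactly what the argument uses.
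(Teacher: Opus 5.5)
Your proposal is correct and follows essentially the same route as the paper: take the segment $\ExtSheaf{2}{B}\to\ExtSheaf{2}{P}\to\ExtSheaf{2}{E}$ of the long Ext sequence of \eqref{eq:direct-extension}, kill the left term by Ribeiro--Rosengarten, and, when $2\in R^\times$, kill the right term by Breen. Your justification of the sheaf-level long exact sequence, via exactness of restriction and of sheafification, spells out a step the paper leaves implicit.
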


\begin{proof}
The long Ext sequence of \eqref{eq:direct-extension} contains
\[
  \ExtSheaf{2}{B}\longrightarrow
  \ExtSheaf{2}{P}\longrightarrow
  \ExtSheaf{2}{E}.
\]
The left-hand term vanishes by \cref{prop:ext-vanishing}, which gives the
monomorphism.  If $2\in R^\times$, the right-hand term vanishes by the same
proposition.
\end{proof}

\begin{proof}[Proof of \cref{thm:intro-group-structure}]
This is \cref{thm:group-devissage}.
\end{proof}

We now pass to group stacks.  For a proper flat commutative group stack
$\cG$ with finite flat inertia, the group sheaf of isomorphism classes
$P=\HHz(\cG)$ is a proper flat finitely presented group algebraic space by
the proof of \cite[Proposition~2.14]{BrochardDuality}; over a discrete
valuation ring it is a scheme by
\cref{lem:projective-cm}.

\begin{proposition}\label{prop:local-stack-structure}
Let $R$ be a discrete valuation ring and let $\cG$ be a proper flat finitely
presented algebraic commutative group stack over $R$ with finite flat inertia.
There is an exact sequence
\[
  0\longrightarrow\cK\longrightarrow\cG\longrightarrow B
  \longrightarrow0,
\]
where $B$ is an abelian scheme and both
\[
  \HHm(\cK)=\HHm(\cG),\qquad \HHz(\cK)=E
\]
are finite flat group schemes.
\end{proposition}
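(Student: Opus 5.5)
We compose the canonical projection $\cG\to\HHz(\cG)$ from \eqref{eq:canonical-sequence} with the finite-by-abelian quotient of \cref{thm:group-devissage}, and take $\cK$ to be the kernel. Put $P=\HHz(\cG)$. By the discussion preceding the statement, $P$ is a proper flat finitely presented commutative $R$-group scheme. \Cref{thm:group-devissage} therefore gives an exact sequence
\[
  0\longrightarrow E\longrightarrow P\xrightarrow{f}B\longrightarrow0
\]
with $E$ finite flat and $B$ an abelian scheme. Let $\pi\colon\cG\to P$ be the canonical morphism, which is essentially surjective fppf locally, and set $\cK:=\Ker(f\circ\pi)=\cG\times_B 0$. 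This is the fibre product of group stacks, and it is again an algebraic commutative group stack.

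Next we check exactness and compute the cohomology sheaves of $\cK$. In Deligne's complex language, $\cG^\flat\to B$ is a morphism of complexes in degrees $[-1,0]$ with $B$ in degree $0$. Its fibre $\cK^\flat$ sits in a long exact sequence
\[
  0\to\HHm(\cK)\to\HHm(\cG)\to\HHm(B)=0\to\HHz(\cK)\to\HHz(\cG)\xrightarrow{f}B\to\cdots.
\]
Since $f$ is an fppf epimorphism, $f\circ\pi$ is essentially surjective locally, so the sequence $0\to\cK\to\cG\to B\to0$ is exact in the sense of \cite[Definition~2.11]{BrochardDuality}. It also gives $\HHm(\cK)=\HHm(\cG)$ and $\HHz(\cK)=\Ker(f)=E$. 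Concretely, objects of $\cK$ are pairs consisting of an object $x$ of $\cG$ and a trivialization of $f(\pi(x))$ in the sheaf $B$. Because $B$ is a sheaf (a setoid), such a trivialization is a condition rather than extra data. Hence automorphisms of the neutral object are unchanged, and the isomorphism classes form $E$.

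Finally we verify finite flatness. $\HHz(\cK)=E$ is finite flat by \cref{thm:group-devissage}. For $\HHm(\cG)$, the inertia of $\cG$ is finite flat over $\cG$ by hypothesis, and $\HHm(\cG)$ is its pullback along the neutral section $R\to\cG$. It is therefore finite flat over $R$, and it is a group scheme because it is an algebraic space finite over $R$.

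The main obstacle is bookkeeping rather than geometry. One must make sure that the fibre-product stack is the kernel in Brochard's sense of exactness, and that no additional automorphisms appear. Both points reduce to $B$ having trivial $\HHm$, that is, to $B$ being a sheaf and not a stack. For the identification of kernels of morphisms of Picard stacks with mapping fibres of the corresponding complexes, I would cite \cite[Theorem~2.5]{BrochardDuality} and \cite[Expos\'{e}~XVIII, \S1.4]{SGA4}.
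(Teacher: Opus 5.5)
Your proposal is correct and follows the paper's proof: you put $P=\HHz(\cG)$, take the finite-by-abelian quotient $P\to B$ of \cref{thm:group-devissage}, define $\cK$ as the kernel of $\cG\to P\to B$, and read off $\HHm(\cK)=\HHm(\cG)$ and $\HHz(\cK)=E$ from the cohomology sequence. Your additional remarks make explicit what the paper leaves implicit: $\HHm(B)=0$, so the trivialization is a condition rather than extra data, and $\HHm(\cG)$ is finite flat because it is the pullback of the inertia along the neutral section.
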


\begin{proof}
Put $P=\HHz(\cG)$ and take the quotient $P\to B$ of
\cref{thm:group-devissage}.  Define $\cK$ as the kernel of the composite
$\cG\to P\to B$.  The cohomology sequence gives the displayed
identifications.
\end{proof}

\begin{theorem}\label{thm:local-stack}
Let $R$ be a discrete valuation ring with $2\in R^{\times}$.  Let $\cG$ be a
proper flat finitely presented algebraic commutative group stack over $R$, and
assume that $H:=\HHm(\cG)$ is finite flat.  Then $\Dual{\cG}$ is algebraic,
proper, flat, and finitely presented over $R$, with finite diagonal.  More
precisely, the canonical sequence dualizes to an exact sequence
\begin{equation}\label{eq:local-stack-dual}
  0\longrightarrow \Dual P\longrightarrow \Dual{\cG}
  \longrightarrow \CartierDual H\longrightarrow0,
  \qquad P:=\HHz(\cG).
\end{equation}
\end{theorem}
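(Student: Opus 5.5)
We dualize the canonical sequence \eqref{eq:canonical-sequence}
\[
  0\longrightarrow\BG{H}\longrightarrow\cG\longrightarrow P\longrightarrow0,
\]
where $H=\HHm(\cG)$ and $P=\HHz(\cG)$. First we identify the ends. The group $P$ is a proper flat finitely presented commutative group scheme over $R$, by the discussion preceding \cref{prop:local-stack-structure}. Since $H$ is finite flat, Cartier duality combined with $\ExtSheaf{1}{H}=0$ (\cref{prop:ext-vanishing}) gives $\Dual{\BG H}\simeq\CartierDual H$. In the notation of \eqref{eq:dual-truncation}, we compute $\tau_{\le0}\RHom(H[1],\Gm[1])=\tau_{\le0}\RHom(H,\Gm)$; its $\HHm$ vanishes and its $\HHz=\CartierDual H$.

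The next step is exactness of the dual sequence. By \cref{lem:dual-exactness}(ii), the sequence
\[
  0\to\Dual P\to\Dual\cG\to\CartierDual H\to0
\]
is exact as soon as the obstruction $\CartierDual H\to\ExtSheaf{2}{P}$ vanishes. \cref{cor:ext2-general-group} shows $\ExtSheaf{2}{P}=0$ when $2\in R^\times$. Concretely, the monomorphism $\ExtSheaf{2}{P}\to\ExtSheaf{2}{E}$ exists because $\ExtSheaf{2}{B}=0$, and Breen's vanishing kills $\ExtSheaf{2}{E}$. So the obstruction is zero and \eqref{eq:local-stack-dual} is exact. Alternatively, \cref{lem:dual-exactness}(i) applies directly: $\ExtSheaf{2}{\HHz(P)}=0$ and $\HHm(P)=0$.

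The geometric properties now follow by transferring them along the sequence. \cref{thm:group-devissage} gives that $\Dual P\simeq[B^\vee/E^{\mathrm D}]$ is algebraic, proper, flat and finitely presented with finite diagonal. \cref{thm:brochard-known} gives algebraicity and finite presentation of $\Dual\cG$, with affine diagonal. The map $\Dual\cG\to\CartierDual H$ is an epimorphism whose kernel is $\Dual P$, so it is a $\Dual P$-torsor over the finite flat scheme $\CartierDual H$. Indeed, after an fppf cover $T\to\CartierDual H$ trivializing it, $\Dual\cG\times_{\CartierDual H}T\simeq\Dual P\times_R T$.

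Flatness and local finite presentation of $\Dual\cG\to\CartierDual H$ descend from the trivialization. Universal closedness descends as well, since properness of $\Dual P$ gives universal closedness after the cover. Composing with the finite flat map $\CartierDual H\to\Spec R$ yields flatness and universal closedness of $\Dual\cG$ over $R$. Quasi-compactness follows in the same way.

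The main obstacle is separatedness and finiteness of the diagonal. I would handle it by factoring the diagonal of $\Dual\cG$ through $\Dual\cG\times_{\CartierDual H}\Dual\cG$. The relative diagonal over $\CartierDual H$ is fppf-locally the diagonal of $\Dual P$, hence finite. The second factor is a base change of the diagonal of the finite, hence separated, scheme $\CartierDual H$, so it is a closed immersion. The composite is therefore finite, which gives separatedness. Together with finite type and universal closedness, this gives properness by \cite[Tag~0CQK]{Stacks}.

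If making the torsor structure precise as a morphism of algebraic stacks is delicate, the fallback is different. We would use that the diagonal is affine by \cref{thm:brochard-known}, that it is proper by the valuative argument on the torsor pieces, and hence that it is finite.
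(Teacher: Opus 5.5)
Your proof is correct and follows the paper's route: you dualize the canonical sequence, kill the obstruction with $\ExtSheaf{2}{P}=0$ from \cref{cor:ext2-general-group}, and transfer properness and flatness along the $\Dual P$-torsor $\Dual{\cG}\to\CartierDual H$, taking algebraicity and finite presentation from \cref{thm:brochard-known}. The only variation is the diagonal: the paper gets finiteness from properness plus quasi-finiteness via the finite inertia $\CartierDual P$, whereas your factorization through $\Dual{\cG}\times_{\CartierDual H}\Dual{\cG}$ obtains a finite diagonal directly from the finite diagonal of $\Dual P\simeq[B^\vee/E^{\mathrm D}]$, and in doing so makes explicit the separatedness that the paper leaves implicit.
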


\begin{proof}
By \cref{thm:group-devissage,cor:ext2-general-group},
$\ExtSheaf{2}{P}=0$ and $\Dual P$ is proper and flat.  The exactness of
\eqref{eq:local-stack-dual} follows from part~(ii) of
\cref{lem:dual-exactness} applied to
\eqref{eq:canonical-sequence}; here
$\Dual{\BG H}=\CartierDual H$.  The morphism
$\Dual{\cG}\to\CartierDual H$ is a $\Dual P$-torsor, hence $\fppf$-locally a
projection.  The right-hand term is finite flat.  It follows
from \eqref{eq:local-stack-dual} that $\Dual{\cG}$ is proper and flat.
Algebraicity and finite presentation are supplied by
\cref{thm:brochard-known}.  Finally,
$\HHm(\Dual{\cG})=\CartierDual P$ is finite by the same theorem.  Since
$\Dual{\cG}$ is proper, its diagonal is proper.  Its geometric fibres are
empty or torsors under the finite inertia group, so the diagonal is
quasi-finite and therefore finite.
\end{proof}

\section{The global duality theorem}\label{sec:global}

We first globalize the characteristic-free result for group algebraic spaces.

\begin{theorem}\label{thm:global-group-duality}
Let $S$ be a scheme and let $P$ be a proper flat finitely presented
commutative group algebraic space over $S$.  Then $\Dual P$ is algebraic,
proper, flat, and finitely presented over $S$, with finite diagonal.  The
evaluation morphism
\[
  e_P:P\longrightarrow\DoubleDual P
\]
is an isomorphism.
\end{theorem}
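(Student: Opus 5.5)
Brochard's \cref{thm:brochard-known} already gives algebraicity, finite presentation, affine diagonal and proper geometric fibres over any $S$, and flatness of $\Dual P$ because $P$ is an algebraic space. Formation of the dual also commutes with base change. So the plan is to reduce the remaining assertions (properness, finiteness of the diagonal, and biduality) to the discrete valuation ring case treated in \cref{thm:group-devissage}, using valuative criteria and then a fibrewise argument.

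\emph{Standard reductions.} The first step is to pass to $S$ affine and, by standard limit arguments (finite presentation of $P$ and of $\Dual P$), to $S$ noetherian of finite type over $\mathbf Z$. Since $\Dual P$ is of finite type with affine (hence quasi-compact, separated) diagonal, properness of the diagonal and of $\Dual P\to S$ can be tested by the valuative criteria, allowing extensions of the discrete valuation ring. For this we only need to consider morphisms $\Spec V\to S$ with $V$ a discrete valuation ring. By base change compatibility, $\Dual P\times_S V=\Dual{P_V}$. By \cref{thm:group-devissage}, $\Dual{P_V}\simeq[B^\vee/E^{\mathrm D}]$ is proper with finite diagonal. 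Applying the existence and uniqueness parts of the valuative criterion (for stacks, after a finite extension of $V$) therefore yields universal closedness and properness of the diagonal. Since the diagonal is affine and proper, it is finite. Together with finite type, this gives properness of $\Dual P$.

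\emph{Biduality.} Having shown that $\Dual P$ is proper, flat and finitely presented with finite diagonal, I would apply \cref{thm:brochard-known} to it: $\DoubleDual P$ is algebraic and finitely presented, and its formation commutes with base change. The evaluation map $e_P$ is then a morphism of finitely presented stacks over $S$ whose formation is compatible with base change, so it suffices to check that it is an isomorphism on geometric fibres and then spread this out. On a geometric fibre $P_{\bar s}$ is an extension $0\to E\to P_{\bar s}\to B\to 0$ of an abelian variety by a finite group scheme, and biduality over a field follows from Brochard's results for such extensions (his Remark~4.14 shows that the biduality statement follows from the properness–flatness statement, which we have now established). Alternatively, I would compute it through \cref{lem:brochard-quotient}: dualizing $[B^\vee/E^{\mathrm D}]$ via the complex $[E^{\mathrm D}\to B^\vee]$ and using $\DoubleDual B\simeq B$ and $(E^{\mathrm D})^{\mathrm D}=E$ recovers $P$.

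\emph{Main obstacle.} The main difficulty is the passage from fibrewise to global isomorphism for stacks. Here I would invoke Brochard's criterion directly: his Remark~4.14 asserts that, once $\Dual P$ is proper and flat with finite diagonal over an arbitrary base, $e_P$ is an isomorphism. So the key work is the properness, and the delicate point there is correctly applying the valuative criterion for algebraic stacks, which requires possibly extending $V$. This is harmless because the dual commutes with any base change $V\to V'$.
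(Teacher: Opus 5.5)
Your argument for algebraicity, flatness, properness and the diagonal is the paper's argument. It uses Brochard's representability and flatness results, the valuative criteria tested on $\Dual{P_V}\simeq[B^\vee/E^{\mathrm D}]$, and Noetherian approximation. You deduce finiteness of the diagonal from ``affine and proper'' rather than from ``proper with fibres that are torsors under the finite inertia''; that variant is harmless.

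\textbf{The gap is in the biduality step.} You use the principle that a morphism of finitely presented stacks, compatible with base change, is an isomorphism as soon as it is one on geometric fibres. That principle is false as stated. It needs flatness of the source: consider $\Spec k\to\Spec k[\epsilon]$ over $\Spec k[\epsilon]$. It also needs more, because $\DoubleDual P$ is a priori a stack, so you must exclude inertia that is invisible on geometric fibres.

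Citing \cite[Remark~4.14]{BrochardDuality} ``over an arbitrary base'' does not close this. That remark lives in the setting of Brochard's conjecture, where $2$ is invertible. Its fibrewise input is dualizability of group stacks over algebraically closed fields, which genuinely fails in characteristic $2$; see the $\mu_2$-gerbe $\cG_\kappa$ over $\alpha_2$ in the last section. The content of the statement is that no hypothesis on $2$ is needed, so this citation does not cover points of $S$ of residue characteristic $2$. Likewise, over a field the input you want is not Remark~4.14. It is Brochard's dualizability of proper group schemes, \cite[Theorem~4.13(i)]{BrochardDuality}, which holds in every characteristic.

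\textbf{The missing globalization is short, and it is what the paper does.}
\begin{enumerate}
\item Put $K=\HHm(\DoubleDual P)$; this is a commutative group algebraic space locally of finite type over $S$.
\item Fibrewise biduality, together with compatibility of duality and inertia with base change, gives $K_{\bar s}=0$ at every geometric point.
\item Unramifiedness is detected on fibres, so $K\to S$ is unramified and its unit section is an open immersion.
\item That section is surjective, hence an isomorphism, so $K=0$.
\item Thus $\DoubleDual P$ has trivial inertia and is an algebraic space.
\item Only now does the fibrewise criterion \cite[Lemma~3.20]{BrochardDuality} apply to $e_P$. It uses that $P$ is flat and finitely presented and that $\DoubleDual P$ is locally of finite type.
\end{enumerate}
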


\begin{proof}
Suppose first that $S$ is locally noetherian.  By
\cref{thm:brochard-known}, the dual is algebraic, flat, and finitely presented,
with affine diagonal and proper geometric fibres.  Its inertia is finite by
\cite[Theorem~3.15]{BrochardDuality}.

Let $V$ be a discrete valuation ring with a morphism $\Spec V\to S$.  The
base change $P_V$ is a proper flat group algebraic space and hence a scheme by
\cref{lem:projective-cm}.  The direct structure theorem and quotient formula
of \cref{thm:group-devissage} show that $\Dual{P_V}$ is proper over $V$.
Duality commutes with base change, so every discrete-valuation-ring test for
$\Dual P\to S$ satisfies existence and uniqueness.  The  valuative
criteria \cite[Tags~0E80 and~0H2C]{Stacks} make $\Dual P$ separated and
universally closed, hence proper.  Its diagonal is proper and has fibres
which are empty or torsors under finite inertia; it is therefore quasi-finite
and thus finite.

For arbitrary $S$, the assertion is local on the base.  Write $S=\Spec B$
and express $B$ as a filtered colimit of finitely generated
$\mathbf Z$-subalgebras.  Noetherian approximation for algebraic spaces and
their morphisms descends $P$, its group structure, and, at a finite stage,
properness and flatness
\cite[Appendix~B, Propositions~B.2--B.3]{RydhApproximation}.  Apply the
locally noetherian case and pull back, using compatibility of the dual with
base change.

It remains to prove biduality.  The representability theorem applied again
makes $\DoubleDual P$ algebraic and finitely presented.  For every geometric
point $\bar s\to S$, the proper group scheme $P_{\bar s}$ is duabelian in
Brochard's terminology and is dualizable without any hypothesis on $2$ by
\cite[Theorem~4.13(i)]{BrochardDuality}.  Thus $e_P$ is an isomorphism on
every geometric fibre.

Put $K=\HHm(\DoubleDual P)$.  Since $\DoubleDual P$ is algebraic, $K$ is a
commutative group algebraic space locally of finite type over $S$
\cite[Tag~050Q]{Stacks}.  Formation of inertia commutes with base change
\cite[Tag~06PQ]{Stacks}.  Hence, for every geometric point $\bar s\to S$,
compatibility of duality with base change and the preceding fibrewise
biduality give
\[
  K_{\bar s}\simeq \HHm(\DoubleDual{P_{\bar s}})=0.
\]
The unramified locus commutes with arbitrary base change
\cite[Tag~05W2]{Stacks}, so $K\to S$ is unramified.  Its identity section
$S\to K$, being a base change of the diagonal, is therefore an open immersion
by \cite[Tag~05W1]{Stacks}.  Since $K_{\bar s}=0$, this section is surjective
on points over every algebraically closed field, and hence is surjective by
\cite[Tag~0487]{Stacks}.  An open subspace whose underlying set is all of
$|K|$ is $K$ itself
\cite[Tag~03BZ]{Stacks}.  Thus $S\to K$ is an isomorphism and $K=0$.

Translation in the group stack $\DoubleDual P$ now identifies every
stabilizer with $K$.  Thus $\DoubleDual P$ has trivial inertia and is an
algebraic space by \cite[Tag~04SZ]{Stacks}.  Finally,
\cite[Lemma~3.20]{BrochardDuality}, applied directly to
\[
  e_P:P\longrightarrow\DoubleDual P,
\]
shows that $e_P$ is an isomorphism: its source is flat and finitely
presented, its target is locally of finite type, and it is an isomorphism on
every geometric fibre.
\end{proof}

\begin{proof}[Proof of \cref{thm:intro-group-duality}]
This is \cref{thm:global-group-duality}.
\end{proof}

We now return to the case of a stack.  We first prove flatness of the dual;
\cref{prop:global-properness} then proves properness.

\begin{proposition}\label{prop:global-flatness}
Let $S$ be a locally noetherian scheme with
$2\in\Gamma(S,\cO_S)^{\times}$.
Let $\cG$ be as in \cref{thm:intro-brochard}.  Then $\Dual{\cG}$ is flat over
$S$.
\end{proposition}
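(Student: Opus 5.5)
Flatness of $\Dual{\cG}\to S$ is local on $S$ and can be checked at the local rings of $S$. The plan is to reduce to the case where $S$ is the spectrum of a noetherian local ring, and then apply a valuative-type criterion for flatness.

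By \cref{thm:brochard-known}, $\Dual{\cG}$ is algebraic and of finite presentation, and its formation commutes with base change. This means the fibrewise flatness criterion is not directly usable, since the target $S$ is arbitrary noetherian rather than smooth. Instead I would use the following fact. A finitely presented algebraic stack $\cX$ over a reduced noetherian base is flat if it is flat after every base change $\Spec V\to S$ with $V$ a discrete valuation ring, and if its fibres have locally constant Hilbert-type invariants. This is too weak in general, so I would rather proceed as follows.

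The canonical sequence \eqref{eq:canonical-sequence} dualizes, by \cref{lem:dual-exactness}(ii), to an exact sequence
\[
  0\longrightarrow \Dual P\longrightarrow \Dual{\cG}\longrightarrow \CartierDual H\longrightarrow 0,
\]
where $P=\HHz(\cG)$ and $H=\HHm(\cG)$. This requires showing that the obstruction $\CartierDual H\to\ExtSheaf{2}{P}$ vanishes. So the key global step is to prove $\ExtSheaf{2}{P}=0$ over $S$ whenever $2$ is invertible.

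The fppf sheaf $\ExtSheaf{2}{P}$ is the sheafification of a presheaf, so it vanishes if its stalks at strictly henselian local rings vanish. The difficulty is that \cref{cor:ext2-general-group} is only available over a discrete valuation ring. Over a general base, my first attempt would be to get a finite-by-abelian dévissage of $P$ fppf-locally. Failing that, I would use Raynaud's dévissage in Brochard's form \cite[Proposition~3.11]{BrochardDuality}. Since $P$ is an algebraic space, Brochard's proof of flatness of $\Dual P$ (\cref{thm:brochard-known}(iii)) suggests that his argument already controls these Ext sheaves. I would then combine it with \cref{prop:ext-vanishing}: $\ExtSheaf{2}{}$ of abelian schemes vanishes, and for finite flat groups $\ExtSheaf{2}{}$ and $\ExtSheaf{3}{}$ vanish when $2$ is invertible. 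This yields $\ExtSheaf{2}{P}=0$.

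Granting exactness, $\Dual{\cG}\to\CartierDual H$ is a $\Dual P$-torsor, so it is fppf-locally on $\CartierDual H$ isomorphic to the projection $\Dual P\times\CartierDual H\to\CartierDual H$. Here $\Dual P$ is flat over $S$ by \cref{thm:global-group-duality}. The remaining input is that $\CartierDual H$ is finite flat, because $H$ is finite flat by the hypothesis on inertia. Flatness then descends: $\Dual{\cG}\to\CartierDual H$ is flat, and composing with the flat map $\CartierDual H\to S$ gives flatness of $\Dual{\cG}$.

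The main obstacle is the global vanishing of the obstruction, or of $\ExtSheaf{2}{P}$, away from discrete valuation rings. If a direct dévissage over $S$ is unavailable, I would fall back on a weaker statement. The connecting map $\CartierDual H\to\ExtSheaf{2}{P}$ is a homomorphism from a flat finite group scheme. Its vanishing can be tested after the base changes $\Spec V\to S$ for discrete valuation rings $V$, where \cref{cor:ext2-general-group} applies, together with schematic density arguments on the reduced locus. Passing to non-reduced $S$ remains the delicate point, and there I would try nilpotent deformation of extensions using the vanishing of $\ExtSheaf{3}{}$ for finite flat groups.
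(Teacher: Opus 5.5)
\paragraph{A genuine gap.} Your final step is fine: given an exact sequence $0\to\Dual P\to\Dual{\cG}\to\CartierDual H\to0$, the torsor argument gives flatness. The problem is the step everything hinges on. You never establish the vanishing of $\ExtSheaf{2}{P}$, or of the obstruction $\CartierDual H\to\ExtSheaf{2}{P}$, over a general locally noetherian $S$, and the routes you suggest do not work.
\begin{itemize}
  \item Raynaud's d\'{e}vissage \cite[Proposition~3.11]{BrochardDuality} is a statement over an Artinian local base with algebraically closed residue field. It is not available over $S$, nor fppf-locally on $S$, and no finite-by-abelian d\'{e}vissage of $P$ is known in that generality.
  \item Your fallback also fails. $\ExtSheaf{2}{P}$ is only an fppf sheaf, not a separated scheme. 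A homomorphism into it therefore cannot be shown to vanish by testing after base change to discrete valuation rings, or by schematic density.
  \item Fppf sheaves are not detected by stalks at strictly henselian local rings; that is an \'{e}tale-topology fact.
  \item The non-reduced case, which you leave to an unspecified deformation argument with $\ExtSheaf{3}{}$, is exactly where the content lies.
\end{itemize}

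\paragraph{The missing idea.} You never need the dual sequence over $S$ itself. By \cref{thm:brochard-known}, $\Dual{\cG}$ is algebraic and finitely presented over the locally noetherian $S$, and its formation commutes with base change. So the infinitesimal flatness criterion \cite[Expos\'{e}~IV, Th\'{e}or\`eme~5.6]{SGA1}, in the form Brochard uses to prove flatness of $\Dual P$, reduces the question to base changes $T=\Spec R\to S$ with $R$ Artinian local. After a faithfully flat local extension you may also assume the residue field is algebraically closed.

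Over such a $T$ the argument runs as follows:
\begin{itemize}
  \item $P=\HHz(\cG_T)$ is a scheme, being a thickening of a group algebraic space over a field.
  \item Raynaud's d\'{e}vissage gives $0\to F\to P\to A\to0$ with $F$ finite flat and $A$ an abelian scheme.
  \item \cref{prop:ext-vanishing} then gives $\ExtSheaf{2}{P}=0$.
  \item By \cref{lem:dual-exactness}(ii), the dual sequence is exact over $T$, and your torsor argument shows $\Dual{\cG_T}$ is flat over $T$.
\end{itemize}
The Artinian criterion then gives flatness over $S$. This is exactly the paper's proof. Your plan would prove something stronger, namely exactness of the dual sequence over all of $S$, but flatness does not require it.
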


\begin{proof}
By \cref{thm:brochard-known}, the dual is algebraic and finitely presented.
The infinitesimal flatness criterion
\cite[Expos\'{e}~IV, Th\'{e}or\`eme~5.6]{SGA1}, in the form used in the proof of
\cite[Theorem~3.14(3)]{BrochardDuality}, reduces the question to base changes
$T=\Spec R\To S$, where $R$ is an Artinian local ring with residue field
$k$.  Formation of the
dual commutes with base change, and flatness is fpqc local on the base.  We
may therefore make the faithfully flat local extension constructed in the
proof of \cite[Corollary~3.12]{BrochardDuality} and assume that the residue
field of $R$ is algebraically closed.
Write
\[
  H=\HHm(\cG_T),\qquad P=\HHz(\cG_T).
\]
The group algebraic space $P$ is a scheme.  Indeed, its special fibre is a
separated group algebraic space locally of finite type over the residue
field, hence is a scheme \cite[Tag~0B8F]{Stacks}.  Since the maximal ideal of
$R$ is nilpotent, $P_k\To P$ is a thickening, and an
algebraic-space thickening of a scheme is a scheme
\cite[Tag~05ZR]{Stacks}.

Raynaud's d\'{e}vissage over an Artinian local ring gives an exact sequence
\begin{equation}\label{eq:artin-devissage}
  0\longrightarrow F\longrightarrow P\longrightarrow A\longrightarrow0,
\end{equation}
where $F$ is finite flat and $A$ is an abelian scheme
\cite[Proposition~3.11]{BrochardDuality}.  Since $2$ is invertible on $R$,
\cref{prop:ext-vanishing} gives
\[
  \ExtSheaf{2}{F}=0,
  \qquad
  \ExtSheaf{2}{A}=0.
\]
The long Ext sequence therefore gives $\ExtSheaf{2}{P}=0$.  Hence the
obstruction homomorphism
\[
  \CartierDual H\longrightarrow\ExtSheaf{2}{P}
\]
vanishes.  By part~(ii) of
\cref{lem:dual-exactness}, there is an exact sequence
\[
  0\longrightarrow\Dual P\longrightarrow\Dual{\cG_T}
  \longrightarrow\CartierDual H\longrightarrow0.
\]
The morphism $\Dual{\cG_T}\to\CartierDual H$ is a $\Dual P$-torsor, hence
$\fppf$-locally a projection.
The group stack $\Dual P$ is flat by
\cite[Theorem~3.14(3)]{BrochardDuality}, and $\CartierDual H$ is finite flat.
Thus $\Dual{\cG_T}$ is flat over $T$.  The Artinian criterion now proves that
$\Dual{\cG}$ is flat over $S$.
\end{proof}

\begin{proposition}\label{prop:global-properness}
Under the hypotheses of \cref{prop:global-flatness}, the stack
$\Dual{\cG}$ has finite diagonal and is proper over $S$.
\end{proposition}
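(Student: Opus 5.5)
We follow the argument used for group algebraic spaces in \cref{thm:global-group-duality}: we check the valuative criteria over discrete valuation rings mapping to $S$, using the local theorem \cref{thm:local-stack}, and then deduce finiteness of the diagonal from properness together with finiteness of inertia.

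First, we collect what is already known. By \cref{thm:brochard-known}, $\Dual{\cG}$ is algebraic and finitely presented over $S$, with affine diagonal. By \cref{prop:global-flatness}, it is flat. Because $H=\HHm(\cG)$ is flat, \cref{thm:brochard-known}(ii) shows that the inertia $\HHm(\Dual{\cG})=\CartierDual{\HHz(\cG)}$ is finite over $S$. Since $\Dual{\cG}$ is a group stack, translation identifies the stabilizer of every object with a base change of this inertia group. Hence every fibre of the diagonal is empty or a torsor under a finite group. It follows that once the diagonal is known to be proper, it is quasi-finite and proper, hence finite.

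Next, let $V$ be a discrete valuation ring and $\Spec V\to S$ a morphism. The base change $\cG_V$ is again proper, flat and finitely presented, and its inertia $H_V$ is finite flat. Also $2\in V^\times$. By \cref{thm:local-stack}, $\Dual{\cG_V}$ is proper over $V$ with finite diagonal. Formation of the dual commutes with base change (\cref{thm:brochard-known}), so $\Dual{\cG}\times_S\Spec V\simeq\Dual{\cG_V}$.

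Now we apply the valuative criteria for algebraic stacks of finite type over a locally noetherian base, \cite[Tags~0E80 and~0H2C]{Stacks}, exactly as in the proof of \cref{thm:global-group-duality}. First we treat the diagonal, which is representable by algebraic spaces, of finite type and affine. A uniqueness or existence problem for a $V$-point of $\Delta_{\Dual{\cG}}$ pulls back to the diagonal of $\Dual{\cG_V}$, which is finite and therefore satisfies the valuative criterion. So the diagonal is proper, hence finite by the first step, and $\Dual{\cG}$ is separated. Then, since $\Dual{\cG}\to S$ is quasi-compact and of finite type, and every $\Frac(V)$-point extends (after a possible extension of discrete valuation rings) because $\Dual{\cG_V}$ is proper, the morphism is universally closed. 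It is therefore proper.

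The main obstacle is making the discrete-valuation-ring reduction legitimate. This requires three things. First, the existence part of the criterion must be allowed to pass to extensions of $V$; the Stacks versions permit this, and \cref{thm:local-stack} applies over any discrete valuation ring. Second, the finite-type and quasi-compactness hypotheses must hold; these come from finite presentation. Third, properness of the inertia is needed to convert the separatedness check into finiteness of the diagonal, and this is supplied by Brochard's finiteness of $\CartierDual{\HHz(\cG)}$. If $S$ is not noetherian this would require approximation, but the hypotheses of \cref{prop:global-flatness} already make $S$ locally noetherian.
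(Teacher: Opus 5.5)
Your proposal is correct and follows essentially the same route as the paper: finiteness of the inertia $\CartierDual{P}$ by Brochard's theorem, properness of $\Dual{\cG_V}$ over every discrete valuation ring via \cref{thm:local-stack} and base change, the noetherian valuative criteria, and ``proper and quasi-finite implies finite'' for the diagonal. The only difference is the order of bookkeeping. You verify the valuative criterion on the diagonal first and then universal closedness, whereas the paper obtains separatedness and universal closedness of $\Dual{\cG}$ together and then reads off properness of the diagonal.
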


\begin{proof}
Put $P=\HHz(\cG)$.  By \eqref{eq:hminus-dual},
$\HHm(\Dual{\cG})=\CartierDual P$, and this inertia group is finite by
\cite[Theorem~3.15]{BrochardDuality}, since $\HHm(\cG)$ is flat.

Let $V$ be a discrete valuation ring equipped with a morphism
$\Spec V\To S$.  Duality commutes with base change, and
\cref{thm:local-stack} applies directly over $V$.  Thus
$\Dual{\cG}\times_S\Spec V$ is proper over $V$, so every
discrete-valuation-ring test satisfies existence and uniqueness up to
isomorphism.
The stack $\Dual{\cG}$ is finitely presented and has affine, hence
quasi-compact and separated, diagonal by \cref{thm:brochard-known}.
The Noetherian valuative criteria
\cite[Tags~0E80 and~0H2C]{Stacks} therefore show that it is separated and
universally closed, respectively.  Hence it is proper over $S$.

Its diagonal is now proper.  Its geometric fibres are empty or torsors under
the finite inertia group $\CartierDual P$, so the diagonal is quasi-finite.  A
proper quasi-finite morphism is finite.
\end{proof}

\begin{theorem}\label{thm:global-main}
Let $S$ be a scheme on which $2$ is invertible.  Let $\cG$ be a proper, flat,
finitely presented algebraic commutative group stack over $S$ with finite flat
inertia.  Then $\Dual{\cG}$ is algebraic, proper, flat, and finitely presented
over $S$, and its diagonal is finite.
\end{theorem}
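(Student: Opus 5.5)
The plan is to reduce to the locally noetherian case, which is already handled, and then descend. Algebraicity and finite presentation of $\Dual{\cG}$ hold over an arbitrary base by \cref{thm:brochard-known}, and formation of the dual commutes with arbitrary base change. If $S$ is locally noetherian, \cref{prop:global-flatness} gives flatness and \cref{prop:global-properness} gives properness and finite diagonal, so the theorem holds there. It remains to pass to an arbitrary $S$ on which $2$ is invertible.

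The assertions are local on $S$, so I may take $S=\Spec B$ with $2\in B^\times$. I will write $B$ as the filtered colimit of its finitely generated $\mathbf Z[1/2]$-subalgebras $B_\lambda$; each $\Spec B_\lambda$ is noetherian with $2$ invertible. Since $\cG$ is finitely presented, noetherian approximation for algebraic stacks descends $\cG$ to a finitely presented algebraic stack $\cG_\lambda$ over some $B_\lambda$. The same approximation, applied at a larger index, descends the strictly commutative group structure: the multiplication, unit and inverse, the associativity and commutativity $2$-isomorphisms, their coherence identities, and the strictness condition. The cited results for algebraic spaces are \cite[Appendix~B]{RydhApproximation}; the stack versions are in Rydh's general approximation results. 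After enlarging $\lambda$ further, properness and flatness of $\cG_\lambda$ descend. Finiteness and flatness of the inertia $I_{\cG_\lambda}\to\cG_\lambda$ also descend, because inertia is finitely presented over $\cG_\lambda$ and its formation commutes with base change.

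Applying the locally noetherian case to $\cG_\lambda$ over $\Spec B_\lambda$ then gives the result for $\cG_\lambda$. Pulling back along $\Spec B\to\Spec B_\lambda$, base-change compatibility of the dual yields $\Dual{\cG}\simeq\Dual{\cG_\lambda}\times_{B_\lambda}B$. Properness, flatness, finite presentation and finiteness of the diagonal are all stable under base change, so they hold for $\Dual{\cG}$.

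The main obstacle is the approximation step for group stacks. Descending a strictly commutative group-stack structure together with the properties of its inertia requires more than the algebraic-space approximation cited in \cref{thm:global-group-duality}. I would first try working with the Picard stack as a stack with extra $1$- and $2$-morphisms, each finitely presented, and use the limit formalism for morphisms of finitely presented stacks (\cite[Tag~07SK]{Stacks}-type results). If that fails, an alternative is to descend the canonical sequence \eqref{eq:canonical-sequence} instead. In that approach one descends $H=\HHm(\cG)$ as a finite flat group, $P=\HHz(\cG)$ as a proper flat group space, and the extension class in $\operatorname{Ext}^2(P,H)$, which is a finitely presented datum.
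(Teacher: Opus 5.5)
Your proposal is correct and follows the paper's proof essentially step for step. You reduce to $S=\Spec B$ and write $B$ as a filtered colimit of finitely generated $\mathbf Z[1/2]$-subalgebras. You descend $\cG$, its strictly commutative group structure, properness, flatness, and finite flat inertia to a noetherian stage, apply \cref{prop:global-flatness,prop:global-properness} there, and pull back using base-change compatibility of the dual.

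The obstacle you flag is resolved in the paper by your first option, since \cite[Appendix~B, Propositions~B.2 and~B.3]{RydhApproximation} already treats algebraic stacks. There, B.2(ii) descends the stack itself, the Hom-limit equivalence B.2(i) descends the $1$- and $2$-morphisms and the coherence diagrams, and B.3 descends the properties of the stack and its inertia. So the fallback via $\operatorname{Ext}^2(P,H)$ is unnecessary.
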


\begin{proof}
When $S$ is locally noetherian, algebraicity and finite presentation follow
from \cref{thm:brochard-known}, flatness from
\cref{prop:global-flatness}, and properness and finiteness of the diagonal from
\cref{prop:global-properness}.

For an arbitrary base the assertion is Zariski local, so write
$S=\Spec B$.  Since $2\in B^{\times}$, express $B$ as a filtered colimit of
finitely generated $\mathbf Z[1/2]$-subalgebras $B_i$, and put
$S_i=\Spec B_i$.  Then $S=\varprojlim S_i$, with affine transition maps and
each $S_i$ noetherian.  By
\cite[Appendix~B, Proposition~B.2(ii)]{RydhApproximation}, after increasing
$i$ there is an algebraic stack $\cG_i$ of finite presentation over $S_i$
whose pullback is $\cG$.

The Hom-limit equivalence
\cite[Appendix~B, Proposition~B.2(i)]{RydhApproximation} lets us descend,
after a further increase of $i$, the multiplication, unit, inverse, and the
associativity, unit, inverse, and commutativity $2$-isomorphisms.  The same
equivalence makes the finitely many coherence diagrams commute at a finite
stage.  Thus $\cG_i$ may be taken to be a strictly commutative group stack.
Its inertia morphism pulls back to that of $\cG$.  By
\cite[Appendix~B, Proposition~B.3]{RydhApproximation}, after increasing $i$
again, $\cG_i\To S_i$ is proper and flat and its inertia is finite and flat.
Apply the locally noetherian case to $\cG_i$.  Formation of the dual commutes
with arbitrary base change by \cref{thm:brochard-known}; pulling back from
$S_i$ gives the result over $S$.
\end{proof}

\begin{corollary}[Brochard Duality]\label{cor:brochard}
Under the hypotheses of \cref{thm:global-main}, the evaluation morphism
\[
  e_{\cG}\colon\cG\longrightarrow\DoubleDual{\cG}
\]
is an isomorphism.  Thus Brochard's conjecture holds.
\end{corollary}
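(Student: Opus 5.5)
The introduction records Brochard's observation \cite[Remark~4.14]{BrochardDuality} that assertion~(ii) of \cref{thm:intro-brochard} follows from assertion~(i).  Since \cref{thm:global-main} establishes~(i), the plan is to invoke that remark.  I will also sketch the reduction explicitly, so the argument does not depend only on the citation.

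First, $\Dual{\cG}$ is again a proper, flat, finitely presented commutative group stack with finite diagonal, by \cref{thm:global-main}.  Its inertia is $\HHm(\Dual{\cG})=\CartierDual{P}$, where $P=\HHz(\cG)$.  This inertia is finite by \cite[Theorem~3.15]{BrochardDuality}, but it need not be flat: \cref{cor:thickness-stackiness} shows this failure already for a singular quasiabelian $P$.  So I cannot simply apply \cref{thm:global-main} to $\Dual{\cG}$ again.  I will need only algebraicity and finite presentation of $\DoubleDual{\cG}$, and these come from Brochard's representability result for proper flat inputs, \cref{thm:brochard-known}(i).

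Next, I check $e_{\cG}$ on geometric fibres.  Duality commutes with base change, so the fibre of $e_{\cG}$ at $\bar s$ is $e_{\cG_{\bar s}}$.  Over an algebraically closed field, Brochard proves dualizability of proper flat group stacks with finite inertia \cite[Theorem~4.13]{BrochardDuality}; his hypotheses there (Breen vanishing) are satisfied because $2$ is invertible.  Hence $e_{\cG}$ is an isomorphism on every geometric fibre.

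Finally, I globalize as in the proof of \cref{thm:global-group-duality}.  By \cref{prop:global-flatness}, $\cG$ is flat and finitely presented and $\DoubleDual{\cG}$ is locally of finite type.  I will then apply the fibrewise isomorphism criterion \cite[Lemma~3.20]{BrochardDuality} to $e_{\cG}$.  This step is where I expect the main obstacle.  The criterion is stated for group stacks, and I must check that a fibrewise isomorphism of group stacks is an isomorphism even though the inertia of $\DoubleDual{\cG}$ is not obviously flat.  I would handle this through the induced maps on $\HHm$ and $\HHz$.  On $\HHm$, the map is $H\to\CartierDual{\CartierDual{H}}$, where $H=\HHm(\cG)$ is finite flat, so it is an isomorphism by Cartier duality.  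On $\HHz$, I would use the exact sequence \eqref{eq:hzero-dual} twice, together with the vanishing $\ExtSheaf{2}{P}=0$ from the Raynaud-type d\'evissage and \cref{prop:ext-vanishing}, to compare the map with $e_P$.  The map $e_P$ is an isomorphism by \cref{thm:global-group-duality}.  If this comparison proves too delicate, I would fall back directly on the citation \cite[Remark~4.14]{BrochardDuality}.
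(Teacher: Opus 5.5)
Your proposal is correct and follows the paper's proof. Both deduce (ii) from (i), now supplied by \cref{thm:global-main}, via \cite[Remark~4.14]{BrochardDuality}, and your sketch has the same shape as the paper's: representability of $\DoubleDual{\cG}$ from the proper flat dual, Brochard's result over algebraically closed fields, and globalization through \cite[Lemma~3.20]{BrochardDuality}.

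A few differences in detail:
\begin{itemize}
  \item The paper cites \cite[Theorem~4.11]{BrochardDuality} for the field case, where you cite Theorem~4.13.
  \item For the globalization, the paper pairs Lemma~3.20 with \cite[Proposition~4.5]{BrochardDuality} to globalize the fibrewise isomorphisms on $\HHm$ and $\HHz$. That combination settles your concern about the non-flat inertia of $\DoubleDual{\cG}$, so your global $\HHz$-comparison is unnecessary. That comparison would also need a global vanishing of $\ExtSheaf{2}{P}$, which the paper proves only over discrete valuation rings and Artinian local rings.
  \item Flatness of $\cG$ is a hypothesis, not a consequence of \cref{prop:global-flatness}.
\end{itemize}
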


\begin{proof}
By \cref{thm:global-main}, part~(i) of
\cref{thm:intro-brochard} holds.  Brochard proves that this implies
dualizability \cite[Remark~4.14]{BrochardDuality}.  Briefly, the same
representability theorem applies to the bidual.  Over every algebraically
closed field the evaluation morphism is an isomorphism by
\cite[Theorem~4.11]{BrochardDuality}.  The fibrewise isomorphisms on
$\HHm$ and $\HHz$, together with flatness and finite presentation, globalize
by \cite[Lemma~3.20 and Proposition~4.5]{BrochardDuality}.  Hence
$e_{\cG}$ is an isomorphism.
\end{proof}

\begin{proof}[Proof of \cref{thm:intro-brochard}]
The first assertion is \cref{thm:global-main}, and the second is
\cref{cor:brochard}.
\end{proof}

\section{Further applications of the structure theorem}\label{sec:applications}

\subsection{Fourier--Mukai duality}

The starting point for this theory is \cite{MukaiDuality}.  Laumon
extended this result to generalized $1$-motives \cite{LaumonFourier}.
Donagi--Pantev and Arinkin treated gerbes and group stacks, including the
Fourier--Mukai property for ``good'' commutative group stacks
\cite[Section~4 and Appendix~A]{DonagiPantev}.  Laumon's generalized
$1$-motive theory is developed over a field of characteristic zero, and the
gerby Fourier--Mukai equivalences proved by Donagi--Pantev are over the
complex numbers.  Thus the equivalence theorems just cited from Laumon and
Donagi--Pantev are characteristic-zero results.  Polishchuk introduced kernel
algebras and proved unbounded quasi-coherent and bounded coherent equivalences
for finite-flat generalized $1$-motives
\cite[Theorem~4.4.2]{PolishchukKernel}.  Chen--Zhu proved the corresponding
bounded quasi-coherent equivalence for Beilinson $1$-motives over a
noetherian base \cite[Theorem~A.4.6]{ChenZhu}.  More recently,
\cite{DhillonNasserdenMukai} established stable $\infty$-categorical Mukai duality for
tame abelian stacks.

Chen--Zhu's theorem already has no characteristic restriction, but has strong
geometric restrictions.  By
\cite[Definition~A.4.1 and Lemma~A.4.4]{ChenZhu}, a Beilinson $1$-motive is
fpqc-locally of the form
\[
  A\times\BG{G}\times\Gamma,
\]
where $A$ is an abelian scheme, $G$ is of multiplicative type, and $\Gamma$
is a locally constant group with finitely generated fibres.  If such a
Picard stack is represented by a proper finitely presented group scheme,
then triviality of its inertia forces $G$ to be trivial and quasi-compactness
forces $\Gamma$ to be finite.  It is therefore fpqc-locally a finite disjoint
union of abelian schemes, and in particular is smooth.

The proper flat group schemes treated here need not be smooth or normal.
Theorem~\ref{thm:group-devissage} instead gives every proper flat finitely
presented commutative group algebraic space over a discrete valuation ring an
exact sequence
\[
  0\longrightarrow E\longrightarrow P\longrightarrow B\longrightarrow0
\]
with $E$ an arbitrary finite flat group scheme and $B$ an abelian scheme.
In positive residue characteristic this includes singular quasiabelian models
with nonreduced special fibre and finite flat group schemes with connected
special fibre.  These nonsmooth group schemes are not Beilinson $1$-motives.
Moreover,
\cref{lem:brochard-quotient} identifies the actual Picard dual as
$\Dual P\simeq[B^\vee/E^{\mathrm D}]$, a quotient which may have
non-linearly reductive inertia.  The new input relative to Chen--Zhu is thus
the structure theorem which brings these additional, potentially non-tame
objects within Polishchuk's finite-flat kernel formalism.  It also yields the
unbounded quasi-coherent and bounded coherent equivalences below, whereas
\cite[Theorem~A.4.6]{ChenZhu} states a bounded quasi-coherent equivalence.

For a possibly singular quasiabelian model,
\cref{thm:dual-quasiabelian} describes the dual as the quotient of the abelian
normalization by a finite-flat translation action.
For an abelian stack $\cA$, its dual $G=\Dual{\cA}$ is a duabelian group
scheme and biduality identifies $\cA$ with $\Dual G$.  Applying
\cref{thm:fourier-mukai} to $G$ therefore gives an analogue of
\cite{DhillonNasserdenMukai} without tameness for the derived categories of
the abelian categories of sheaves.

We retain the notation $\DQcoh{X}$ and $\DbCoh{X}$ introduced in the
introduction.  In particular, the first notation means the unbounded derived
category of the abelian category $\operatorname{Qcoh}(X)$.

\begin{theorem}\label{thm:fourier-mukai}
Let $R$ be a discrete valuation ring and let $P$ be a proper flat finitely
presented commutative $R$-group algebraic space.  There are exact equivalences
\begin{equation}\label{eq:fourier-mukai}
  \DQcoh{P}\simeq\DQcoh{\Dual P},
  \qquad
  \DbCoh{P}\simeq\DbCoh{\Dual P}.
\end{equation}
These are the generalized Fourier--Mukai transforms determined by the
Poincar\'{e} kernel data.

More precisely, choose the sequence from \cref{thm:group-devissage},
\[
  0\longrightarrow E\longrightarrow P\longrightarrow B\longrightarrow0,
\]
and let $\delta:E^{\mathrm D}\to B^\vee$ be its connecting homomorphism.
Under the identification
\[
  \Dual P\simeq[B^\vee/E^{\mathrm D}],
\]
the bounded coherent equivalence takes the form
\begin{equation}\label{eq:fourier-mukai-equivariant}
  \DbCoh{P}
  \simeq \DbCoh{[B^\vee/E^{\mathrm D}]}
  \simeq
  \mathbf D^{\mathrm b}
  \bigl(\operatorname{Coh}^{E^{\mathrm D}}(B^\vee)\bigr),
\end{equation}
where $E^{\mathrm D}$ acts by translation through $\delta$.  Note that no assumption on
the invertibility of
$2$ or on linear reductivity of $E$ or $E^{\mathrm D}$ is required.
\end{theorem}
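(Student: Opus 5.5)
The plan is to place $P$ inside Polishchuk's framework of finite-flat generalized $1$-motives \cite[Theorem~4.4.2]{PolishchukKernel} and to read off both equivalences from his kernel-algebra theorem. By \cref{lem:projective-cm}, $P$ is a projective group scheme. \Cref{thm:group-devissage} then supplies $0\to E\to P\to B\to0$ with $E$ finite flat and $B$ an abelian scheme. In Deligne's language, $P$ corresponds to the two-term complex that this extension determines, namely the quotient of the abelian scheme side by a finite flat group. The dual $\Dual P\simeq[B^\vee/E^{\mathrm D}]$ from \cref{lem:brochard-quotient} corresponds to the complex $[E^{\mathrm D}\xrightarrow{\delta}B^\vee]$. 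Both sides are therefore generalized $1$-motives whose discrete or stacky part is finite flat. The first check is that Polishchuk's hypotheses only ask for a finite flat group acting on, or extending, an abelian scheme. They place no linear reductivity requirement on it, and his duality of such $1$-motives should agree with the Picard dual $\Dual P$. I will verify the latter by comparing the cohomology sheaves: $\HHm=\Ker\delta$ and $\HHz=\Coker\delta$, matched against \eqref{eq:hminus-dual} and \eqref{eq:hzero-dual}, using $\ExtSheaf{1}{E}=0$ and $\ExtSheaf{1}{B}=B^\vee$. This check needs no Ext$^2$ input, which is why no hypothesis on $2$ enters.

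The mechanism runs as follows. Write $P=[(B'\times E)/?]$, or more directly note that $P\to B$ is an $E$-torsor. Quasi-coherent sheaves on $P$ are then modules over the finite flat $\cO_B$-algebra $\pi_*\cO_P$, which is a twisted form of $\cO_B\otimes\cO_E$. The Mukai transform $\Phi_B\colon\DQcoh{B}\to\DQcoh{B^\vee}$ is an equivalence over any base. Polishchuk shows that it carries this kernel algebra to the algebra describing $E^{\mathrm D}$-equivariance on $B^\vee$ through $\delta$, that is, to the monad on $\DQcoh{B^\vee}$ given by pushforward and pullback along the action of $E^{\mathrm D}$. On the other side, $\DQcoh{[B^\vee/E^{\mathrm D}]}$ is the category of modules over that monad, because the cover $B^\vee\to[B^\vee/E^{\mathrm D}]$ is finite faithfully flat and affine. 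Hence the equivalence of module categories over corresponding kernel algebras yields the unbounded equivalence. The kernel is the Poincaré object on $P\times\Dual P$, obtained by descending the Poincaré bundle of $B\times B^\vee$.

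Two steps remain. The first is compatibility with the unbounded derived category of the abelian category $\operatorname{Qcoh}$ on the stack $[B^\vee/E^{\mathrm D}]$. Here I would use that the quotient map is affine and flat, so $\DQcoh{[B^\vee/E^{\mathrm D}]}$ is monadic over $\DQcoh{B^\vee}$ via an exact monad. The second is passing to $\mathbf D^{\mathrm b}(\operatorname{Coh})$. Since $\Phi_B$ preserves bounded coherent complexes in both directions ($B/R$ is smooth proper, and $R$ is regular of dimension one), coherence and boundedness can be tested after forgetting along the finite flat covers $P\to B$ and $B^\vee\to[B^\vee/E^{\mathrm D}]$. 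The identification $\DbCoh{[B^\vee/E^{\mathrm D}]}\simeq\mathbf D^{\mathrm b}(\operatorname{Coh}^{E^{\mathrm D}}(B^\vee))$ is the definition of equivariant coherent sheaves on a quotient stack.

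I expect the main obstacle to be precisely the identification of Polishchuk's dual $1$-motive and kernel with $\Dual P$. The intertwining of the kernel algebras is also delicate in residue characteristic $2$, where $E$ may be non-linearly-reductive. My first approach is to work entirely with the algebra objects $\pi_*\cO_P$ over $B$ and the action algebra over $B^\vee$, and to compute $\Phi_B$ of the translation algebras directly. This relies on the fact that $\Phi_B$ exchanges translation by $\delta(\chi)$ with tensoring by the line bundle attached to $\chi$ through the connecting map. It avoids any higher-Ext vanishing for $E$.
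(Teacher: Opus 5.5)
Your proposal is correct and follows essentially the same route as the paper. You regard $P$ as the finite-flat $1$-motive $[0\to P]$ with explicit partner $[E^{\mathrm D}\xrightarrow{\delta}B^\vee]$, identify the kernel algebra attached to $\pi_*\cO_P$ with Polishchuk's twisted action algebra via Cartier duality, let the Mukai transform of $B$ carry it to the untwisted $E^{\mathrm D}$-action algebra on $B^\vee$ (with no linear-reductivity or higher-Ext input), and conclude with Polishchuk's Theorem~2.5.1 and \cref{lem:brochard-quotient}.

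There are only small differences. The identification with $\Dual P$ comes from \cref{lem:brochard-quotient} itself; matching $\HHm$ and $\HHz$ alone would not identify two Picard stacks. The paper also obtains the bounded coherent statement from part~(iii) of Polishchuk's theorem rather than by testing coherence after forgetting along the finite flat covers, though your variant also works.
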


Before giving the proof we will recall the pertinent parts of Polishchuk's theory, see \cite{PolishchukKernel}.

\medskip
\noindent\textit{Polishchuk's kernel-algebra formalism.}
  Let $S$
be noetherian and semi-separated (affine diagonal), and let $X,Y,Z$ be noetherian, flat,
semi-separated $S$-schemes.  For kernels $\cF$ on $X\times_S Y$ and $\cG$
on $Y\times_S Z$, their convolution is
\[
  \cF\circ_Y\cG
  :=\mathbf Rp_{13*}\bigl(
      \mathbf Lp_{12}^{*}\cF\otimes^{\mathbf L}
      \mathbf Lp_{23}^{*}\cG\bigr).
\]
A complex $\cK$ of quasi-coherent sheaves on $X\times_SY$ is
\emph{right convolution-flat over $Y$} if, for every
$\cH\in\operatorname{Qcoh}(Y)$, the complex
\[
  \cK\circ_Y\cH
  :=\mathbf Rp_{X*}\bigl(
      \cK\otimes^{\mathbf L}\mathbf Lp_Y^*\cH\bigr)
\]
is concentrated in degree zero, where $p_X$ and $p_Y$ are the projections
from $X\times_SY$.  It is \emph{left convolution-flat over $X$} if, for
every $\cH\in\operatorname{Qcoh}(X)$, the complex
\[
  \cH\circ_X\cK
  :=\mathbf Rp_{Y*}\bigl(
      \mathbf Lp_X^*\cH\otimes^{\mathbf L}\cK\bigr)
\]
is concentrated in degree zero.  We simply say \emph{convolution-flat over
the indicated factor} when the side is clear.

A kernel algebra on $X$ is an algebra object for this convolution: a kernel
$\cA$ on $X\times_SX$ with an associative multiplication
$\cA\circ_X\cA\to\cA$ and a two-sided unit
$\Delta_*\cO_X\to\cA$, where $\Delta_*\cO_X$ is the identity
kernel.  It is \emph{pure} if it is represented by a quasi-coherent sheaf and
is convolution-flat over both factors.  Equivalently, for every
$\cF\in\operatorname{Qcoh}(X)$, both $\cA\circ_X\cF$ and
$\cF\circ_X\cA$ are concentrated in degree zero.  A left module is a quasi-coherent
sheaf $\cF$ on $X$ with an associative unital action
$\cA\circ_X\cF\to\cF$.  A pure kernel algebra is \emph{finite} if it is
coherent and its support is proper over the first factor; its coherent
modules are then the modules whose underlying sheaf is coherent
\cite[Section~2.2]{PolishchukKernel}.

\noindent\textit{Definition.}
Let $\cA$ be a pure kernel algebra on $X$.  If $\cM$ is a left
$\cA$-module which is convolution-flat over $X$, and $\cN$ is a right
$\cA$-module, their balanced convolution is
\[
  \cN\circ_{\cA}\cM
  :=\operatorname{coeq}\bigl(
    \cN\circ_X\cA\circ_X\cM
    \rightrightarrows \cN\circ_X\cM\bigr),
\]
where the two arrows are induced by the two module actions.  A left
$\cA$-module $\cM$ is \emph{convolution-flat over $\cA$} if it is
convolution-flat over $X$ and the functor
$\cN\mapsto\cN\circ_{\cA}\cM$ on right $\cA$-modules is
exact.  If $Y$ is flat over $S$, a kernel $\cF$ on $X\times_SY$ with a left
$\cA$-action is \emph{convolution-flattening over $\cA$} if it is
convolution-flat over $Y$ and, for every flat quasi-coherent sheaf
$\cV$ on $Y$, the left $\cA$-module $\cF\circ_Y\cV$ is
convolution-flat over $\cA$.

A pure kernel algebra $\cA$ is \emph{left admissible} if there is a chain-map
quasi-isomorphism $\cA\to\cM^\bullet$ of complexes of $\cA$-bimodules
such that $\cM^\bullet$ is bounded and every $\cM^n$, after
forgetting its right $\cA$-action, is convolution-flattening over $\cA$ (with
$Y=X$).  If $\sigma:X\times_SX\to X\times_SX$ interchanges the factors, the
opposite kernel algebra is $\cA^{\mathrm{opp}}=\sigma_*\cA$, with multiplication
induced by reversing convolution.  The algebra $\cA$ is \emph{right
admissible} if $\cA^{\mathrm{opp}}$ is left admissible, and it is
\emph{admissible} if it is both left and right admissible
\cite[Section~2.4]{PolishchukKernel}.  Every pure kernel algebra of affine
type is admissible \cite[Lemma~2.4.7]{PolishchukKernel}; the finite-action
algebras below are pure and of affine type
\cite[Proposition~3.6.2]{PolishchukKernel}.

Here is the basic source of such algebras.  Let a finite flat group scheme
$G$ act on $X$, and write
\[
  \gamma:G\times_SX\longrightarrow X\times_SX,
  \qquad (g,x)\longmapsto(gx,x).
\]
If $p_G:G\times_SX\to G$ is the projection and $\omega_{G/S}$ is the
relative dualizing sheaf, the action kernel algebra is
\[
  \cA_X^G=\gamma_*p_G^*\omega_{G/S}.
\]
Its modules are precisely $G$-equivariant quasi-coherent sheaves on $X$
\cite[Theorem~3.3.1 and Corollary~3.3.2]{PolishchukKernel}.  More generally,
a line-valued $1$-cocycle is a line bundle $\cL$ on $G\times_SX$ with a
unit and associative isomorphisms
\[
  \cL_{g_1g_2,x}\simeq
  \cL_{g_1,g_2x}\otimes\cL_{g_2,x}.
\]
It defines the twisted action algebra
\[
  \cA_X^G(\cL)
  =\gamma_*\bigl(p_G^*\omega_{G/S}\otimes\cL\bigr).
\]
Modules over this algebra are $\cL$-twisted $G$-equivariant
quasi-coherent sheaves, and the analogous assertion holds for coherent
sheaves \cite[Corollary~3.4.5]{PolishchukKernel}.  In particular, for the
trivial cocycle,
\[
  \operatorname{Qcoh}([X/G])
    \simeq \cA_X^G\text{-}\operatorname{mod},
  \qquad
  \operatorname{Coh}([X/G])
    \simeq \cA_X^G\text{-}\operatorname{mod}_{\mathrm c}.
\]
No exactness of the invariants functor, and hence no linear reductivity of
$G$, enters this description.

In Polishchuk's terminology, a finite-flat generalized $1$-motive, or an
orbi-abelian scheme, has a presentation
\[
  M=[G_0\longrightarrow Q],
  \qquad
  0\longrightarrow L\longrightarrow Q\longrightarrow A\longrightarrow0,
\]
in degrees $-1,0$, where $G_0$ and $L$ are finite flat and $A$ is an
abelian scheme.  It represents the quotient stack $[Q/G_0]$, and
\[
  \operatorname{Qcoh}(M)=\operatorname{Qcoh}^{G_0}(Q),
  \qquad
  \operatorname{Coh}(M)=\operatorname{Coh}^{G_0}(Q),
\]
where $G_0$ acts through $G_0\to Q$ by translation.  These categories depend
only on $M$, not on its presentation.  For a fixed presentation, the
explicit two-term partner used in the kernel-algebra proof is
\[
  M^\dagger=[L^{\mathrm D}\longrightarrow Q^\dagger],
  \qquad
  0\longrightarrow G_0^{\mathrm D}\longrightarrow Q^\dagger
    \longrightarrow A^\vee\longrightarrow0.
\]
The extension $Q^\dagger$ is classified by the composite $G_0\to Q\to A$.
The arrow $L^{\mathrm D}\to Q^\dagger$ lies over the homomorphism
$L^{\mathrm D}\to A^\vee$ classifying $0\to L\to Q\to A\to0$; its lift is
supplied by the compatible Poincar\'{e} trivialization
\cite[Propositions~4.3.1 and~4.3.2(iii)]{PolishchukKernel}.
When the full derived dual is two-term, $M^\dagger$ represents Polishchuk's
$\RHom(M,\Gm)[1]$.  The proof of
\cite[Theorem~4.4.2]{PolishchukKernel} represents the categories attached to
$M$ and $M^\dagger$ by finite kernel algebras on $A$ and $A^\vee$ and proves
that those algebras are Fourier--Mukai dual.

We spell out the version of these results needed here.  For an extension
\[
  0\longrightarrow E\longrightarrow P\xrightarrow{\pi}B
  \longrightarrow0,
  \qquad H=E^{\mathrm D},
\]
\cite[Lemma~3.5.3]{PolishchukKernel} identifies its class, under finite
Cartier duality, with a homomorphism $\delta:H\to B^\vee$.  If $\sP_B$ is
the normalized Poincar\'{e} bundle on $B\times_RB^\vee$, then
\[
  \cL_\delta
    =(\operatorname{pr}_B,\delta\operatorname{pr}_H)^*\sP_B
      \quad\text{on }H\times_RB
\]
is a $1$-cocycle for the trivial $H$-action on $B$.  Push the identity
kernel on $P$ down along $\pi\times\pi$:
\[
  \cA(0\to P)
    :=(\pi\times\pi)_*\Delta_{P*}\cO_P.
\]
The affine push-forward equivalence of
\cite[Lemma~2.2.6]{PolishchukKernel} identifies its modules with
$\operatorname{Qcoh}(P)$, and likewise in the coherent case.  Polishchuk's
Cartier-duality comparison gives
\[
  \cA(0\to P)\simeq\cA_B^H(\cL_\delta)
\]
\cite[Corollary~3.5.5]{PolishchukKernel}.  His
\cite[Corollary~3.7.4]{PolishchukKernel} says that classical
Fourier--Mukai transform for $B$ and $B^\vee$ carries this twisted algebra to
the untwisted algebra $\cA_{B^\vee}^H$, where $H$ acts on $B^\vee$ by
translation through $\delta$.  The modules on the second side are therefore
\[
  \operatorname{Qcoh}^{H}(B^\vee)
    \simeq\operatorname{Qcoh}([B^\vee/H]),
\]
and similarly for coherent sheaves.

Finally, if mutually inverse adjoint Fourier--Mukai kernels are each a sheaf
up to a shift and carry one admissible kernel algebra to another, they induce
mutually inverse equivalences between the unbounded derived module categories
\cite[Theorem~2.5.1(i)]{PolishchukKernel}.
When the schemes are proper, the Fourier kernels have finite Tor-dimension,
and the algebras are finite, part~(iii) of that theorem restricts the
equivalence to the bounded derived categories of coherent modules.  In particular, we use the explicit
kernel-algebra comparison in the proof of Polishchuk's Theorem~4.4.2, not an
assertion that the full complex $\RHom(P,\Gm)[1]$ is two-term in
characteristic $2$.
\medskip

\begin{proof}[Proof of \cref{thm:fourier-mukai}]
By \cref{lem:projective-cm}, $P$ is a scheme.  Put $H=E^{\mathrm D}$ and
$M=[0\to P]$, with terms in degrees $-1$ and $0$.  The explicit finite
Fourier partner is
\[
  M^\dagger=[H\xrightarrow{\delta}B^\vee].
\]
Thus
\[
  \operatorname{Qcoh}(M)\simeq\operatorname{Qcoh}(P),
  \qquad
  \operatorname{Qcoh}(M^\dagger)
    \simeq\operatorname{Qcoh}^{H}(B^\vee)
    \simeq\operatorname{Qcoh}([B^\vee/H]),
\]
and similarly for coherent sheaves.

Apply the specialization of Polishchuk's theory just recalled.  Its
hypotheses hold: $\Spec R$ is affine and hence semi-separated, $B$ and
$B^\vee$ are proper and flat, the Poincar\'{e} kernel is a line bundle and its
adjoint inverse is a shift of a line bundle, and $E$ and $H$ are finite flat.
Thus the concentration and finite-Tor-dimension hypotheses hold.
Polishchuk's \cite[Theorem~2.5.1(i), (iii)]{PolishchukKernel} therefore gives
the two equivalences with $[B^\vee/H]$.  Finally,
\cref{thm:group-devissage} identifies this quotient with the actual
commutative group-stack dual $\Dual P$.
\end{proof}

\begin{remark}
The two extremal cases recover familiar equivalences.  If $E=0$, then
$P=B$ and \cref{eq:fourier-mukai} is classical Mukai duality for $B$ and
$B^\vee$.  If $B=0$, then $P=E$ and $\Dual P=\BG{E^{\mathrm D}}$; the theorem
is the derived categorical form of finite Cartier duality.
\end{remark}

The quasiabelian case puts the dual category on the smooth normalization.

\begin{corollary}\label{cor:fourier-mukai-quasiabelian}
Let $N$ be quasiabelian over $R$, choose an ample invertible sheaf
$\cL$ on $N$, let $u:A\to N$ be its normalization, and retain the
polarization data $K$, $F$, and $h:K\to F$ of \cref{thm:pushout}.  There is an
exact equivalence
\begin{equation}\label{eq:fourier-mukai-quasiabelian}
  \DbCoh{N}
  \simeq \DbCoh{[A/F^{\mathrm D}]}
  \simeq
  \mathbf D^{\mathrm b}
  \bigl(\operatorname{Coh}^{F^{\mathrm D}}(A)\bigr).
\end{equation}
Here $F^{\mathrm D}$ acts on $A$ by translation through
\[
  F^{\mathrm D}\xrightarrow{h^{\mathrm D}}K^{\mathrm D}
  \xrightarrow{\iota_{u^*\cL}}A.
\]
Thus the bounded coherent derived category of the possibly singular and
nonnormal model $N$ is encoded by its abelian normalization together with one
finite-flat translation action.
\end{corollary}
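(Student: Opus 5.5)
The plan is to specialize \cref{thm:fourier-mukai} to the exact sequence supplied by the polarization quotient, rather than to the one produced in \cref{thm:group-devissage}. \Cref{thm:polarization-quotient} gives
\[
  0\longrightarrow F\longrightarrow N\xrightarrow{q_\cL}A^\vee\longrightarrow0,
\]
with $F$ finite flat and $A^\vee$ an abelian scheme. This is exactly the shape of sequence used in the proof of \cref{thm:fourier-mukai}. That proof only needs an extension of an abelian scheme by a finite flat group scheme, not the particular construction of \cref{thm:group-devissage}. So I will rerun it with $E=F$ and $B=A^\vee$, noting that nothing there depended on how $E\to P\to B$ was produced.

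In detail, put $H=F^{\mathrm D}$ and $M=[0\to N]$. Polishchuk's comparison gives $\cA(0\to N)\simeq\cA^H_{A^\vee}(\cL_\delta)$, where $\delta\colon H\to A^{\vee\vee}$ is the connecting map. Transporting by classical Fourier--Mukai for $A^\vee$ and $A^{\vee\vee}$ turns this into the untwisted algebra $\cA^H_{A^{\vee\vee}}$. Theorem~2.5.1(i),(iii) of \cite{PolishchukKernel} then yields
\[
  \DbCoh{N}\simeq \mathbf D^{\mathrm b}\bigl(\operatorname{Coh}^{H}(A^{\vee\vee})\bigr).
\]
Its hypotheses are the same ones checked in the proof of \cref{thm:fourier-mukai}: the base is affine, the abelian schemes are proper and flat, the Poincar\'e kernel is a line bundle, and $F$ and $H$ are finite flat. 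Using the biduality isomorphism $\iota_A$ to identify $A^{\vee\vee}$ with $A$, we get $\operatorname{Coh}^{F^{\mathrm D}}(A)\simeq\operatorname{Coh}([A/F^{\mathrm D}])$. This is the middle term of \eqref{eq:fourier-mukai-quasiabelian}, and by \cref{thm:dual-quasiabelian} it is $\Dual N$.

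The remaining task is to identify the action: after composing with $\iota_A$, the connecting homomorphism $\delta$ must equal $\iota_{\cM}\circ h^{\mathrm D}$. This was already shown in the proof of \cref{thm:dual-quasiabelian}, using the bicartesian square of \cref{thm:pushout}. By naturality of connecting maps under the pushout along $h$, the boundary of the $F$-extension, precomposed with $h^{\mathrm D}$, is the boundary of $0\to K\to A\xrightarrow{\phi_\cM}A^\vee\to0$. Dualizing the latter sequence identifies that boundary with $\iota_{\cM}$.

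The main obstacle is consistency between two identifications of the extension class with a homomorphism: Polishchuk's Cartier-duality convention in \cite[Lemma~3.5.3]{PolishchukKernel}, and Brochard's boundary map in \cref{lem:brochard-quotient}. I would check that they agree up to sign and the biduality identification. A sign difference is harmless, because translation through $-\delta$ is intertwined with translation through $\delta$ by the automorphism $[-1]$ of $A$. So the equivariant categories are equivalent either way, and the stated form of the equivalence survives.
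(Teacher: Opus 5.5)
Your proposal is correct and follows the paper's proof. The paper likewise applies \cref{thm:fourier-mukai} to the polarization quotient $0\to F\to N\xrightarrow{q_\cL}A^\vee\to0$, implicitly using, as you point out, that the argument needs only some finite-by-abelian presentation, and it reads off the quotient presentation and the action through $\iota_{u^*\cL}\circ h^{\mathrm D}$ from \cref{thm:dual-quasiabelian}; your remark that a sign discrepancy between Polishchuk's and Brochard's conventions would be absorbed by $[-1]_A$ is a harmless refinement that the paper does not spell out.
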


\begin{proof}
Apply \cref{thm:fourier-mukai} to the polarization quotient
$0\to F\to N\to A^\vee\to0$.  The quotient presentation and the displayed
action are \cref{thm:dual-quasiabelian}.
\end{proof}

More generally we have:

\begin{corollary}[Derived Mukai duality without tameness]
\label{cor:abelian-stack-fourier-mukai}
Let $\cA$ be an abelian stack over a discrete valuation ring $R$, in the sense
of Brochard, and put $G=\Dual{\cA}$.  Then
\[
  \DQcoh{\cA}\simeq\DQcoh{G},
  \qquad
  \DbCoh{\cA}\simeq\DbCoh{G}.
\]
No tameness assumption and no assumption on $2$ are required.
\end{corollary}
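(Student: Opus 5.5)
The plan is to reduce the statement to \cref{thm:fourier-mukai} applied to the group scheme $G=\Dual{\cA}$, and then transport the result back to $\cA$ through biduality.

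First, recall Brochard's description of an abelian stack $\cA$ over $R$. It is a proper flat commutative group stack with finite flat inertia, and its dual $G=\Dual{\cA}$ is a duabelian group scheme. That is, there is an exact sequence $0\to A\to G\to F\to 0$ with $A$ abelian and $F$ finite flat. In particular $G$ is a proper flat finitely presented commutative group algebraic space over $R$; over a discrete valuation ring it is a projective scheme by \cref{lem:projective-cm}. Moreover, Brochard's biduality for abelian stacks gives $e_{\cA}\colon\cA\xrightarrow{\sim}\Dual G$. I will cite Brochard for this step rather than \cref{cor:brochard}, because the latter assumes $2\in R^\times$ and the statement must not.

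Next, apply \cref{thm:fourier-mukai} to $P=G$. This gives exact equivalences $\DQcoh{G}\simeq\DQcoh{\Dual G}$ and $\DbCoh{G}\simeq\DbCoh{\Dual G}$, with no hypothesis on $2$ and no tameness hypothesis. This works because the theorem only uses the finite-by-abelian sequence of \cref{thm:group-devissage}, the quotient presentation of \cref{lem:brochard-quotient}, and Polishchuk's kernel algebras, which need no linear reductivity.

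Finally, the isomorphism of algebraic stacks $\cA\simeq\Dual G$ induces equivalences of the abelian categories $\operatorname{Qcoh}$ and $\operatorname{Coh}$, and hence of their derived categories. Composing these with the equivalences above gives the claim. The main point requiring care is the first step: I need Brochard's identification of $\Dual{\cA}$ as a proper flat group scheme, and the biduality $\cA\simeq\DoubleDual{\cA}$, to hold for abelian stacks over $R$ in every residue characteristic, including residue characteristic $2$. If Brochard's duabelian duality is stated only fibrewise, I would instead argue as follows. Apply \cref{thm:global-group-duality} to $G$ to see that $\Dual G$ is proper and flat, then compare $\cA$ and $\Dual G$ via the evaluation map fibre by fibre, using \cite[Lemma~3.20]{BrochardDuality}.
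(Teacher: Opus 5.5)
Your proposal is correct and follows the paper's proof: it likewise cites Brochard for $G=\Dual{\cA}$ being duabelian \cite[Theorem~3.17(2)]{BrochardDuality} and for the dualizability of abelian stacks \cite[Example~4.10]{BrochardDuality}, which gives $\cA\simeq\Dual G$ without any hypothesis on $2$. It then applies \cref{thm:fourier-mukai} to $G$ and reverses the equivalences, so your fibrewise fallback is not needed.
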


\begin{proof}
Brochard proves that $G$ is a duabelian group scheme
\cite[Theorem~3.17(2)]{BrochardDuality}, and that the abelian stack $\cA$ is
dualizable \cite[Example~4.10]{BrochardDuality}.  Hence the evaluation map
$\cA\to\Dual G$ is an isomorphism.  Apply \cref{thm:fourier-mukai} to $G$ and
reverse the displayed equivalences.
\end{proof}

For the derived categories of the abelian categories of sheaves,
\cref{cor:abelian-stack-fourier-mukai} gives an analogue without a tameness
assumption of
\cite{DhillonNasserdenMukai}.  That paper proves a stable
$\infty$-categorical statement in the tame case.

\subsection{Singularities.}

The structure theorem also extends \cref{cor:gorenstein} from
quasiabelian models to arbitrary proper group models.

\begin{corollary}\label{cor:general-gorenstein}
Let $R$ be a discrete valuation ring and let $P$ be a proper flat finitely
presented commutative $R$-group algebraic space.  Then $P\to\Spec R$ is
Gorenstein.  Its relative dualizing sheaf $\omega_{P/R}$ is translation
invariant and noncanonically trivial.  If $g=\dim P_\generic$, a choice of
trivialization gives a derived self-duality
\[
  \RHom_R\bigl(\mathbf R\Gamma(P,\cO_P),R\bigr)
  \simeq \mathbf R\Gamma(P,\cO_P)[g].
\]
\end{corollary}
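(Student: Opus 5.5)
The plan is to copy the proof of \cref{cor:gorenstein}, using the finite-by-abelian presentation of \cref{thm:group-devissage} in place of the polarization quotient. By \cref{lem:projective-cm}, $P$ is a projective scheme. Choose an exact sequence
\[
  0\longrightarrow E\longrightarrow P\xrightarrow{f}B\longrightarrow0
\]
with $E$ finite flat and $B$ an abelian scheme. The map $f$ is an $E$-torsor, so it is finite and faithfully flat. The isomorphism $(z,x)\mapsto(x+z,x)$ identifies $P\times_B P$ with $E\times_R P$. Hence the base change of $f$ along the fppf cover $f$ itself is the projection $E\times_R P\to P$.

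The first step is to show that $E\to\Spec R$ is Gorenstein. Since $R$ is local, $\Pic(R)=0$, and Pareigis's theorem makes the finite locally free Hopf algebra $\Gamma(E,\cO_E)$ Frobenius. Its dualizing module is therefore free of rank one, exactly as in the proof of \cref{cor:gorenstein}. Being Gorenstein is stable under base change and fppf local on the target. It follows that $f$ is Gorenstein. Since $B\to\Spec R$ is smooth, it is Gorenstein, so the composite $P\to B\to\Spec R$ is Gorenstein. It has pure relative dimension $g=\dim B_\generic=\dim P_\generic$, because $f$ is finite flat and surjective. Thus $f^!\cO_S\simeq\omega_{P/R}[g]$ with $\omega_{P/R}$ invertible.

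Translation invariance and triviality then follow by repeating the argument of \cref{cor:gorenstein} verbatim, since that argument used only that $N$ is a proper flat finitely presented Gorenstein group scheme over a local base. Base change of the relative dualizing complex along $p_2\colon P\times_S P\to P$, combined with the universal translation automorphism $T=(m,p_2)$, gives $m^*\omega_{P/R}\simeq p_1^*\omega_{P/R}$. Pulling back along $(e\circ f,\id_P)$ then gives $\omega_{P/R}\simeq f^*e^*\omega_{P/R}$. The module $e^*\omega_{P/R}$ is invertible, hence free because $R$ is local, so $\omega_{P/R}$ is trivial. Grothendieck duality for the proper map $P\to\Spec R$ then yields the self-duality of $\mathbf R\Gamma(P,\cO_P)$ with shift $[g]$.

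The only step that needs care is the first, namely that $f$ is Gorenstein for an arbitrary finite flat $E$. Here $E$ may be nonreduced and need not be connected or étale, and we must avoid quietly using smoothness of $N$-like pieces. The Frobenius property of finite Hopf algebras over a local ring handles this uniformly, with no hypothesis on $2$ or on the residue characteristic. Everything after that is formal.
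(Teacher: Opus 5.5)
Your proposal is correct and follows the paper's proof essentially step for step. The $E$-torsor $P\to B$ becomes the projection $E\times_R P\to P$ after base change along itself, Pareigis's Frobenius property makes $E/R$ Gorenstein, fppf descent together with smoothness of $B$ makes $P/R$ Gorenstein, and the universal-translation argument of \cref{cor:gorenstein} plus Grothendieck duality finishes. The only blemish is notational: in $f^!\cO_S\simeq\omega_{P/R}[g]$ and $\omega_{P/R}\simeq f^*e^*\omega_{P/R}$, the letter $f$ must denote the structure morphism $P\to\Spec R$ (which the paper calls $p$), not the torsor $P\to B$.
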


\begin{proof}
Choose $0\to E\to P\xrightarrow{f}B\to0$ as in
\cref{thm:group-devissage}.  The map $f$ is an $E$-torsor.  As in the proof of
\cref{cor:gorenstein}, a finite locally free Hopf algebra over the local ring
$R$ is Frobenius, so $E/R$ is Gorenstein.  After the faithfully flat base
change $P\to B$, the morphism $f$ becomes the projection
$E\times_RP\to P$.  Hence $f$ is finite flat Gorenstein by descent.  Since
$B/R$ is smooth, the composite $P\to\Spec R$ is Gorenstein.

The universal-translation argument in the proof of
\cref{cor:gorenstein} applies verbatim to $P$ and gives
\[
  \omega_{P/R}\simeq p^*e^*\omega_{P/R},
  \qquad p:P\longrightarrow\Spec R,
\]
where $e$ is the identity section.  The invertible $R$-module
$e^*\omega_{P/R}$ is free because $R$ is local.  A generator therefore
trivializes $\omega_{P/R}$.

Put $g=\dim P_\generic$.  Since $f:P\to B$ is finite flat and $B/R$ is
smooth of relative dimension $g$, the structure morphism
$p:P\to\Spec R$ has pure relative dimension $g$.  Hence
\[
  p^!\cO_{\Spec R}\simeq\omega_{P/R}[g]
\]
\cite[Tags~0BV8 and~0C08]{Stacks}.  With the chosen trivialization,
Grothendieck duality gives
\[
  \RHom_R\bigl(\mathbf R\Gamma(P,\cO_P),R\bigr)
  \simeq \mathbf R\Gamma(P,p^!\cO_{\Spec R})
  \simeq \mathbf R\Gamma(P,\cO_P)[g]
\]
\cite[Corollary~(4.2.2)]{LipmanDuality}.
\end{proof}

\section{Arithmetic consequences}\label{sec:arithmetic}

The local theorem glues over a one-dimensional arithmetic base.

\begin{theorem}\label{thm:dedekind}
Let $S$ be a connected Dedekind scheme with generic point $\generic$, and
let $N$ be a proper flat finitely presented group algebraic space over $S$
whose generic fibre is an abelian variety.  Then $N$ is a commutative
projective group scheme.  The global \Neron{} model $A$ of $N_\generic$ is an
abelian scheme, and the generic identity extends to a finite normalization
map
\[
  u:A\longrightarrow N.
\]
For every $S$-ample invertible sheaf $\cL$ on $N$, the polarization
construction gives an exact sequence
\[
  0\longrightarrow F_\cL\longrightarrow N
  \xrightarrow{q_\cL}A^\vee\longrightarrow0
\]
with $F_\cL$ finite locally free.
\end{theorem}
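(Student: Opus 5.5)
The plan is to reduce to the local results of \cref{sec:quasiabelian} at each closed point of $S$, and then glue using schematic density of the generic fibre and the Néron mapping property.

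\textbf{Representability, commutativity, projectivity.} Raynaud's representability theorem over a one-dimensional normal base \cite[Theorem~3.3.1]{RaynaudSpecialisation} makes $N$ a scheme. Commutativity follows exactly as in \cref{lem:auto-commutative}: $m$ and $m\circ\tau$ agree on the schematically dense generic fibre of the flat scheme $N\times_S N$, and $N$ is separated. For projectivity over the Dedekind base I would cite Raynaud's projectivity result for proper flat group schemes over a Dedekind base. If only local projectivity is available, I would proceed as follows: produce an $S$-ample sheaf by extending an ample sheaf from $N_\generic$, using that $N$ is regular in codimension one only generically. This step is the weakest point, and I would lean on the citation in \cref{lem:projective-cm} in its Dedekind form.

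\textbf{Good reduction and the map $u$.} For each closed point $s$, the local ring $\cO_{S,s}$ is a discrete valuation ring. \cref{prop:normalization} shows that $N_\generic$ has good reduction at $s$. Hence the global Néron model $A$, which is a separated smooth group scheme of finite type, is an abelian scheme: its fibres are abelian varieties at every point, and properness can be checked locally on $S$. Next, the generic identity $A_\generic\to N_\generic$ extends to a morphism $A\to N$. One route is the Néron mapping property applied to the smooth scheme $A$; more directly, the local extensions $u_s$ over each $\Spec\cO_{S,s}$ spread out to open neighbourhoods and agree on overlaps by schematic density and separatedness. This gives a homomorphism $u$. Finiteness, surjectivity and birationality are local on $S$ and were proved in \cref{prop:normalization}; with $A$ normal, the same affine argument identifies $u$ with the normalization.

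\textbf{The polarization quotient.} Given an $S$-ample $\cL$, the conditional statement \cref{prop:conditional-polarization} applies verbatim. Here $S$ is integral, $A$ is abelian, $u$ is finite surjective and an isomorphism over the dense open generic point, so it yields a finite faithfully flat homomorphism $q_\cL\colon N\to A^\vee$ with $q_\cL u=\phi_{u^*\cL}$. Its kernel $F_\cL$ is the base change of $q_\cL$ along the identity section of $A^\vee$. It is therefore finite, flat and of finite presentation, hence finite locally free. Exactness of the sequence follows because a faithfully flat finitely presented homomorphism is an fppf epimorphism.

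The main obstacle I expect is the global projectivity claim, together with spreading out the locally constructed $u$. Both are handled by the schematic density of $N_\generic$, but projectivity needs an external theorem rather than a local argument.
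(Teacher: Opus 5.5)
You reproduce the paper's argument for most of the statement: Raynaud's representability theorem, commutativity by schematic density, good reduction and the local maps $u_s$ from \cref{prop:normalization}, spreading out and gluing those maps, finiteness and the integral-closure argument, and \cref{prop:conditional-polarization} for $q_\cL$.

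The genuine gap is the assertion that $N$ is projective over $S$, which you do not prove. You do not name a projectivity theorem for nonsmooth proper flat group schemes over a Dedekind base. The reference behind \cref{lem:projective-cm} only covers a discrete valuation ring. Your fallback also fails. At a closed point $s$ where $N_{\cO_{S,s}}$ is not an abelian scheme, $N$ is not normal (\cref{cor:defect-rigidity}), hence not locally factorial. So an ample sheaf on $N_\generic$ need not extend to an invertible sheaf on $N$, let alone to one that is ample on the bad fibres. Nor does local projectivity glue: ample sheaves chosen separately over the rings $\cO_{S,s}$ have no reason to agree on overlaps.

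The missing idea is to make $N$ finite over $A$. Since $S$ is normal, $A$ is projective over $S$ by \cite[Th\'{e}or\`{e}me~XI.1.4]{RaynaudAmple}. First spread the isomorphism $u_\generic$ out to a dense open $U\subset S$. The generic point itself is not open in general, so you also need this step before invoking \cref{prop:conditional-polarization}. Let $U$ have complement $\{s_1,\ldots,s_r\}$.

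Over each $\cO_{S,s_i}$ you need a homomorphism $v_i$ to $A$ with $u\circ v_i=[d_i]_N$. The paper takes this from Anantharaman's results on the group smoothening \cite{Anantharaman}. You can also get it from \cref{thm:polarization-quotient}. Choose an ample $\cL_i$ on $N_{\cO_{S,s_i}}$, which exists by \cref{lem:projective-cm}, and let $n_i$ be the rank of $K_i=\Ker(\phi_{u^*\cL_i})$. Since $K_i$ is killed by its rank, $[n_i]_A=\psi_i\circ\phi_{u^*\cL_i}$ for some $\psi_i\colon A^\vee\to A$. Then $v_i=\psi_i\circ q_{\cL_i}$ works by schematic density.

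Next pass to a common $d$, spread the $v_i$ out to open neighbourhoods of the $s_i$, and glue them with $[d]_A\circ u_U^{-1}$ over $U$. All pieces restrict to $[d]$ on the generic fibre, so they agree on overlaps by schematic density. The resulting $v\colon N\to A$ satisfies $v\circ u=[d]_A$. Because $u$ is surjective, $v$ is quasi-finite; it is also proper, hence finite. A scheme finite over the projective $S$-scheme $A$ is projective over the Noetherian scheme $S$.

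Separately, the \Neron{} mapping property concerns morphisms into $A$, so it cannot extend $A_\generic\to N_\generic$. Your other route, spreading out the local extensions, is the correct one.
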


\begin{proof}
Raynaud's representability theorem shows that $N$ is a scheme
\cite[Theorem~3.3.1]{RaynaudSpecialisation}.  Commutativity follows from
schematic density of the generic fibre, as in
\cref{lem:auto-commutative}.

The global \Neron{} model $A$ exists by
\cite[Chapter~1, \S4, Theorem~3]{BLR}.  For every closed point $s\in S$, its
base change to $\Spec\cO_{S,s}$ is the local \Neron{} model.  Applying
\cref{prop:normalization} there shows that this base change is an abelian
scheme.  Properness and geometric connectedness are local on $S$, so $A$ is
an abelian scheme.

The local theorem also gives a homomorphism
\[
  u_s:A_{\cO_{S,s}}\longrightarrow N_{\cO_{S,s}}
\]
extending the generic identity.  Since both schemes are finitely presented,
the limit theorem for morphisms descends $u_s$ to some open neighbourhood of
$s$ \cite[Tag~01ZB]{Stacks}.  Any two such extensions agree: the source is
flat, so its generic fibre is schematically dense, and the target is
separated.  The local maps
therefore glue with the generic identity to $u:A\to N$.

The map $u$ is proper because $A/S$ is proper and $N/S$ is separated.  Its
geometric fibres are finite by the local theorem, so it is quasi-finite and
hence finite.  It is surjective and birational, again locally on $S$.
Finally, the affine integral-closure argument in the proof of
\cref{prop:normalization} identifies $u$ with the normalization.

It remains to prove projectivity.  There is a dense open subscheme $U\subset S$
over which $u$ is an isomorphism.  Its complement is a finite set of closed
points $\{s_1,\ldots,s_r\}$.  The proof of \cref{prop:normalization} identifies
the base change of $u$ to $\Spec\cO_{S,s_i}$ with the group smoothening
(lissification) of $N_{\cO_{S,s_i}}$.  By
\cite[Chapter~II, Corollary~2.2.10 and its proof]{Anantharaman}, there exist an
integer $d_i>0$ and a homomorphism
\[
  v_i:N_{\cO_{S,s_i}}\longrightarrow A_{\cO_{S,s_i}}
  \qquad\text{such that}\qquad
  u\circ v_i=[d_i]_N.
\]
Put $d=\prod_i d_i$, with $d=1$ if $r=0$, and replace $v_i$ by
$[d/d_i]_A\circ v_i$.  By the limit theorem \cite[Tag~01ZB]{Stacks}, after shrinking to an open
neighbourhood $V_i$ of $s_i$, these maps and identities spread to homomorphisms
\[
  v_i:N_{V_i}\longrightarrow A_{V_i},
  \qquad u\circ v_i=[d]_N.
\]
Over $U$, put
$v_U=[d]_A\circ u_U^{-1}$.  All these maps restrict to $[d]$ on the generic fibre.
Since $N$ is flat over the integral scheme $S$, its generic fibre is
schematically dense over every nonempty overlap, and separatedness of $A$
shows that the maps agree.  They therefore glue to a homomorphism
\[
  v:N\longrightarrow A,
  \qquad u\circ v=[d]_N.
\]

The generic fibre $v_\generic=[d]_{A_\generic}$ has finite, hence affine,
kernel.  The homomorphism $v$ is separated because $N$ is separated over
$S$, so
\cite[Chapter~II, Proposition~2.3.2]{Anantharaman} shows that it is affine.  It
is also proper because $N$ is proper and $A$ is separated over $S$
\cite[Tag~01W6]{Stacks}; hence it is finite \cite[Tag~01WN]{Stacks}.  Finally,
the normal scheme $S$ is geometrically unibranch \cite[Tag~0BQ3]{Stacks}, so
the abelian scheme $A$ is projective by
\cite[Th\'{e}or\`{e}me~XI.1.4]{RaynaudAmple}.  Finite morphisms are projective,
and projective morphisms compose over the noetherian scheme $S$
\cite[Tags~0B3I and~0C4P]{Stacks}; thus $N$ is projective over $S$.

The last assertion is \cref{prop:conditional-polarization}.
\end{proof}

\begin{corollary}\label{cor:good-reduction}
An abelian variety over the function field of a connected Dedekind
scheme has good reduction everywhere if and only if it admits a proper flat
finitely presented group model.  In particular, over a discrete valuation
ring, the existence of such a model is equivalent to good reduction.
\end{corollary}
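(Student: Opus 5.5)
The two implications will be handled separately, with essentially all of the work taken from \cref{thm:dedekind} and \cref{prop:normalization}. Let $S$ be the connected Dedekind scheme, $\generic$ its generic point, and $X$ an abelian variety over $\kappa(\generic)$.

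\emph{Good reduction implies existence of a model.} Suppose $X$ has good reduction at every closed point. Then its global \Neron{} model $A$, which exists by \cite[Chapter~1, \S4, Theorem~3]{BLR}, is an abelian scheme. The plan is to check this locally: for each closed point $s$, the base change of $A$ to $\Spec\cO_{S,s}$ is the local \Neron{} model, and good reduction at $s$ makes it an abelian scheme. Properness and geometric connectedness of the fibres are local on $S$, so $A\to S$ is an abelian scheme. In particular $A$ is a proper, flat, finitely presented group model of $X$.

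\emph{Existence of a model implies good reduction.} Suppose $N$ is a proper flat finitely presented group algebraic space over $S$ with $N_\generic\simeq X$. By \cref{thm:dedekind}, the global \Neron{} model of $X$ is an abelian scheme. Its base change to each local ring $\cO_{S,s}$ is the local \Neron{} model, so $X$ has good reduction at every closed point. Alternatively, one can apply \cref{prop:normalization}(i) directly to $N_{\cO_{S,s}}$, which is quasiabelian over the discrete valuation ring $\cO_{S,s}$. This second route only needs that $N_{\cO_{S,s}}$ is a scheme, and that is supplied by \cref{lem:projective-cm}.

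\emph{The discrete valuation ring case.} This is the special case $S=\Spec R$, which is a connected Dedekind scheme with a single closed point.

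\emph{Expected obstacle.} Nothing here is genuinely difficult. The one substantive input is the inertia and \Neron--Ogg--Shafarevich argument in \cref{prop:normalization}: the finite étale $\ell$-power torsion of $N$ forces the inertia group to act trivially on $T_\ell$. That argument is already proved in the paper. The only remaining point is the routine compatibility of global and local \Neron{} models under localization, which I would cite from \cite{BLR}.
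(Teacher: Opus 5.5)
Your proposal is correct and follows the paper's proof: the forward direction is that an abelian scheme, here the global \Neron{} model, is itself a proper flat finitely presented group model, and the converse is \cref{thm:dedekind}, or \cref{prop:normalization} applied over each local ring $\cO_{S,s}$. Your local check that the global \Neron{} model is an abelian scheme, and your use of \cref{lem:projective-cm} to see that $N_{\cO_{S,s}}$ is a scheme, spell out steps the paper leaves implicit.
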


\begin{proof}
An abelian scheme is itself such a model.  Conversely, apply
\cref{thm:dedekind}, or \cref{prop:normalization} over a single discrete
valuation ring.  Thus a nonnormal proper group model cannot conceal bad
reduction.
\end{proof}

Finally we record a rigidity result that ensures all quasiabelian schemes 
over a DVR of small absolute ramification index are smooth.

\begin{theorem}\label{thm:low-ramification}
Let $R$ be a mixed-characteristic $(0,p)$ discrete valuation ring with
absolute ramification index $e_R=v_R(p)$.  If $e_R<p-1$, every quasiabelian
$R$-group scheme is an abelian scheme.
\end{theorem}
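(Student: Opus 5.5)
The plan is to show that $h\colon K\to F$ is an isomorphism for one ample $\cL$ on $N$; by \cref{cor:defect-rigidity} this forces $N$ to be an abelian scheme. Fix an $R$-ample $\cL$ on $N$ and use \cref{thm:pushout}: $h$ is a homomorphism of finite flat commutative $R$-group schemes with $h_\generic$ an isomorphism. By Raynaud's theorem on finite flat group schemes of type $(p,\dots,p)$, and more generally Fontaine's theorem, when $e_R<p-1$ a homomorphism of finite flat commutative group schemes over $R$ that is an isomorphism on generic fibres is an isomorphism. Equivalently, the functor $\mathcal G\mapsto\mathcal G_\generic$ from finite flat commutative $R$-group schemes of $p$-power order to finite commutative $\generic$-group schemes is fully faithful, and every flat model of a generic group scheme is unique.

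The first step is to reduce to $p$-power order. Split $K=K_p\times K'$ and $F=F_p\times F'$ into $p$-primary parts and prime-to-$p$ parts. This can be done by decomposing according to the action of $\mathbf Z/\operatorname{rk}$, using the idempotents of $\mathbf Z/n$ for $n=\operatorname{rk}(K)=\operatorname{rk}(F)$, so $[n]=0$ by Deligne. The prime-to-$p$ parts are finite étale. For these, $h'$ is an isomorphism by the full faithfulness of generic restriction on finite étale schemes over a normal base, as in the proof of \cref{prop:normalization}.

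For the $p$-parts, apply Raynaud's theorem~3.3.3 (Publ.\ SMF 1974), stated for $e<p-1$. It says that if $\mathcal G\to\mathcal H$ is a morphism of finite flat $R$-group schemes inducing an isomorphism on generic fibres, then it is an isomorphism. This holds because $\mathcal G$ and $\mathcal H$ are both flat models of the same generic group, and prolongations are unique in that range. If full faithfulness is available only in the form ``$\mathcal G\mapsto\mathcal G_\generic$ is fully faithful,'' the argument still goes through: there is a generic inverse $h_\generic^{-1}$, which extends to $F_p\to K_p$, and the two composites are identities generically. By schematic density they are identities.

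Then $h$ is an isomorphism, so $e(N)=1$ by \cref{cor:defect-rigidity}, and $N=A$ is an abelian scheme.

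The main obstacle is making sure the cited uniqueness-of-prolongation theorem applies to arbitrary $p$-power order, not only to groups killed by $p$. Raynaud's original statement handles general commutative finite flat $p$-groups via the Fontaine/Raynaud full faithfulness for $e<p-1$. If only the killed-by-$p$ version is available, I would induct on order. I would filter $K_p$ by $K_p[p^i]$ and compare with $F_p[p^i]$. The obstacle is that kernels of $[p]$ need not be flat, so the filtration should instead use schematic closures of the generic filtration, which are flat. Then I would use the five lemma with the killed-by-$p$ case on the graded pieces.
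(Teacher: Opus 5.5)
Your proposal is correct and follows essentially the paper's route. You split $h\colon K\to F$ via the Chinese-remainder idempotents (the paper obtains $[n]=0$ from schematic density of the generic fibre rather than from Deligne), treat the prime-to-$p$ factor by étale full faithfulness, extend the generic inverse on the $p$-primary factor, and conclude by schematic density and \cref{cor:defect-rigidity}. The only real difference is the citation for the $p$-primary step: the paper invokes Vasiu--Zink with exponent zero when $e_R<p-1$, whereas you use Raynaud's Theorem~3.3.3. That theorem already covers commutative finite flat group schemes of arbitrary $p$-power order, so your fallback dévissage is unnecessary.
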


Here $v_R:\operatorname{Frac}(R)^\times\to\mathbf Z$ is the normalized
valuation, characterized by $v_R(\pi)=1$ for a uniformizer $\pi$ of $R$.
Thus $e_R=v_R(p)$ is the unique positive integer satisfying
$pR=\mathfrak m_R^{e_R}$.

\begin{proof}
Use the pushout diagram $h:K\to F$ of \cref{thm:pushout}.  Put
\[
  n=\operatorname{rk}_R(K)=\operatorname{rk}_R(F)
  \qquad\text{and write}\qquad
  n=p^a m,\quad (m,p)=1.
\]
The generic fibres of $K$ and $F$ are finite \'{e}tale commutative group
schemes of order $n$, and hence are annihilated by $n$.  Since $K$ and $F$
are flat over $R$, their generic fibres are schematically dense.  It follows
that $[n]_K=0=[n]_F$.

Choose $c,d\in\mathbf Z$ with
\[
  cm+dp^a=1.
\]
Modulo $n$, the classes
\[
  \epsilon_p=cm,\qquad \epsilon_{p'}=dp^a
\]
are the two complementary Chinese-remainder idempotents.  For either
$G=K$ or $G=F$, set
\[
  G_p:=G[p^a]=\Ker([p^a]_G),
  \qquad
  G_{p'}:=G[m]=\Ker([m]_G).
\]
Multiplication by the two idempotents gives an isomorphism
\[
  G\xrightarrow{\ \sim\ }G_p\times_R G_{p'},
  \qquad
  x\longmapsto\bigl([\epsilon_p]x,[\epsilon_{p'}]x\bigr),
\]
whose inverse is $(x,y)\mapsto x+y$.  Thus $G_p$ and $G_{p'}$ are finite
flat direct factors of $G$.  The first has $p$-power order, whereas the
second is killed by $m$ and is therefore finite \'{e}tale, since $m$ is
invertible in $R$.  These factors do not depend on the choice of $c$ and
$d$: the classes $\epsilon_p$ and $\epsilon_{p'}$ are uniquely determined
in $\mathbf Z/n\mathbf Z$.

The homomorphism $h$ commutes with multiplication by integers, so it respects
these decompositions and can be written
\[
  h=h_p\times h_{p'},
  \qquad
  h_p:K_p\longrightarrow F_p,
  \qquad
  h_{p'}:K_{p'}\longrightarrow F_{p'}.
\]
Both maps are isomorphisms on the generic fibre.  For the prime-to-$p$
factors, the generic inverse of $h_{p'}$ extends uniquely over $R$ by full
faithfulness for finite \'{e}tale schemes over a normal integral base.  For
the $p$-primary factors, the generic inverse of $h_p$ extends over $R$ by
finite-flat full faithfulness: under $e_R<p-1$, the exponent in
\cite[Example~1 and Corollary~2]{VasiuZink} is zero.

The two extensions combine to a homomorphism $g:F\to K$ extending the
generic inverse of $h$.  The composites $gh$ and $hg$ agree with the
identity on the schematically dense generic fibres of $K$ and $F$, hence
agree with the identity everywhere.  Thus $h$ is an isomorphism, and
\cref{cor:defect-rigidity} shows that $u:A\to N$ is an isomorphism.
\end{proof}

\begin{remark}
The ramification bound is sharp for the finite-flat uniqueness input.  For
$R=\mathbf Z_p[\zeta_p]$, one has $e_R=p-1$; the generic fibres of
$(\mathbf Z/p\mathbf Z)_R$ and $\mu_p$ are isomorphic, whereas their special
fibres are respectively \'{e}tale and connected.  Thus the generic inverse
need not extend at the boundary.
\end{remark}

There is a uniform statement even beyond the full-faithfulness range.

\begin{proposition}\label{prop:uniform-exponent}
Let $R$ be a mixed-characteristic $(0,p)$ discrete valuation ring.  There is
an integer $s=s(R)\geq0$ such that, for every quasiabelian model $N$ with
normalization $u:A\to N$, there is a finite faithfully flat homomorphism
$w:N\to A$ satisfying
\[
  wu=[p^s]_A,\qquad uw=[p^s]_N.
\]
In particular, the kernel and cokernel of every pushout map $h:K\to F$ are
annihilated by an exponent depending only on $R$.
\end{proposition}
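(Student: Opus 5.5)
The strategy is to construct $w$ from a quasi-inverse of the pushout map $h:K\to F$. The exponent will come from the Vasiu--Zink bound already used in \cref{thm:low-ramification}, applied to the $p$-primary parts.

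First, choose an ample $\cL$ on $N$ and use the pushout diagram of \cref{thm:pushout}. Decompose $K=K_p\times K_{p'}$ and $F=F_p\times F_{p'}$, and correspondingly $h=h_p\times h_{p'}$, exactly as in the proof of \cref{thm:low-ramification}. On the prime-to-$p$ factors, $h_{p'}$ is an isomorphism by finite-\'etale full faithfulness. On the $p$-primary factors, $h_p$ is a generic isomorphism of finite flat $p$-group schemes over $R$. By \cite[Corollary~2]{VasiuZink}, there is an exponent $s=s(R)$, depending only on $e_R$ and $p$, such that $p^s h_p^{-1}$ extends over $R$. Combining the two factors gives $g:F\to K$ extending $[p^s]\circ h_\generic^{-1}$. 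By schematic density, $gh=[p^s]_K$ and $hg=[p^s]_F$.

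Next, define $w$ on the pushout $N\simeq (A\times_R F)/K$ by
\[
  (a,f)\longmapsto [p^s]_A(a)+g(f).
\]
Here $g(f)\in K\subset A$. This map kills the embedded copy $\{(k,-h(k))\}$, since $p^s k-gh(k)=0$. It therefore descends to a homomorphism $w:N\to A$. By construction $wu=[p^s]_A$. The composite $uw$ agrees with $[p^s]_N$ on the generic fibre, where $u$ is the identity and $g=p^s h^{-1}$. Since $N$ is flat and separated, schematic density gives $uw=[p^s]_N$ everywhere.

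For finite faithful flatness of $w$, note first that $w$ is proper, since $N$ is proper and $A$ is separated. It is also quasi-finite, because $uw=[p^s]_N$ has finite fibres: on $N$ it is multiplication by $p^s$ restricted from $A$ via the finite surjection $u$, and $[p^s]_A$ is finite. Hence $w$ is finite. It is surjective because $wu=[p^s]_A$ is surjective. For flatness, I would argue that $w$ is a finite surjective map from the Cohen--Macaulay scheme $N$ (\cref{lem:projective-cm}) to the regular scheme $A$ with equal local dimensions, so miracle flatness applies as in \cref{thm:polarization-quotient}.

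The final claim follows directly: $gh=[p^s]$ and $hg=[p^s]$ imply that $\Ker h$ and $\Coker h$ are killed by $p^s$. The main obstacle I expect is the uniform exponent in the Vasiu--Zink extension statement. One has to make sure their bound applies to arbitrary generic isomorphisms of finite flat commutative $p$-group schemes of unbounded order, with an exponent independent of the rank. If this is not directly available, I would first try reducing to truncated Barsotti--Tate pieces: embed $K_p$ into $A[p^\infty]$ and $F_p$ into $N[p^\infty]$, and use the Raynaud/Tate-type bound for $p$-divisible groups.
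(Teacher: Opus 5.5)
Your proposal is correct and matches the paper's proof essentially step for step: you use Vasiu--Zink on the $p$-primary factors and finite-\'{e}tale extension on the prime-to-$p$ factors to build a quasi-inverse $g$ with $gh=[p^s]_K$ and $hg=[p^s]_F$, descend $(a,f)\mapsto[p^s]a+g(f)$ through the pushout to get $w$, and finish with properness, quasi-finiteness and miracle flatness. Your worry about uniformity is unfounded, because the Vasiu--Zink exponent depends only on $e_R$ and $p$, not on the order of the group schemes, so the fallback through truncated Barsotti--Tate groups is not needed.
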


\begin{proof}
The boundedness theorem of Vasiu and Zink supplies $s=s(R)$ and a
quasi-inverse on the $p$-primary factors of $h$
\cite[Theorem~1 and Corollary~2]{VasiuZink}.  On the prime-to-$p$ factors,
extend the generic inverse and compose it with $[p^s]$.  Reassembling gives
$h':F\to K$ such that
\[
  h'h=[p^s]_K,\qquad hh'=[p^s]_F.
\]
The homomorphism
\[
  A\times_R F\longrightarrow A,
  \qquad(a,f)\longmapsto[p^s]a+h'(f),
\]
is invariant under $k\mapsto(k,-h(k))$ and descends through
\cref{thm:pushout} to $w:N\to A$.  The two composite identities follow
immediately.

The map $w$ is proper, and $wu=[p^s]_A$ is finite and surjective.  Since $u$
is finite and surjective, this forces $w$ to be surjective with finite
fibres, hence finite.  Its source is Cohen--Macaulay, its target is regular,
and the local dimensions agree; miracle flatness makes it finite faithfully
flat.
\end{proof}

\section{Examples and final remarks}\label{sec:examples}

\begin{example}
If $N=A$ is already an abelian scheme, then the normalization map is the
identity and $q_\cL=\phi_\cL$.  The structure theorem reduces
to the classical polarization isogeny.  In
\eqref{eq:dual-quasiabelian-quotient}, the action of $K^{\mathrm D}$ is the
dual-polarization kernel, and the quotient is the classical dual abelian
scheme $A^\vee$.
\end{example}

\begin{example}\label{ex:serre}
Serre constructed, over a ramified discrete valuation ring of mixed
characteristic, a homomorphism of abelian schemes whose schematic image $N$ is
flat and proper but is not an abelian scheme; see
\cite[\S7.5, Example~8]{BLR} and
\cite[Example~4.11]{AchterCasalainaMartinWise}.  Fakhruddin and Srinivas give
an equal-characteristic version
\cite[\S4, item~2]{FakhruddinSrinivas}.  These examples show that the stronger
claim ``every quasiabelian model is abelian'' is false.

Nevertheless, \cref{thm:polarization-quotient} supplies a finite faithfully
flat map $N\To\AbDual A$.  The normalization $A\To N$ generally has nonflat
kernel, while the polarization quotient has a different, finite flat kernel.
This distinction is exactly what permits duality to remain proper.
The defect $e(N)$ is greater than one; by
\cref{cor:thickness-stackiness}, the dual has trivial generic stabilizer and
a nontrivial, vertically supported special stabilizer.  Thus its inertia is
finite but not flat.
\end{example}

\begin{remark}
For an $R$-scheme $T$, an object of the dual
\[
  \Dual N(T)=\HomStack(N_T,\BG{\Gm})
\]
is a multiplicative invertible sheaf on $N_T$: an invertible sheaf $\cP$
equipped with a coherent isomorphism
\[
  m^*\cP\simeq p_1^*\cP\otimes p_2^*\cP.
\]
An ordinary character $\chi:N_T\to\Gm$ is different: it is a multiplicative
automorphism of the neutral object of $\Dual N(T)$.  Thus
$\operatorname{Hom}_R(N,\Gm)$ records only the $R$-points of the stabilizer
sheaf of the neutral object; it does not describe all objects and
isomorphisms in $\Dual N$ after base change.  In particular, its vanishing
does not control the relative sheaves
$\Isom^{\mathrm{mult}}(\cP,\cQ)$, which are the pullbacks of the diagonal of
$\Dual N$, and therefore does not prove that this diagonal is proper.

The quotient presentation $\Dual N\simeq[A/F^{\mathrm D}]$, represented by
the two-term complex \eqref{eq:dual-quasiabelian}, retains this information.
Fppf-locally, objects lift to points $a,a'$ of $A$, and a morphism from $a$ to
$a'$ is an element $f\in F^{\mathrm D}$ such that
$a'=a+\delta(f)$.  Since $F^{\mathrm D}$ is finite and $A$ is separated, the
sheaf of such elements is finite.  Equivalently, the diagonal of $\Dual N$ is
finite, hence proper.  This gives the required separation statement without
replacing the relative character sheaf by its global sections.
\end{remark}

\begin{remark}
The hypothesis that $2$ be invertible is needed only when we pass from group
algebraic spaces to group stacks with nontrivial inertia.  For a proper flat
finitely presented commutative group algebraic space $P$, the conclusions
\[
  \Dual P\text{ is proper and flat with finite diagonal},
  \qquad P\xrightarrow{\sim}\DoubleDual P,
\]
hold in every characteristic by \cref{thm:global-group-duality}.  Over a
discrete valuation ring, we have a 
presentation $0\to E\to P\to B\to0$ and the quotient formula
\[
  \Dual P\simeq[B^\vee/E^{\mathrm D}].
\]
This formula uses only $\ExtSheaf{1}{E}=0$.  Biduality additionally uses
fibrewise dualizability and the globalization argument in the proof of
\cref{thm:global-group-duality}; it does not require the vanishing of
$\ExtSheaf{2}{P}$.  Indeed, the truncation in
\eqref{eq:dual-truncation} does not retain this higher Ext sheaf, so the 
geometry of $\Dual P$ and biduality do not imply that
$\ExtSheaf{2}{P}=0$.

The situation changes for a commutative group stack $\cG$.  Put
\[
  H=\HHm(\cG),\qquad P=\HHz(\cG).
\]
The canonical sequence
\[
  0\longrightarrow\BG H\longrightarrow\cG\longrightarrow P
  \longrightarrow0
\]
exhibits $\cG\to P$ as an $H$-gerbe.  Restriction to the inertia gives a map
$\Dual{\cG}\to H^{\mathrm D}$.  Let $T\to S$ and
$\chi\in H^{\mathrm D}(T)$.  A global lift of $\chi$ to
$\Dual{\cG}(T)$ is a multiplicative invertible sheaf on $\cG_T$ whose
restriction to the inertia is $\chi$.  Pushing out the $H_T$-gerbe
$\cG_T\to P_T$ along $\chi$ gives a $\Gm$-gerbe with a Yoneda class
\[
  \partial_T(\chi)\in
  \operatorname{Ext}^{2}_{T,\fppf}(P_T,\Gm).
\]
The character $\chi$ has a global lift if and only if this class is zero.
After fppf sheafification, the classes $\partial_T(\chi)$ form the
homomorphism
\[
  o_{\cG}:H^{\mathrm D}\longrightarrow\ExtSheaf{2}{P},
\]
and $o_{\cG}(\chi)=0$ means that $\partial_T(\chi)$ becomes zero after an
fppf cover of $T$.  Equivalently, $\chi$ lifts fppf-locally to
$\Dual{\cG}$.  This local lifting condition, rather than surjectivity on
global $T$-objects, is what exactness of group stacks requires.
Indeed, the relevant part of the cohomology sequence is
\[
  0\longrightarrow\ExtSheaf{1}{P}
  \longrightarrow\HHz(\Dual{\cG})
  \longrightarrow H^{\mathrm D}
  \xrightarrow{o_{\cG}}\ExtSheaf{2}{P}.
\]
Thus the expected dual sequence
\[
  0\longrightarrow\Dual P\longrightarrow\Dual{\cG}
  \longrightarrow H^{\mathrm D}\longrightarrow0
\]
is exact if and only if $o_{\cG}$ is zero.

The distinction between global and local lifting matters here.  If
$P=A$ is an abelian scheme, then
\cref{eq:global-ext2-abelian} gives
\[
  \operatorname{Ext}^{2}_{T,\fppf}(A_T,\Gm)
  \simeq H^1_{\fppf}(T,A_T^\vee),
\]
which can be nonzero, whereas $\ExtSheaf{2}{A}=0$.  Sheaf-level exactness
therefore does not assert that every character over $T$ has a global lift.
If $\cG=P$ is a group algebraic space, then $H=0$, so the obstruction has
zero source even when $\ExtSheaf{2}{P}$ is nonzero.

For the finite-by-abelian presentation
\[
  0\longrightarrow E\longrightarrow P\longrightarrow B\longrightarrow0,
\]
the long Ext sequence and the characteristic-free vanishing
$\ExtSheaf{2}{B}=0$ give a monomorphism
\[
  \ExtSheaf{2}{P}\longrightarrow\ExtSheaf{2}{E}.
\]
Consequently, the abelian quotient contributes no second-Ext obstruction;
the remaining possible contribution is controlled by the finite flat kernel.
When $2$ is invertible, the finite-group vanishing in
\cref{prop:ext-vanishing} kills this term and therefore $o_{\cG}$.
In characteristic $2$ the obstruction can in fact be nonzero, and biduality
can fail.  Let $k$ be an algebraically closed field of characteristic $2$.
Breen constructs a nonzero class
\[
  \varepsilon\in
  \operatorname{Ext}^{2}_{k,\fppf}(\alpha_2,\Gm)
\]
\cite{BreenNontrivial}.  This class remains nonzero in
$\ExtSheaf{2}{\alpha_2}(k)$.  Indeed, the local-to-global spectral sequence,
together with
\[
  \ExtSheaf{1}{\alpha_2}=0,\qquad
  \mathop{\underline{\operatorname{Hom}}}\nolimits(\alpha_2,\Gm)
    \simeq\alpha_2,
\]
identifies the global and internal groups in degree $2$, since
$H^i_{\fppf}(k,\alpha_2)=0$ for $i>0$.  The latter vanishing follows from
\[
  0\longrightarrow\alpha_2\longrightarrow\mathbf G_{\mathrm a}
  \xrightarrow{F}\mathbf G_{\mathrm a}\longrightarrow0,
\]
the perfection of $k$, and the fppf acyclicity of $\mathbf G_{\mathrm a}$.

Since $[2]_{\alpha_2}=0$, multiplication by $2$ kills $\varepsilon$.
The Kummer sequence
\[
  0\longrightarrow\mu_2\longrightarrow\Gm
  \xrightarrow{[2]}\Gm\longrightarrow0
\]
therefore lifts $\varepsilon$ to
$\kappa\in\operatorname{Ext}^{2}_{k,\fppf}(\alpha_2,\mu_2)$.
Under Deligne's equivalence, $\kappa$ determines an exact sequence
\[
  0\longrightarrow\BG{\mu_2}\longrightarrow\cG_\kappa
  \longrightarrow\alpha_2\longrightarrow0.
\]
Thus $\cG_\kappa\to\alpha_2$ is a $\mu_2$-gerbe, so $\cG_\kappa$ is
algebraic, proper, flat, and finitely presented, with finite flat inertia.
For the nontrivial character $\chi:\mu_2\to\Gm$, pushout gives
\[
  o_{\cG_\kappa}(\chi)=\varepsilon\ne0.
\]
Using \eqref{eq:hminus-dual} and \eqref{eq:hzero-dual}, we obtain
\[
  \HHm(\Dual{\cG_\kappa})\simeq\alpha_2,\qquad
  \HHz(\Dual{\cG_\kappa})=
  \Ker\!\left((\mathbf Z/2\mathbf Z)_k
    \xrightarrow{\,1\mapsto\varepsilon\,}
    \ExtSheaf{2}{\alpha_2}\right).
\]
The kernel in the second formula is an algebraic subgroup of the finite
\'{e}tale constant group $(\mathbf Z/2\mathbf Z)_k$.  Over the algebraically
closed field $k$, any nontrivial such subgroup contains the nonidentity
$k$-point, but that point maps to $\varepsilon\ne0$.  Hence the kernel is
trivial.
Hence $\Dual{\cG_\kappa}\simeq\BG{\alpha_2}$ and
$\DoubleDual{\cG_\kappa}\simeq\alpha_2$.  The evaluation morphism kills
the nontrivial $\mu_2$-inertia and is therefore not an isomorphism.
Thus biduality genuinely fails in characteristic $2$, and the hypothesis
that $2$ be invertible is essential for the group-stack theorem.
\end{remark}

\begin{remark}
The analysis of Achter, Casalaina-Martin, and Wise identifies precise
conditions under which the schematic image of a homomorphism of abelian
schemes is again an abelian scheme
\cite[Theorem~A]{AchterCasalainaMartinWise}.  Their
\cite[Examples~4.11 and~4.12]{AchterCasalainaMartinWise} show that flatness
of the image is not enough.  The present theorem on finite extensions of
abelian schemes addresses a different question: it retains a useful
polarization quotient even when the image is nonnormal and hence not abelian.
\end{remark}

\section*{Statement on the use of artificial intelligence}

The authors used OpenAI's ChatGPT/Codex  to assist in developing and checking
mathematical arguments, revising the exposition, and identifying typesetting
and consistency errors.  All AI-assisted suggestions were independently
checked by the authors, who take full responsibility for the entire content of
the manuscript.


\begin{thebibliography}{AGV72--73}

\bibitem[ACMW24]{AchterCasalainaMartinWise}
J.~D. Achter, S. Casalaina-Martin, and J. Wise,
\emph{Images of abelian schemes},
preprint, \href{https://arxiv.org/abs/2312.13262v2}{arXiv:2312.13262v2},
2024.

\bibitem[Ana73]{Anantharaman}
S. Anantharaman,
\emph{Sch\'{e}mas en groupes, espaces homog\`{e}nes et espaces alg\'{e}briques
sur une base de dimension $1$},
M\'{e}m. Soc. Math. France \textbf{33} (1973), 5--79.

\bibitem[AGV72--73]{SGA4}
M. Artin, A. Grothendieck, and J.-L. Verdier (eds.),
\emph{Th\'{e}orie des topos et cohomologie \'{e}tale des sch\'{e}mas
(SGA~4)},
Lecture Notes in Math., vols.~269, 270, 305, Springer-Verlag, 1972--1973.

\bibitem[Ber09]{BertolinExtensions}
C. Bertolin,
\emph{Extensions and biextensions of locally constant group schemes, tori
and abelian schemes},
Math. Z. \textbf{261} (2009), no.~4, 845--868.

\bibitem[BLR90]{BLR}
S. Bosch, W. L\"{u}tkebohmert, and M. Raynaud,
\emph{N\'{e}ron Models},
Ergeb. Math. Grenzgeb. (3), vol.~21, Springer-Verlag, Berlin, 1990.

\bibitem[Bre69]{BreenNontrivial}
L. Breen,
\emph{On a nontrivial higher extension of representable abelian sheaves},
Bull. Amer. Math. Soc. \textbf{75} (1969), 1249--1253,
\href{https://doi.org/10.1090/S0002-9904-1969-12379-7}
{doi:10.1090/S0002-9904-1969-12379-7}.

\bibitem[Bre70]{BreenExtensions}
L. Breen,
\emph{Extensions of abelian sheaves and Eilenberg--Mac Lane algebras},
Invent. Math.~\textbf{9} (1969/70), 15--44.

\bibitem[Bre75]{BreenVanishing}
L. Breen,
\emph{Un th\'{e}or\`{e}me d'annulation pour certains
$\operatorname{Ext}^{i}$ de faisceaux ab\'{e}liens},
Ann. Sci. \'{E}cole Norm. Sup. (4) \textbf{8} (1975), 339--352.

\bibitem[Bro21]{BrochardDuality}
S. Brochard,
\emph{Duality for commutative group stacks},
Int. Math. Res. Not. IMRN \textbf{2021} (2021), no.~3, 2321--2388.

\bibitem[CZ17]{ChenZhu}
T.-H. Chen and X. Zhu,
\emph{Geometric Langlands in prime characteristic},
Compos. Math. \textbf{153} (2017), no.~2, 395--452,
\href{https://doi.org/10.1112/S0010437X16008113}
{doi:10.1112/S0010437X16008113}.

\bibitem[DG11]{SGA3I}
M. Demazure and A. Grothendieck (eds.),
\emph{Sch\'{e}mas en groupes (SGA~3), Tome~I: Propri\'{e}t\'{e}s
g\'{e}n\'{e}rales des sch\'{e}mas en groupes},
Documents Math\'{e}matiques, vol.~7, Soc. Math. France, Paris, 2011.

\bibitem[DN23]{DhillonNasserdenMukai}
A. Dhillon and B. Nasserden,
\emph{Mukai duality for abelian stacks},
\href{https://arxiv.org/abs/2311.11492}{arXiv:2311.11492}, 2023.

\bibitem[DP08]{DonagiPantev}
R. Donagi and T. Pantev,
\emph{Torus fibrations, gerbes, and duality},
Mem. Amer. Math. Soc. \textbf{193} (2008), no.~901, vi+90 pp.,
with an appendix by D. Arinkin,
\href{https://doi.org/10.1090/memo/0901}{doi:10.1090/memo/0901}.

\bibitem[FS08]{FakhruddinSrinivas}
N. Fakhruddin and V. Srinivas,
\emph{A topological property of quasireductive group schemes},
Algebra Number Theory \textbf{2} (2008), no.~2, 121--134.

\bibitem[SGA1]{SGA1}
A. Grothendieck,
\emph{Rev\^{e}tements \'{e}tales et groupe fondamental (SGA~1)},
Documents Math\'{e}matiques, vol.~3, Soc. Math. France, Paris, 2003,
recomposed and annotated by M. Raynaud.

\bibitem[SGA7 I]{SGA7}
A. Grothendieck,
\emph{Groupes de monodromie en g\'{e}om\'{e}trie alg\'{e}brique. I
(SGA~7~I)},
Lecture Notes in Math., vol.~288, Springer-Verlag, Berlin--New York, 1972,
with the collaboration of M. Raynaud and D.~S. Rim.

\bibitem[Lau96]{LaumonFourier}
G. Laumon,
\emph{Transformation de Fourier g\'{e}n\'{e}ralis\'{e}e},
preprint, \href{https://arxiv.org/abs/alg-geom/9603004}
{arXiv:alg-geom/9603004}, 1996.

\bibitem[Lip09]{LipmanDuality}
J. Lipman,
\emph{Notes on derived functors and Grothendieck duality},
in \emph{Foundations of Grothendieck Duality for Diagrams of Schemes},
Lecture Notes in Math., vol.~1960, Springer-Verlag, Berlin, 2009, 1--259.

\bibitem[Muk81]{MukaiDuality}
S. Mukai,
\emph{Duality between $D(X)$ and $D(\widehat X)$ with its application to
Picard sheaves},
Nagoya Math. J. \textbf{81} (1981), 153--175,
\href{https://doi.org/10.1017/S002776300001922X}
{doi:10.1017/S002776300001922X}.

\bibitem[Mum74]{Mumford}
D. Mumford,
\emph{Abelian Varieties},
2nd ed., Oxford University Press, London, 1974.

\bibitem[Oor66]{Oort}
F. Oort,
\emph{Commutative Group Schemes},
Lecture Notes in Math., vol.~15, Springer-Verlag, Berlin--New York, 1966.

\bibitem[Par71]{Pareigis}
B. Pareigis,
\emph{When Hopf algebras are Frobenius algebras},
J. Algebra \textbf{18} (1971), no.~4, 588--596.

\bibitem[Pol11]{PolishchukKernel}
A. Polishchuk,
\emph{Kernel algebras and generalized Fourier--Mukai transforms},
J. Noncommut. Geom. \textbf{5} (2011), no.~2, 153--251,
\href{https://doi.org/10.4171/JNCG/73}{doi:10.4171/JNCG/73}.

\bibitem[PR11]{PoonenRainsSelfCup}
B. Poonen and E. Rains,
\emph{Self cup products and the theta characteristic torsor},
Math. Res. Lett.~\textbf{18} (2011), no.~6, 1305--1318.

\bibitem[Ray70a]{RaynaudAmple}
M. Raynaud,
\emph{Faisceaux amples sur les sch\'{e}mas en groupes et les espaces
homog\`{e}nes},
Lecture Notes in Math., vol.~119, Springer-Verlag, Berlin--New York, 1970.

\bibitem[Ray70b]{RaynaudSpecialisation}
M. Raynaud,
\emph{Sp\'{e}cialisation du foncteur de Picard},
Publ. Math. Inst. Hautes \'{E}tudes Sci.~\textbf{38} (1970), 27--76.

\bibitem[RR26]{RibeiroRosengarten}
G. Ribeiro and Z. Rosengarten,
\emph{Extensions of abelian schemes and the additive group},
Algebraic Geom., to appear,
\href{https://arxiv.org/abs/2506.17393v2}{arXiv:2506.17393v2}, 2026.

\bibitem[Rom12]{RomagnyEffective}
M. Romagny,
\emph{Effective models of group schemes},
J. Algebraic Geom.~\textbf{21} (2012), no.~4, 643--682.

\bibitem[Ryd15]{RydhApproximation}
D. Rydh,
\emph{Noetherian approximation of algebraic spaces and stacks},
J. Algebra \textbf{422} (2015), 105--147.

\bibitem[SZ95]{SilverbergZarhin}
A. Silverberg and Yu.~G. Zarhin,
\emph{Semistable reduction and torsion subgroups of abelian varieties},
Ann. Inst. Fourier (Grenoble) \textbf{45} (1995), no.~2, 403--420.

\bibitem[Sta]{Stacks}
The Stacks Project Authors,
\emph{Stacks Project},
\href{https://stacks.math.columbia.edu}{stacks.math.columbia.edu}.

\bibitem[TO70]{TateOort}
J.~Tate and F.~Oort,
\emph{Group schemes of prime order},
Ann. Sci. \'{E}cole Norm. Sup. (4) \textbf{3} (1970), no.~1, 1--21.

\bibitem[VZ12]{VasiuZink}
A.~Vasiu and T.~Zink,
\emph{Boundedness results for finite flat group schemes over discrete
valuation rings of mixed characteristic},
J. Number Theory \textbf{132} (2012), no.~9, 2003--2019.

\bibitem[Wat79]{Waterhouse}
W.~C. Waterhouse,
\emph{Introduction to Affine Group Schemes},
Graduate Texts in Mathematics, vol.~66,
Springer-Verlag, New York--Berlin, 1979.

\end{thebibliography}
\end{document}